\newtheorem{theorem}{Theorem}[section]
\newtheorem{lemma}[theorem]{Lemma}
\newtheorem{proposition}[theorem]{Proposition}
\newtheorem{corollary}[theorem]{Corollary}
 \theoremstyle{definition}
\newtheorem{definition}[theorem]{Definition}
\theoremstyle{remark}
\newtheorem{remark}[theorem]{Remark}
\numberwithin{equation}{section}
\begin{document}

\title[Conformal deformation of Einstein tensor]
%{On the structure of a class of nonlinear elliptic operators and applications to a priori estimates}
%{On some structures for a class of nonlinear elliptic operators and applications to fully nonlinear PDEs}
 %{Partial uniform ellipticity for some operators and applications to fully nonlinear PDEs}
%{The partial uniform ellipticity for some operators and applications}
%{On the partial uniform ellipticity and its applications}
%{The partial uniform ellipticity for some symmetric functions and applications}
%{On partial uniform ellipticity for some operators and applications}
%{The partial uniform ellipticity,  symmetric functions, and conformal deformation of the Einstein tensor}
%{Partial uniform ellipticity,  symmetric functions, and the conformal deformation of Einstein tensor}
{On the partial uniform ellipticity and complete conformal metrics with prescribed curvature functions 
on manifolds with boundary
}
%{On the partial uniform ellipticity and prescribing  curvature equations  of Einstein tensor on  manifolds with boundary}

 \author{Rirong Yuan}
\address{School of Mathematics, South China University of Technology, Guangzhou 510641, China}
%\email{yuanrr@scut.edu.cn}
\email{yuanrr@scut.edu.cn; rirongyuan@stu.xmu.edu.cn}

\thanks{%The author is partially supported by NSFC (Grant No. 11801587).
% Research of the third named author appreciates the support and hospitality of the Department, and also wants
% to thank Graduate School of Xiamen University for sponsoring his visit to OSU through the Short-term Research Program.
%Research of the author was supported in part by NSFC grant 11801587.
%Research partially supported by NSFC grant 11801587.
 The author was supported by %the National Natural Science Foundation of China, Grant No.
  NSFC Grant 11801587.
}
\date{}

\begin{abstract}
 We consider the problem of finding complete conformal metrics with prescribed curvature functions of the Einstein tensor 
 and of more general modified Schouten tensors.
To achieve this, we  
 reveal an algebraic 
  structure of a wide class of fully nonlinear %elliptic
   equations.
Our method is appropriate and delicate
as shown by a topological obstruction.  
%The asymptotic behavior of complete conformal metrics near boundary is also formulated.
Finally, we discuss Hessian equations and Weingarten equations by confirming a key assumption.

%{\em Mathematical Subject Classification (2010):}
%{\em Mathematics Subject Classification (2010):}
%  Primary 53A30;  Secondary 35J15

%{\em Keywords:} Partial uniform ellipticity, prescribed curvature equation, Einstein tensor, fully nonlinear elliptic equation

\end{abstract}

\maketitle

%\tableofcontents

\section{Introduction}

\medskip

  In this paper, we assume that $(M,g)$ is a compact Riemannian manifold  of dimension $n\geq3$,
   with smooth boundary $\partial M$,  $\bar M=M\cup\partial M$, and with Levi-Civita connection $\nabla$. 
   %($g$ is smooth in $\bar M$).
    Let $\mathrm{Ric}_g$ and 
 $\mathrm{R}_g$ denote the Ricci and scalar curvature of $g$, respectively. 
  The modified Schouten tensor of  $(M,g)$ is simply defined as
   $$S_g^\tau=\frac{1}{n-2}(\mathrm{Ric}_g-\frac{\tau}{2(n-1)}\mathrm{R_g}g),$$
 %The Schouten tensor  corresponds to $\tau=1$.
 which includes the Schouten tensor  and Einstein tensor as  special cases.

  In conformal geometry, it is interesting to find which function could arise as a curvature function of a conformal metric.
%\vspace{1mm}
 For the scalar curvature, according to the Yamabe problem proved by combining the work  
  of Trudinger, Aubin, Schoen \cite{Aubin1976,Schoen1984,Trudinger1968},
 %with the work of Trudinger  (see  \cite{Lee-Parker1987}),
  any closed Riemannian manifold of dimension $n\geq 3$ admits a conformal metric with constant scalar curvature. A fully nonlinear version of Yamabe problem, for the Schouten tensor  $S_g^1$, on closed Riemannian manifolds was proposed in \cite{Viaclovsky2000}, since then 
 %the 
 %conformal deformation of 
 % Schouten tensor  $S_g^1$  %on compact manifolds with or without boundary
it has drawn much attention in
  \cite{Branson2006Gover,ChangGurskyYang2002,Ge2006Wang,GuanLinWang04,Guan2003Wang-CrelleJ,ABLi2003YYLi,Gursky2004Viaclovsky,Gursky2003Viaclovsky-JDG,Gursky2007Viaclovsky,ShengTrudingerWang2007,Viaclovsky2002CAG,Wang2006}.
  % (When $\partial M=\emptyset$ and $f=(\sigma_k)^{\frac{1}{k}}$, 
   The $\sigma_k$-curvature equations for %conformal deformation of 
   modified Schouten tensor $S_g^\tau$, with proper restriction to $\tau$, was studied in \cite{Gursky2003Viaclovsky,Sheng2006Zhang}.
   % for $\tau<1$ and $\lambda(S_g^\tau)\in \Gamma_k$, and by \cite{Sheng2006Zhang} for
   %$\tau>n-1$ and $\lambda(S_g^\tau)\in \Gamma_k$, or $\tau\geq n-1$ and $\lambda(-S_g^\tau)\in \Gamma_k$.
 % (We are referred to \cite{Escobar1992Ann,Escobar1992JDG,Marques2005,Brendle2014Chen,SChen2007,SChen2009,JinLiLi2007} for  Yamabe problem and its fully nonlinear version on manifolds with boundary).
  %\cite{Branson2006Gover,ChangGurskyYang2002-JAM,ChangGurskyYang2002,SChen2005,SChen2007,SChen2009,Ge2006Wang,Guan2007AJM,GuanLinWang04,Guan2003Wang,Guan2003Wang-CrelleJ,JinLiLi2007,ABLi2003YYLi,Gursky2004Viaclovsky,Gursky2007Viaclovsky,ShengTrudingerWang2007,Viaclovsky2002CAG,Wang2006}.
%\vspace{1mm}
When the manifold has boundary,  the Yamabe problem with certain boundary properties was studied by \cite{Cherrier1984,Escobar1992JDG,Escobar1992Ann}, and further complemented by \cite{Escobar1996CVPDE,Escobar1996IUMJ,Han1999Li,Han2000Li,Brendle2014Chen,Marques2005,Marques2007} (see \cite{SChen2007,SChen2009,JinLiLi2007} for a fully nonlinear version), in which the conformal metrics are not 
%necessary 
complete.

%\vspace{1mm}
However, the case is very different when the metric is complete.
%We are concerned with complete conformal metric on manifolds with boundary. 
A deep result %concerning prescribed scalar curvature
% is due to 
of Aviles-McOwen \cite{Aviles1988McOwen}  (see Theorem \ref{thm-AM}),  significantly extending a famous theorem of Loewner-Nirenberg \cite{Loewner1974Nirenberg}, states that any compact Riemannian manifold of dimension $n\geq3$ with smooth boundary admits a complete conformal  metric with negative constant scalar curvature.    
  Also, it would be reasonable to study the fully nonlinear version. %of their results. 
 For  $-\mathrm{Ric}_g$, %$-S_g^\tau$ with $\tau=0$, 
 Guan \cite{Guan2008IMRN} and Gursky-Streets-Warren  \cite{Gursky-Streets-Warren2011}  studied certain fully nonlinear prescribed curvature problem and proved the existence of  complete conformal metrics;
 %with prescribed curvature functions %on compact Riemannian manifolds with boundary
 % (when $\partial M=\emptyset$ it was studied in \cite{Gursky2003Viaclovsky}); 
  moreover,   Gursky-Streets-Warren
  obtained  asymptotic behavior for complete conformal metric with $\sigma_k(-\mathrm{Ric}_g)=1$, extending that of  %Loewner-Nirenberg 
  \cite{Loewner1974Nirenberg} from $\sigma_1$ to $\sigma_k$. 
For the modified Schouten tensor 
 %$f=(\frac{\sigma_k}{\sigma_l})^{\frac{1}{k-l}}$, $0\leq l<k\leq n$, and
 $S_g^\tau$ with $\tau>n-1$, it was also considered by  Li-Sheng  \cite{Li2011Sheng} who proved some existence results.  
% (When $\partial M=\emptyset$ and $f=(\sigma_k)^{\frac{1}{k}}$, the conformal deformation for $S_g^\tau$ was studied by \cite{Gursky2003Viaclovsky} for $\tau<1$ and $\lambda(S_g^\tau)\in \Gamma_k$, and by \cite{Sheng2006Zhang} for $\tau\geq n-1$ and $\lambda(-S_g^\tau)\in \Gamma_k$).

  %settles the case $\tau=n-1$ left open
 
% \vspace{1mm}
 In some  problems from differential geometry and PDEs, 
 it seems interesting to 
 study prescribed curvature problem for the Einstein tensor $$G_g=\mathrm{Ric}_g-\frac{1}{2}\mathrm{R}_g g,$$
which corresponds exactly to \textit{critical} case  $\tau=n-1$ from the viewpoint of PDEs.
As described %in Remark \ref{remark1-crucial} below, 
later, there is a topological obstruction to constructing complete conformal metrics with nonnegative Einstein tensor 
  on certain 3-manifolds, which indicates
% It follows that, 
 that, in this \textit{critical} case, the prescribed curvature problem  is much more complicated.
 %The prescribed curvature problem for the Einstein tensor $$G_g=\mathrm{Ric}_g-\frac{1}{2}\mathrm{R}_g g$$ is much more complicated, as described in Remark \ref{remark1-crucial} below, there is a topological obstruction to the existence of  complete conformal metrics with positive Einstein tensor on certain 3-manifolds. From the viewpoint of PDEs, the curvature equation corresponds exactly to \textit{critical} case  $\tau=n-1$.

%\vspace{1mm}
%In this paper, we are concerned with
The  purpose of this paper % is devoted to studying this problem.
is to settle the \textit{critical} case left open: %We are going to 
finding on $M$ 
 a complete metric ${g}_\infty$, that is conformal to $g$,   with prescribed curvature %function %$\psi$
  \begin{equation}
 \label{conformal-equ0}
\begin{aligned}
\,& f(\lambda_{{g}_{\infty}}(G_{{g}_{\infty}}))=\psi \mbox{ in } M,
\end{aligned}
\end{equation}
where $\lambda_{{g}_\infty}(G_{{g}_\infty})$ denote  eigenvalues of $G_{{g}_\infty}$ with respect to ${g}_\infty$. 
As in \cite{CNS3}, $f$ is a smooth symmetric function of $n$ real variables defined in an open symmetric  convex cone 
$\Gamma\subset\mathbb{R}^n$
with vertex at the origin, $\partial \Gamma\neq \emptyset$, $\Gamma_n\subseteq\Gamma\subset  \Gamma_1,$ 
%$\Gamma\neq\Gamma_1$, 
where 
$\Gamma_k$
 %$\Gamma\supseteq\Gamma_n:=\{\lambda\in \mathbb{R}^n: \mbox{ each } \lambda_i>0\}$;
 is the $k$-th G{\aa}rding's cone defined by 
 $$ \Gamma_k=\left\{\lambda\in \mathbb{R}^n: \sigma_j(\lambda)>0, \mbox{  } \forall 1\leq j\leq k \right\},$$
 where $\sigma_k=\sum_{1\leq i_1<\cdots<i_k\leq n}\lambda_{i_1}\cdots\lambda_{i_k}$.
  % is the $k$-th elementary symmetric function.
 % ($\sigma_0\equiv1$ for simplicity). 
 In addition, %in order to ensure \eqref{mainequ0} to be an elliptic and concave equation, %as in \cite{CNS3},  
 $f$  satisfies % fundamental conditions:
\begin{equation}
\label{concave}
 f \mbox{ is  concave in } \Gamma,
\end{equation}
%and $f$ is a homogeneous  function of degree one, i.e.,
\begin{equation}
\label{homogeneous-1}
\begin{aligned}
f(t\lambda)=tf(\lambda) % \mbox{  } f(\lambda)>0,
 \mbox{ for any } \lambda\in\Gamma, \mbox{  } t>0,
 \end{aligned}
 \end{equation}
\begin{equation}
\label{homogeneous-1-buchong2}
\begin{aligned}
f(\vec{\bf 1})=1, \mbox{  }
  f|_{\Gamma}>0, \mbox{ and }  f|_{\partial \Gamma}\equiv0,
% \mbox{ where } \vec{\bf 1}:=  (1,\cdots, 1)\in \mathbb{R}^n,
 \end{aligned}
 \end{equation}
where  $\vec{\bf 1}:=  (1,\cdots, 1)\in \mathbb{R}^n$. 
Conditions \eqref{concave}, \eqref{homogeneous-1} and \eqref{homogeneous-1-buchong2} yield
\begin{equation}
\label{key1-main}
\begin{aligned}
\,&   \sum_{i=1}^n \lambda_i \geq nf(\lambda),
\,& \forall \lambda\in\Gamma.
\end{aligned}
\end{equation}  
%which is important for studying asymptotic behavior of complete conformal metrics.
  
 % \vspace{1mm}
We assume that there is a  metric $\underline{g}= {e^{2\underline{u}}g}$ with $\underline{u}\in C^2(\bar M)$ such that 
 \begin{equation}
 \label{admissible-metric1}
 \begin{aligned}
 \lambda_{\underline{g}}(G_{\underline{g}})\in \Gamma \mbox{ in } \bar M.
 \end{aligned}
 \end{equation}
 We call such a metric an \textit{admissible conformal} metric for simplicity. 
  
  % \vspace{1mm}
 We prove the following existence results.
%The existence theorems can be stated as follows.
  \begin{theorem}
 \label{thm0-conformal}
  Assume  %\eqref{elliptic},
   \eqref{concave}, \eqref{homogeneous-1}, \eqref{homogeneous-1-buchong2}, 
  \eqref{admissible-metric1} hold.
 Suppose in addition that  $\Gamma\neq \Gamma_n$. %and  there exists an $C^2$-\textit{admissible conformal} metric.
Then for each smooth positive function $\psi$ on $\bar M$,  there is on $M$ a smooth admissible complete metric 
 ${g}_\infty=e^{2u_{\infty}}g$ satisfying
 \eqref{conformal-equ0}.
 
 \end{theorem}

 In particular, 
 \textit{admissible conformal} metrics   exist on certain manifolds  according to Proposition \ref{lemma5-main} and Lemma \ref{remark1-nonzero} below.  
  \begin{theorem}
\label{thm1-conformal}
%For $n\geq 3$.
 Let $(M,g)=(X\times \Omega,g)$ be a Riemannian manifold, where 
  $\Omega$ is a smooth  bounded domain in $\mathbb{R}^k$  and $X$ is a closed Riemannian manifold of dimension $n-k$
   ($1\leq k\leq n$ and $g$ is not necessary the product metric).
Given a smooth positive function 
 $\psi$ on $\bar M$,
there exists a smooth admissible complete conformal Riemannian metric 
 satisfying \eqref{conformal-equ0},  
provided $\Gamma\neq \Gamma_n$ and $f$ satisfies %\eqref{elliptic},
 \eqref{concave}, \eqref{homogeneous-1} and \eqref{homogeneous-1-buchong2}.

\end{theorem}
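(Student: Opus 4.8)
The plan is to obtain Theorem \ref{thm1-conformal} as a direct consequence of Theorem \ref{thm0-conformal} and Proposition \ref{lemma5-main}, so that no further analysis is required. Since by hypothesis $\Gamma\neq\Gamma_n$ and $f$ satisfies \eqref{elliptic}, \eqref{concave}, \eqref{homogeneous-1}, Theorem \ref{thm0-conformal} applies to $(M,g)$ and the given smooth positive $\psi$ as soon as one exhibits a $C^2$ \textit{admissible conformal} metric on $\bar M$ in the sense of \eqref{admissible-metric1}; and Proposition \ref{lemma5-main} produces such a metric whenever $\mathcal{A}(\bar M,[g],\Gamma)\neq\emptyset$. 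Thus the whole statement reduces to verifying that $\mathcal{A}(\bar M,[g],\Gamma)$ is nonempty for $M=X\times\Omega$.

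To see this I would exploit the product structure together with the fact that $\Omega$ sits inside Euclidean space. Let $y=(y^1,\dots,y^k)$ denote the standard coordinates of $\mathbb{R}^k\supseteq\bar\Omega$ (recall $k\geq1$), and set $u(\xi,y):=y^1\in C^\infty(\bar M)$ for $\xi\in X$. Then $du=dy^1$ is a nowhere-vanishing $1$-form on $\bar M=X\times\bar\Omega$, so for every $\tilde g\in[g]$ the gradient $\tilde\nabla u$, being the metric dual of $du$ with respect to $\tilde g$, is nonzero at every point of $\bar M$. Hence the set of $x_0\in\bar M$ with $\tilde\nabla u(x_0)=0$ is empty, and the requirement $\lambda_{\tilde g}(\Delta_{\tilde g}u\,\tilde g-\tilde\nabla^2 u)(x_0)\in\Gamma$ appearing in \eqref{A(M)-def} is satisfied vacuously; therefore $u\in\mathcal{A}(\bar M,[g],\Gamma)$. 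This is precisely the specialization to the product case of the observation recorded in Lemma \ref{remark1-nonzero} that \textit{admissible conformal} metrics exist on smooth bounded domains and on warped products, and one could alternatively just cite that lemma.

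Combining the two inputs completes the argument: $\mathcal{A}(\bar M,[g],\Gamma)\neq\emptyset$ gives, through Proposition \ref{lemma5-main}, an \textit{admissible conformal} metric $\underline g=e^{2\underline u}g$ as in \eqref{admissible-metric1}, and then Theorem \ref{thm0-conformal}, applied with this $\underline g$ and the prescribed $\psi$, produces the desired smooth complete conformal metric $g_\infty$ on $M$ solving \eqref{conformal-equ0} with $\lambda_{g_\infty}(G_{g_\infty})\in\Gamma$ in $M$. I do not expect any genuine obstacle specific to this theorem: all the difficulty --- the partial uniform ellipticity, the a priori estimates, and the continuity method --- is already packaged inside Theorem \ref{thm0-conformal}. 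The only point that has to be handled with a little care is the elementary one that a linear coordinate function on the $\mathbb{R}^k$-factor has nowhere-vanishing differential, so that it never enters $\mathcal{A}(\bar M,[g],\Gamma)$ through a critical point, and that this uses the hypothesis $k\geq1$.
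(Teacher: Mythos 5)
Your argument is correct and follows the same route as the paper: exhibit a function on $\bar M=X\times\bar\Omega$ with nowhere-vanishing gradient (you take a linear coordinate $y^1$ on the $\mathbb{R}^k$-factor, the paper just notes that such functions on $\bar\Omega$ exist), conclude $\mathcal{A}(\bar M,[g],\Gamma)\neq\emptyset$ by vacuity of the condition in \eqref{A(M)-def}, invoke Proposition \ref{lemma5-main} to produce an \textit{admissible conformal} metric, and then apply Theorem \ref{thm0-conformal}. Your write-up is slightly more explicit than the paper's but is substantively identical.
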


In the following theorem we obtain the asymptotic behavior and uniqueness  of complete conformal metrics satisfying \eqref{conformal-equ0}, in which  $\psi$ is assumed to a positive constant when restricted to $\partial M$ (rather than $M$), to be compared with previously mentioned results in \cite{Loewner1974Nirenberg,Gursky-Streets-Warren2011};
this extensively generalizes their results. 
Our proof is different and relies on \eqref{key1-main} and $|\nabla \mathrm{d}||_{\partial M}=1$, where $\mathrm{d}(x)$
denotes the distance from $x$ to $\partial M$ with respect to $g$, i.e. $\mathrm{d}(x)=\mathrm{dist}_g(x,\partial M)$.
\begin{theorem}
\label{thm1-asymptotic}
Let $\psi>0$, $\psi|_{\partial M}\equiv1$ and we assume the other assumptions in Theorem \ref{thm0-conformal} hold. Then there exists a unique smooth admissible complete metric $g_\infty=e^{2u_\infty}g$ satisfying  \eqref{conformal-equ0}.  Moreover,
\begin{equation}
\label{asymptotic-rate0}
\begin{aligned}
\lim_{x\rightarrow\partial M} (u_\infty(x)+\log \mathrm{d}(x))=\frac{1}{2}\log\frac{(n-1)(n-2)}{2}. \nonumber
\end{aligned}
\end{equation} 
%where $\mathrm{d}(x)$ denotes the distance from $x$ to $\partial M$ with respect to $g$, i.e. $\mathrm{d}(x)=\mathrm{dist}_g(x,\partial M)$.
 \end{theorem}

\begin{remark}
%A surprising fact is that
It is amazing that in Theorems  \ref{thm0-conformal}, \ref{thm1-conformal} and \ref{thm1-asymptotic}  we do not impose the following condition corresponding the ellipticity of the equation
\begin{equation}
\label{elliptic}
\begin{aligned}
 f_{i}(\lambda):= %f_{\lambda_i}(\lambda)=
 \frac{\partial f}{\partial \lambda_{i}}(\lambda)> 0  \mbox{ in } \Gamma,\  1\leq i\leq n,
 \end{aligned}
\end{equation}
to be compared with fundamental work of Caffarelli-Nirenberg-Spruck \cite{CNS3} in which \eqref{elliptic} was imposed as a crucial condition in the theory of fully nonlinear elliptic equations. %See \cite{Guan2007AJM,Guan12a,LiYY1990,LiYY1991,ShengUrbasWang-Duke,Trudinger90,Trudinger95,Gabor,Urbas2002,yuan2017,yuan2019} for more references.
Surprisingly, we can prove %in Proposition \ref{weak-elliptic} below 
that a weaker condition than \eqref{elliptic} 
 is satisfied %in general context. %More precisely, 
 by $f$ obeying \eqref{concave} and 
  \begin{equation}
\label{addistruc}
\begin{aligned}
\mbox{For each $\sigma<\sup_{\Gamma}f$ and } \lambda\in \Gamma, \mbox{   } \lim_{t\rightarrow +\infty}f(t\lambda)>\sigma.
\end{aligned}
\end{equation}
\begin{proposition}
 \label{weak-elliptic}
If $f$ satisfies \eqref{concave} and \eqref{addistruc}, then 
 \begin{equation}
\label{elliptic-weak}
\begin{aligned}
f_{i}(\lambda)\geq 0, \mbox{  } \forall 1\leq i\leq n,  \mbox{ and } \sum_{i=1}^n f_i(\lambda)>0   \mbox{ in } \Gamma.
 \end{aligned}
\end{equation}
 \end{proposition}
\end{remark}

  \begin{remark}
 \label{remark1-crucial}
It is a remarkable fact that the condition $\Gamma\neq\Gamma_n$ imposed in 
Theorems  \ref{thm0-conformal}, \ref{thm1-conformal} and \ref{thm1-asymptotic} is crucial and cannot be further dropped.
% in general setting.
%It  is not a technical assumption.

%\vspace{1mm}
 Here is a topological obstruction.
% In dimension 3, 
 On a $3$-manifold $(M,g)$, the Einstein tensor connects closely 
 with the sectional curvature:
% For a $3$-manifold $(M,g)$, 
Fix $x\in M$,  
 let $\Sigma\subset T_xM$ be a tangent $2$-plane, %(of dimension 2),
    $K_g(\Sigma)$ be the sectional curvature of $(M,g)$ with respect to  
  $\Sigma$, then (see  \cite{Gursky-Streets-Warren2010})
 \begin{equation} \label{GSW1} \begin{aligned}
 G_g(\vec{\bf n},\vec{\bf n})=-K_g(\Sigma), \nonumber
 \end{aligned}  \end{equation}
 where $\vec{\bf n}\in T_xM$ is a unit normal vector to $\Sigma$.  
Therefore, if Theorem \ref{thm0-conformal} holds for  $\Gamma=\Gamma_n$ (so do both Theorems   \ref{thm1-conformal} and  \ref{thm1-asymptotic}),  then for any conformal class $[g]$
on $\mathbb{S}^2\times (0,1)$ (not necessary the standard one, to be compared with that in \cite{Gursky-Streets-Warren2010}),  
 there would exist in $[g]$ a complete %conformal 
 metric with negative sectional curvature according to Theorem
  \ref{thm1-conformal},
which contradicts to the Cartan-Hadamard theorem.

 This topological obstruction also motivated Gursky-Streets-Warren to prove that any 3-manifold with boundary admits a complete conformal metric satisfying a pinching condition.

 \end{remark}

%When $f$ is a homogeneous of degree one, equation \eqref{conformal-equ0} is specifically reduced to 
In order to prove Theorem \ref{thm0-conformal} it suffices to solve the Dirichlet problem 
  \begin{equation}
  \label{conformal-equation-degree1}
 \begin{aligned}
 f(\lambda (\Delta u g-\nabla^2 u +\frac{1}{n-2}G_g+\frac{n-3}{2}|\nabla u|^2 g+du\otimes du))
 =\frac{\psi e^{2u}}{n-2},
 \end{aligned}
 \end{equation}
 %with %infinity boundary value
  \begin{equation}
 \label{infty-boundary}
\begin{aligned}
%\lim_{x\rightarrow\partial M} u(x)=+\infty,
u|_{\partial M} =+\infty,
  \end{aligned}
\end{equation}
 according to %Under the conformal change , the Einstein tensor obeys
  the  formula % the Einstein tensor of $\tilde{g}$ is given by
   under the conformal transform %change %of Einstein tensor
    $\tilde{g}=e^{2u}g$
 (see e.g. \cite{Besse1987}),
\begin{equation}
 \label{conformal-Einstein-tensor}
\begin{aligned}
\frac{1}{n-2}G_{\tilde{g}}=\frac{1}{n-2}G_g+\Delta u g-\nabla^2 u +\frac{n-3}{2}|\nabla u|^2 g+du\otimes du,
%\frac{2}{n-2}G_{\tilde{g}}=\frac{2}{n-2}G_g+\Delta u g-\nabla^2 u +\frac{n-3}{4}|\nabla u|^2 g+\frac{1}{2}du\otimes du,   \nonumber
\mbox{ if } n\geq 3,
\end{aligned}
\end{equation}
where and hereafter $\nabla u$ and $\nabla^2u$ are gradient and Hessian of $u$ with respect to $g$, 
 $|\nabla u|^2=g(\nabla u,\nabla u)$, 
  $\Delta$ is the Laplacian operator with $\Delta u=\mathrm{tr}_g(\nabla^2 u)$, 
 and $\lambda(A)$ $(=\lambda_g(A))$ denote the eigenvalues of $A$ with respect to %background metric 
 $g$.

   The key issue %ingredient 
  is to derive interior estimates for equation \eqref{conformal-equation-degree1}. 
However, 
 it is rather hard to achieve.
The topological obstruction 
 as described in Remark \ref{remark1-crucial} 
 indicates that when $\Gamma=\Gamma_n$, at least for $n=3$, the estimates for equation \eqref{conformal-equation-degree1}  have to fail,  in which the equation is only of $(n-1)$-uniform ellipticity, as denoted later, according to  
 Proposition \ref{prop31} proved below,  to be compared with those in 
\cite{Guan2008IMRN,Gursky-Streets-Warren2011,Gursky-Streets-Warren2010,Li2011Sheng} where the corresponding curvature equations are clearly of fully uniform ellipticity,   
  thereby proving existence results.
It follows  that
 the difficulty and
  obstruction arise primarily 
 from the lack of fully uniform ellipticity of the curvature equation. % \eqref{conformal-equation-degree1}.
 It is noteworthy that this is different from that of the $k$-Yamabe problem for Schouten tensor studied previously in \cite{ChangGurskyYang2002-JAM} for  $k=2$, $n=4$ and in \cite{Guan2003Wang} for general $k\leq n$ (see also \cite{SChen2005,Guan2004Wang} for other related work).

%\vspace{1mm}
We start our strategy with partial uniform ellipticity. 
%Our strategy starts with partial uniform ellipticity. 
Throughout this article, for simplicity, we call the function
 $f$ is of \textit{$k$-uniform ellipticity}, if $f$ satisfies \eqref{elliptic-weak} and 
there exists a uniform positive constant $\theta$, %(does not depend specifically on $\lambda$), 
such that for any $\lambda\in \Gamma$ with the order $\lambda_1\leq \cdots\leq \lambda_n$, %there holds
\begin{equation}
\label{partial-uniform2}
\begin{aligned}
f_{i}(\lambda)\geq \theta\sum_{j=1}^n f_j(\lambda), \mbox{  } \forall 1\leq i\leq k.
\end{aligned}
\end{equation}
In particular, \textit{$n$-uniform ellipticity} is also called \textit{fully uniform ellipticity}.
Accordingly, we have a similar notion of partial uniform ellipticity for a second order elliptic equation, 
if its linearized operator %(leading symbol) 
satisfies a similar condition.

% \vspace{1mm}
 %We start our strategy with partial uniform ellipticity. 
It was shown in \cite{Lin1994Trudinger} that $\sigma_k$ in $\Gamma_k$ is of $(n+1-k)$-uniform ellipticity, while their proof relies  essentially on the specific properties of $\sigma_k$ which %does 
can not adapt to more general functions.
The first object of our strategy is to settle the general context left open by Lin-Trudinger.
%Our approach is independent of any specific structure of elementary symmetric functions.
%Our proof only utilizes
  We observe that 
  a characterization (see Lemma \ref{lemma3.4}),  extending that initially proposed in  \cite{yuan2019},
   for $f$ satisfying %\eqref{elliptic},
    \eqref{concave} and \eqref{addistruc}
 leads naturally to bridging partial uniform ellipticity of $f$ and
  the count of negative component of a vector contained in $\Gamma$.

\begin{definition}
\label{yuan-kappa}
For the cone $\Gamma$ as stated above, we define
\begin{equation}\begin{aligned}
\kappa_{\Gamma}=\max \left\{l: (-\alpha_1,\cdots,-\alpha_l,\alpha_{l+1},\cdots, \alpha_n)\in \Gamma, \mbox{ where } \alpha_j>0, \mbox{ } \forall j=1,\cdots, n \right\}. \nonumber
\end{aligned}\end{equation}
\end{definition}

Clearly, $\kappa_\Gamma$
 is well defined; in addition, one can see that
\begin{enumerate}
\item $\kappa_\Gamma$ is an integer with $0\leq \kappa_\Gamma\leq n-1$. 
\item $\kappa_{\Gamma}=n-k \mbox{ if }\Gamma=\Gamma_k$.
\item $\kappa_\Gamma\geq 1$ if and only if $\Gamma\neq \Gamma_n$.
\item $\kappa_\Gamma=n-1 \mbox{ if and only if $\Gamma$ is of type 2  in the sense of \cite{CNS3}}.$
\end{enumerate}

\begin{theorem}
\label{yuan-k+1}
%Let $f$ and $\Gamma$ be as above and $\kappa_\Gamma$ be as defined in Definition \ref{yuan-kappa}.
Suppose $f$ satisfies %\eqref{elliptic},
 \eqref{concave} and \eqref{addistruc}.
Then \eqref{elliptic-weak} holds and there is a universal positive constant $\vartheta_{\Gamma}$ depending only on %$n$ and 
$\Gamma$, such that
    for each $ \lambda\in \Gamma$ with the order $\lambda_1 \leq \cdots \leq\lambda_n$,
   \begin{equation}
   \begin{aligned}
   f_{{i}}(\lambda) \geq   \vartheta_{\Gamma} \sum_{j=1}^{n}f_j(\lambda), \mbox{ for all }  1\leq i\leq \kappa_\Gamma+1. \nonumber
    %\mbox{  } \cdots, \mbox{  } f_{{{1+\kappa_{\Gamma}}}}(\lambda) \geq  \vartheta_{\Gamma} \sum_{j=1}^{n}f_j(\lambda).
   \end{aligned}
   \end{equation}
  In particular, $f$ is of fully uniform ellipticity in the case $\kappa_\Gamma=n-1$ (i.e., $\Gamma$ is of type 2).
\end{theorem}

  %  The following theorem %reveals a generic structure  %of fully nonlinear equations 
This theorem states that $f$ is exactly 
of \textit{$(\kappa_\Gamma+1)$-uniform ellipticity},
 which is in effect sharp; %as shown below. %in the following corollary.  %Corollary \ref{thm-sharp} below.
that is
 \begin{corollary}
    \label{thm-sharp}
   The $(\kappa_\Gamma+1)$-uniform ellipticity asserted in Theorem \ref{yuan-k+1} 
% is sharp and % That is $(\kappa_\Gamma+1)$-uniform ellipticity 
 cannot be further improved.
 \end{corollary}
  
   %\vspace{1mm}
  A somewhat surprising fact to us is that the measure of 
partial uniform ellipticity %($\vartheta_{\Gamma}$, $\kappa_\Gamma$)
depends only on the cone  $\Gamma$  instead  specifically on %specific function
the function $f$. %to be compared with those in \cite{Lin1994Trudinger} for $f=(\sigma_k)^{\frac{1}{k}}$. 
Namely, for $\Gamma$ fixed, the constants $\vartheta_{\Gamma}$, $\kappa_\Gamma$ as claimed in Theorem \ref{yuan-k+1} are both universal, and each $f$ as assumed there
%satisfying \eqref{elliptic}, \eqref{concave} and \eqref{addistruc} in $\Gamma$  
is of $(\kappa_\Gamma+1)$-uniform ellipticity with same constant $\vartheta_{\Gamma}$. %and depends only on $\Gamma$.
This is new even for homogenous functions of \eqref{homogeneous-1}.
 As a consequence, we %deduce the following %result.
confirm in Corollary \ref{prop1-operator}  the following inequality in general context  % \eqref{key1-yuan}, 
\begin{equation}
\label{key1-yuan}
\begin{aligned}
f_i(\lambda)\geq \vartheta_{\Gamma} \sum_{j=1}^n f_j(\lambda)  \mbox{ if } \lambda_i\leq 0.
\end{aligned}
\end{equation}
  This inequality was imposed extensively as a vital assumption by many mathematicians \cite{LiYY1991,Trudinger90,ShengUrbasWang-Duke,Urbas2002,Guan2007AJM,SChen2007,SChen2009} to study certain geometric PDEs from classical differential geometry and conformal geometry. %including Weingarten equations, 

% \vspace{1mm}
Theorem \ref{yuan-k+1} suggests  an effective and novel way to  %propose right approach to
 understand the structure of linearized operator of
  a wide class of fully nonlinear %elliptic
   equations.
 Surprisingly, even without assuming \eqref{elliptic}, we can employ it to show in Proposition \ref{prop31} %Section \ref{section4}
  that equation \eqref{conformal-equation-degree1} satisfying \eqref{concave} and \eqref{addistruc}
   is %indeed 
 in effect of fully uniform ellipticity provided $\Gamma\neq\Gamma_n$. 
 % Once we proved \eqref{conformal-equation-degree1} is %indeed 
 % of fully uniform ellipticity,
 As a result, we can apply it to derive interior estimates then to solve the prescribed curvature problem %\eqref{conformal-equ0}
  for Einstein tensor.  The method is then standard, which is analogous to that used in  \cite{Guan2008IMRN,GQY2018,Gursky-Streets-Warren2010,Li2011Sheng}. %where the equations are of fully uniform ellipticity. 
% derive  interior estimates  for solutions of  \eqref{conformal-equation-degree1}.

%\vspace{1mm}
We prove a more general result than Theorems \ref{thm0-conformal} and \ref{thm1-asymptotic}. For 
the %modified Schouten tensors 
$S_g^\tau$ with
\begin{equation}
 \label{assumption-tau}
\begin{aligned}
\tau>1+(n-2)(1-\kappa_\Gamma\vartheta_{\Gamma})
\end{aligned}
\end{equation}
which is rather delicate and appropriate %for the parameter $\tau$ 
as shown in Remarks  \ref{remark1-crucial}  and \ref{remark-important1},
we can find on $M$ a complete conformal metric ${g}_\infty$ by solving the curvature equation 
 \begin{equation}
 \label{conformal-equ0-general} 
\begin{aligned}
f(\lambda_{{g}_\infty}(S_{{g}_\infty}^\tau))=\frac{\psi}{n-2}.
\end{aligned}
\end{equation}

\begin{theorem}
\label{thm0-general}
Suppose 
\eqref{concave}, \eqref{homogeneous-1},  \eqref{homogeneous-1-buchong2} hold and that 
 $\psi$  is a smooth positive function on $\bar M$. 
Assume in addition that the parameter $\tau$  satisfies \eqref{assumption-tau} and 
there is an $C^2$-conformal metric with
\begin{equation}
\label{admissible-metric23}
\begin{aligned}
\lambda(S_{e^{2\underline{u}}g}^\tau)\in \Gamma \mbox{ in } \bar M \mbox{ for some } \underline{u}\in C^2(\bar M). 
\end{aligned}
\end{equation}
Then there is on $M$ a smooth complete conformal metric 
 ${g}_\infty$  
 satisfying 
 \eqref{conformal-equ0-general} and  
 \begin{equation}
\label{conformal-equ0-general-admissible} 
\begin{aligned}
\lambda_{{g}_\infty}(S^\tau_{{g}_\infty})\in \Gamma \mbox{ in } M.
\end{aligned}
\end{equation}
 
 In addition, if $\psi|_{\partial M}=1$ and $\tau\geq2$, then %there is a unique 
 the smooth complete  metric $g_\infty=e^{2u_\infty}g$ satisfying \eqref{conformal-equ0-general} and 
 \eqref{conformal-equ0-general-admissible} 
 is unique.
 Furthermore,
 \begin{equation}
\label{asymptotic-rate0-general}
\begin{aligned}
\lim_{x\rightarrow\partial M} (u_\infty(x)+\log \mathrm{d}(x))=\frac{1}{2}\log\frac{n\tau+2-2n}{2}. \nonumber
\end{aligned}
\end{equation} 
\end{theorem}

In particular, for products as in Theorem \ref{thm1-conformal}, we have the existence result.
\begin{theorem}
  \label{coro73}

Let $(M,g)=(X\times\Omega,g)$ be as in Theorem \ref{thm1-conformal}.
  In addition to  
   \eqref{concave}, \eqref{homogeneous-1},  \eqref{homogeneous-1-buchong2},  
  \eqref{assumption-tau},
we assume    
$\tau\geq2$. Then for any $0<\psi\in C^\infty(\bar M)$,  
there is a smooth complete conformal   metric ${g}_\infty$ 
  satisfying \eqref{conformal-equ0-general} and  \eqref{conformal-equ0-general-admissible}.
 
\end{theorem}

    Throughout this article, $\vartheta_{\Gamma}$ always stands for the constant claimed in 
    Theorem \ref{yuan-k+1},  $\kappa_\Gamma$
denotes the constant defined in Definition \ref{yuan-kappa}.

\vspace{1mm}
The paper is organized as follows. 
In Section \ref{section2} we investigate the partial uniform ellipticity. 
In Section \ref{section4} we 
 apply  the partial uniform ellipticity to verify that a class of fully nonlinear elliptic equations 
 are of fully uniform ellipticity. 
In Section \ref{interiori-estimate} we derive interior estimates for a class of Hessian type equations of fully uniform ellipticity.
The proof relies upon the fully uniform ellipticity that we verified in previous section.
In Section \ref{proofofmainresult} we prove Theorems  \ref{thm0-conformal}, \ref{thm1-conformal} and \ref{thm1-asymptotic}.
In Section \ref{general-existence} Theorems \ref{thm0-general} and \ref{coro73} are further obtained.
Finally, we briefly discuss Hessian equations and Weingarten equations  in Section \ref{section3}.

\section{The partial uniform ellipticity}
\label{section2}

% Let's denote $Df(\lambda):=(f_1(\lambda),\cdots,f_n(\lambda))$.
Throughout this paper, we denote    
   \begin{equation}
 \begin{aligned}
%\,& \sup_{\partial \Gamma}f :=  \sup_{\lambda_{0}\in \partial \Gamma } \limsup_{\lambda\rightarrow \lambda_{0}}f(\lambda),\\
% \vec{\bf 1}:= \,& (1,\cdots, 1)\in \mathbb{R}^n, \mbox{   }
 % Df(\lambda):= \,& (f_1(\lambda),\cdots,f_n(\lambda)), \\
 \Gamma^\sigma:= \,& \{\lambda\in\Gamma: f(\lambda)>\sigma\},   \mbox{  }
  \partial\Gamma^\sigma:= \{\lambda\in\Gamma: f(\lambda)=\sigma\}, \\ \nonumber
% \end{aligned}  \end{equation}
% \begin{equation} \label{Gamma2} \begin{aligned}
  \Gamma^{\infty}_{\mathbb{R}^k}:= \,& \left\{\lambda'\in\mathbb{R}^k: (\lambda',c,\cdots,c)\in\Gamma \mbox{ for some } c>0\right\}. \nonumber
  \end{aligned} \end{equation}
% Clearly, $\Gamma^\infty_{\mathbb{R}^{\kappa_\Gamma}}=\mathbb{R}^{\kappa_\Gamma}$ if $\kappa_\Gamma\geq 1$. 

Our approach is based on the results we proved in this section and Section \ref{section4}.
This section is devoted to %developing 
the investigation of partial uniform ellipticity.
% \vspace{1mm}
The following lemma is the starting point. %of our partial uniform ellipticity. 
The first two statements of this lemma was initially proposed by the author in \cite[Lemma 3.4]{yuan2019} for $f$ satisfying  
 \eqref{concave}, \eqref{elliptic} and \eqref{addistruc}. In this paper we extend it to more general case by removing \eqref{elliptic}.
%among which the equivalence of the first two was asserted in %Lemma 3.4 of
%  \cite{yuan2019}.
% It is a characterization of $f$ satisfying %\eqref{elliptic}, 
% \eqref{concave} and \eqref{addistruc}.
% which plays an important role in the discussion of partial uniform ellipticity of nonlinear elliptic operators. 
 \begin{lemma} 
 \label{lemma3.4} 
 For $f$ satisfying %\eqref{elliptic} and 
 \eqref{concave}, the following %statements 
 are equivalent each other.
 \begin{enumerate}
 \item $f$ further satisfies \eqref{addistruc}.
\item For each $\lambda$, $\mu\in \Gamma,$ $\sum_{i=1}^n f_i(\lambda)\mu_i>0.$
\item $f(\lambda+\mu)>f(\lambda)$ for any $\lambda$, $\mu\in\Gamma$.
\item $f(\lambda+c\vec{\bf 1})>f(c\vec{\bf 1})$, $\forall\lambda\in\Gamma$, $\forall c>0$.
\end{enumerate}
\end{lemma}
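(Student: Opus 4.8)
The plan is to prove the cyclic chain of implications $(1)\Rightarrow(2)\Rightarrow(3)\Rightarrow(4)\Rightarrow(1)$, since this closes the loop and gives pairwise equivalence. The implications $(2)\Rightarrow(3)$ and $(3)\Rightarrow(4)$ are essentially immediate, so the real content lies in $(1)\Rightarrow(2)$ and $(4)\Rightarrow(1)$.

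First I would handle $(1)\Rightarrow(2)$. Fix $\lambda,\mu\in\Gamma$. By concavity \eqref{concave} applied to the segment from $\lambda$ to $\lambda+t\mu$ (using that $\Gamma$ is a convex cone so $\lambda+t\mu\in\Gamma$ for all $t\ge 0$), one has $f(\lambda+t\mu)\le f(\lambda)+t\sum_i f_i(\lambda)\mu_i$ for $t\ge 0$; more usefully, concavity gives that $t\mapsto f(\lambda+t\mu)$ is concave, hence its derivative $\sum_i f_i(\lambda+t\mu)\mu_i$ is nonincreasing in $t$, so $\sum_i f_i(\lambda)\mu_i\ge \lim_{t\to\infty}\frac{d}{dt}f(\lambda+t\mu)\ge 0$; the point is to upgrade $\ge 0$ to $>0$. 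Suppose for contradiction $\sum_i f_i(\lambda)\mu_i=0$. Then by monotonicity of the derivative and \eqref{elliptic}, $\sum_i f_i(\lambda+t\mu)\mu_i=0$ for all $t\ge 0$ as well would fail positivity of $f_i$ unless $\mu=0$; more carefully, $f(\lambda+t\mu)\le f(\lambda)$ for all $t\ge 0$, i.e. $\lim_{t\to\infty}f(\lambda+t\mu)\le f(\lambda)<\sup_\Gamma f$, which directly contradicts \eqref{addistruc} applied with $\sigma=f(\lambda)$ and the ray direction $\mu\in\Gamma$ (noting $f(t\mu)$ or $f(\lambda+t\mu)$ comparison; since $\lambda+t\mu$ and $t\mu$ are eventually comparable up to a fixed shift and $f$ is monotone and concave, $\lim_t f(\lambda+t\mu)=\lim_t f(t\mu)>\sigma$). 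This yields the strict inequality.

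Next, $(2)\Rightarrow(3)$: for $\lambda,\mu\in\Gamma$, write $f(\lambda+\mu)-f(\lambda)=\int_0^1\sum_i f_i(\lambda+s\mu)\mu_i\,ds$, and each integrand is positive by $(2)$ since $\lambda+s\mu\in\Gamma$; hence the integral is positive. Then $(3)\Rightarrow(4)$ is the special case $\mu=c\vec{\bf 1}$, valid because $\Gamma_n\subseteq\Gamma$ forces $c\vec{\bf 1}\in\Gamma$ for $c>0$. Finally $(4)\Rightarrow(1)$: given $\sigma<\sup_\Gamma f$ and $\lambda\in\Gamma$, pick $c>0$ with $f(c\vec{\bf 1})$ close to the supremum, or rather exploit homogeneity-free monotonicity—for any fixed $\lambda\in\Gamma$ and $t>0$, $f(t\lambda)=f(t\lambda)$; writing $t\lambda = (t\lambda - \epsilon\vec{\bf 1}) + \epsilon\vec{\bf 1}$ and iterating $(4)$, or directly observing $f(t\lambda+c\vec{\bf 1})>f(c\vec{\bf 1})$ and letting $c\to\infty$ to see $\sup_\Gamma f$ is approached along $\{c\vec{\bf 1}\}$, one deduces $\lim_{t\to\infty}f(t\lambda)=\sup_{c>0}f(c\vec{\bf 1})=\sup_\Gamma f>\sigma$.

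The main obstacle I anticipate is the strictness in $(1)\Rightarrow(2)$: turning the soft inequality $\sum_i f_i(\lambda)\mu_i\ge 0$ (which follows from concavity and ellipticity alone) into a strict one genuinely requires \eqref{addistruc}, and one must argue cleanly that the concave function $t\mapsto f(\lambda+t\mu)$ cannot have zero asymptotic slope without violating $\lim_{t\to\infty}f(\lambda+t\mu)>\sigma$ for the appropriate $\sigma$; matching the limit of $f(\lambda+t\mu)$ with the quantity controlled by \eqref{addistruc} (which is literally $\lim_{t\to\infty}f(t\lambda)$) needs a short comparison argument using monotonicity of $f$ in $\Gamma$ (itself a consequence of $(2)$-type positivity, so one should instead use the direct estimate $f(\lambda+t\mu)\le f(\lambda)$ to contradict \eqref{addistruc} with $\sigma:=f(\lambda)$, provided $f(\lambda)<\sup_\Gamma f$, and dispatch the trivial case $f(\lambda)=\sup_\Gamma f$ separately).
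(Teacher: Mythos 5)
The overall plan (close the cycle $(1)\Rightarrow(2)\Rightarrow(3)\Rightarrow(4)\Rightarrow(1)$) is the same as the paper's, and $(2)\Rightarrow(3)$, $(3)\Rightarrow(4)$ and $(4)\Rightarrow(1)$ are fine. The gap is in the strictness step of $(1)\Rightarrow(2)$, and it is real. Having reduced to contradiction with $Df(\lambda)\cdot\mu=0$, you obtain $f(\lambda+t\mu)\le f(\lambda)$, and you repeatedly claim this ``directly contradicts'' \eqref{addistruc} with $\sigma=f(\lambda)$. But \eqref{addistruc} says $\lim_{t\to\infty}f(t\nu)>\sigma$ for $\nu\in\Gamma$, i.e.\ it controls $f$ along rays through the origin, not along the affine ray $\lambda+t\mu$. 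You notice this mismatch and observe that bridging it by monotonicity would be circular, but the ``fix'' you then propose is verbatim the same non-contradiction. Nothing in the write-up actually links $f(\lambda+t\mu)$ to $f(t\mu)$ or $f(t\lambda)$ without monotonicity, so the strictness claim is not established.

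The repair is short, and it's essentially what the paper does, phrased more geometrically. By \eqref{addistruc} with $\sigma=f(\lambda)$ one has $T\mu\in\Gamma^{f(\lambda)}$ for $T$ large; since $\Gamma^{f(\lambda)}$ is convex and $Df(\lambda)$ is a nonzero inward normal to its boundary at $\lambda$ (by ellipticity), concavity gives $Df(\lambda)\cdot(T\mu-\lambda)\ge f(T\mu)-f(\lambda)>0$. Taking $\mu=\lambda$ yields $Df(\lambda)\cdot\lambda>0$, and substituting back yields $Df(\lambda)\cdot\mu>0$ for all $\mu\in\Gamma$. In your analytic framework the missing bridge is the same: apply concavity \emph{at $\lambda$ to the point $t\mu$}, i.e.\ $f(t\mu)\le f(\lambda)+tDf(\lambda)\cdot\mu-Df(\lambda)\cdot\lambda$, which under $Df(\lambda)\cdot\mu=0$ and $Df(\lambda)\cdot\lambda\ge 0$ (the $\mu=\lambda$ case) bounds $f(t\mu)\le f(\lambda)$, and \emph{this} is what contradicts \eqref{addistruc}. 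You should also be careful with the preliminary soft inequality $Df(\lambda)\cdot\mu\ge 0$: you assert it follows from concavity and ellipticity alone via ``a concave function with a negative derivative goes to $-\infty$,'' but without a lower bound for $f$ on the ray that argument needs \eqref{addistruc} as well; deriving it by contradiction against \eqref{addistruc} (as above) avoids the issue. One more minor point: your alternative argument for $(4)\Rightarrow(1)$ that ``lets $c\to\infty$ in $f(t\lambda+c\vec{\bf 1})>f(c\vec{\bf 1})$'' does not bound $f(t\lambda)$; the correct route, which you also mention, is the paper's shift $t\lambda=(t\lambda-c_\sigma\vec{\bf 1})+c_\sigma\vec{\bf 1}$ for $t$ so large that $t\lambda-c_\sigma\vec{\bf 1}\in\Gamma$.
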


\begin{proof}

%$\mathrm{(1)}\Rightarrow \mathrm{(2)}$: 
%$\mathrm{1}\Rightarrow \mathrm{2}$: 
%Let  $Df(\lambda)= (f_1(\lambda),\cdots,f_n(\lambda))$. Condition \eqref{addistruc}  yields for any $\lambda$, $\mu \in \Gamma$, there is $T\geq1$ such that for each $t>T$, there holds $t\mu\in\Gamma^{f(\lambda)}$. Together with the convexity of level set $\partial \Gamma^{f(\lambda)}$, %further one gets $Df(\lambda)\cdot (t\mu-\lambda)>0$. Thus, $Df(\lambda)\cdot\lambda>0$ (if one takes $\mu=\lambda$) and $Df(\lambda)\cdot\mu>0$.
 In the proof, we use the formula saying that for any $\lambda$, $\mu\in\Gamma$,
   \begin{equation}
   \label{concavity1}
   \begin{aligned}
f(\lambda)\geq f(\mu)+\sum_{i=1}^n f_i(\lambda)(\lambda_i-\mu_i),
\end{aligned}
\end{equation}
which follows from the concavity of $f$.

 $\mathrm{(1)}\Rightarrow \mathrm{(2)}$: 
%$\mathrm{1}\Rightarrow \mathrm{2}$: 
Fix $\lambda\in \Gamma$. The condition \eqref{addistruc} implies that for any  
$\mu \in \Gamma$, there is $T\geq1$ (may depend on $\mu$) such that for each $t>T$,
 $f(t\mu)>f(\lambda)$. Together with \eqref{concavity1},
 one gets $\sum_{i=1}^n f_i(\lambda) (t\mu_i-\lambda_i)>0$. Thus, $\sum_{i=1}^nf_i(\lambda)\lambda_i>0$ (if one takes $\mu=\lambda$)
  then $\sum_{i=1}^n f_i(\lambda)\mu_i>0$.
  
$\mathrm{(2)}\Rightarrow \mathrm{(3)}$:
%$\mathrm{2}\Rightarrow \mathrm{3}$: 
 The proof uses \eqref{concavity1}.
%$f(\mu)\geq f(\lambda)+\sum_{i=1}^n f_i(\mu)(\lambda_i-\mu_i), \mbox{   } \forall \lambda,  \mbox{  } \mu\in\Gamma.$
% \begin{equation}\label{concavity1}\begin{aligned}
%f(\mu)\geq f(\lambda)+\sum_{i=1}^n f_i(\mu)(\mu_i-\lambda_i), \mbox{   } \forall \lambda,  \mbox{  } \mu\in\Gamma.
%\end{aligned}\end{equation}
%which follows from \eqref{concavity1}.

%$\mathrm{(3)}\Rightarrow \mathrm{(4)}$: It is trivial.

$\mathrm{(4)}\Rightarrow \mathrm{(1)}$: 
%$\mathrm{4}\Rightarrow \mathrm{1}$: 
Given $\sup_{\partial\Gamma}f<\sigma<\sup_\Gamma f$.  Let  
 $c_\sigma$ be the positive constant with
   \begin{equation}
 \label{kappa1-sigma}
 \begin{aligned}
 f(c_\sigma \vec{\bf 1})=\sigma.
 \end{aligned}
  \end{equation}
   For any $\lambda\in\Gamma$, 
    $t\lambda-c_\sigma\vec{\bf 1}\in\Gamma$ for $t> \frac{\sqrt{n}c_\sigma}{\mbox{dist}(\lambda,\partial\Gamma)}$.
  %Thus $f(t\lambda)\geq f(c_\sigma\vec{\bf 1})+f_i(t\lambda)(t\lambda_i-c_\sigma) > f(c_\sigma\vec{\bf 1})=\sigma$.
  Thus $f(t\lambda)> \sigma$ for such $t$.
\end{proof}

As an interesting result, we obtain Proposition \ref{weak-elliptic}.
%show that \eqref{elliptic-weak} holds for $f$ satisfying \eqref{concave} and \eqref{addistruc}.
% \begin{proposition} \label{weak-elliptic}
%If $f$ satisfies \eqref{concave} and \eqref{addistruc}, then  \eqref{elliptic-weak} holds automatically.
% \end{proposition}
\begin{proof}
[Proof of Proposition \ref{weak-elliptic}]
Fix $\lambda\in\Gamma$.
According to Lemma \ref{lemma3.4}  $\sum_{i=1}^n f_i(\lambda)\mu_i>0$ for any $\mu\in \Gamma$.
%moreover,  $\sum_{i=1}^n f_i(\lambda)\mu_i\geq0$ for  $\mu\in\overline{\Gamma}$.
  By setting $\mu=\vec{\bf 1}$,   $\sum_{i=1}^n f_i(\lambda)>0$. 
Note that $\Gamma_n\subseteq\Gamma$,  %$\overline{\Gamma}_n\subseteq\overline{\Gamma}$
 then \[f_i(\lambda)\geq 0 \mbox{ for each} i.\]
 \end{proof}
 
 \begin{corollary}
 Suppose that $f$ satisfies %\eqref{elliptic},
  \eqref{concave} and \eqref{addistruc}.
 Then for any $\lambda\in\Gamma$, $\sum_{i=1}^n f_i(\lambda)\lambda_i>0$ %and $\sum_{i=1}^n f_i(\lambda)>0$.
  i.e., $f(t\lambda)$ is a strictly increasing function in $t\in\mathbb{R}^+$.
 \end{corollary}

Fix $\lambda\in\Gamma$. 
%For $i\neq j$, let $\mu_i=\lambda_j$, $\mu_j=\lambda_i$,  $\mu_k=\lambda_k$ for $k\neq i, j$, then
% $f(\mu)=f(\lambda)$ according to  the symmetry of $f$. Together with \eqref{concavity1}, 
It follows from \eqref{concavity1} and the symmetry of $f$ that 
 \[f_i(\lambda)\geq f_j(\lambda) \mbox{ for } \lambda_i\leq\lambda_j.\]
Therefore,   Theorem \ref{yuan-k+1} for the case $\Gamma=\Gamma_n$ $(\mbox{i.e. } \kappa_\Gamma=0)$ immediately follows. For general $\Gamma$, 
it is a consequence of the following proposition.
\begin{proposition}
\label{yuanrr-2}
Assume $\Gamma\neq \Gamma_n$ and  $f$ satisfies %\eqref{elliptic},
 \eqref{concave} and \eqref{addistruc}. 
For the $\kappa_\Gamma$ as defined in Definition \ref{yuan-kappa}, let
  $\alpha_1, \cdots, \alpha_n$ be $n$ strictly positive constants such that 
     $$(-\alpha_1,\cdots,-\alpha_{\kappa_\Gamma}, \alpha_{\kappa_\Gamma+1},\cdots, \alpha_n)\in \Gamma.$$
     In addition,  assume $\alpha_1\geq\cdots\geq \alpha_{\kappa_\Gamma}$.
   Then for each $ \lambda\in \Gamma$ with order $\lambda_1 \leq \cdots \leq\lambda_n$,
    \begin{equation}\label{theta1}
\begin{aligned}
 f_{\kappa_\Gamma+1}(\lambda)\geq\frac{\alpha_1}{(\sum_{i=\kappa_\Gamma+1}^n \alpha_i-\sum_{i=2}^{\kappa_\Gamma}\alpha_i)}f_1(\lambda).
\end{aligned}
\end{equation}
Furthermore, $f_1(\lambda)\geq \frac{1}{n}\sum_{i=1}^n f_i(\lambda).$
\end{proposition}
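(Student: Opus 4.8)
The plan is to combine two ingredients: the sign condition $Df(\lambda)\cdot\mu>0$ for all $\lambda,\mu\in\Gamma$, which is the form of \eqref{addistruc} provided by the second item of Lemma \ref{lemma3.4}, and the standard monotonicity of the first derivatives of a symmetric concave function.

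First I would record the elementary fact that, because $f$ is symmetric and satisfies \eqref{concave}, the ordering $\lambda_1\leq\cdots\leq\lambda_n$ forces
\[
f_1(\lambda)\geq f_2(\lambda)\geq\cdots\geq f_n(\lambda).
\]
This is seen by restricting $f$ to the segment in $\Gamma$ joining $\lambda$ to the point obtained by transposing two of its entries: the restriction is concave, it is symmetric about the midpoint of that segment by symmetry of $f$, and its derivative at $\lambda$ is the difference of the two corresponding first derivatives, which is therefore nonnegative when the entry being raised is the smaller one. In particular $f_1(\lambda)=\max_{1\leq i\leq n}f_i(\lambda)$, and averaging immediately gives $f_1(\lambda)\geq\frac{1}{n}\sum_{i=1}^n f_i(\lambda)$, which is the final assertion.

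For \eqref{theta1}, I would apply the second item of Lemma \ref{lemma3.4} with the test vector $\mu=(-\alpha_1,\dots,-\alpha_{\kappa_\Gamma},\alpha_{\kappa_\Gamma+1},\dots,\alpha_n)\in\Gamma$, obtaining
\[
\sum_{i=\kappa_\Gamma+1}^{n}\alpha_i f_i(\lambda)>\alpha_1 f_1(\lambda)+\sum_{i=2}^{\kappa_\Gamma}\alpha_i f_i(\lambda).
\]
Using the ordering from the previous step, on the left I bound $f_i(\lambda)\leq f_{\kappa_\Gamma+1}(\lambda)$ for $i\geq\kappa_\Gamma+1$, and on the right I bound $f_i(\lambda)\geq f_{\kappa_\Gamma+1}(\lambda)$ for $2\leq i\leq\kappa_\Gamma$ while keeping the term $\alpha_1 f_1(\lambda)$ intact; the hypothesis $\alpha_1\geq\cdots\geq\alpha_{\kappa_\Gamma}$ is exactly what makes it legitimate to single out the largest weight $\alpha_1$ against $f_1$. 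This yields
\[
\Big(\sum_{i=\kappa_\Gamma+1}^{n}\alpha_i-\sum_{i=2}^{\kappa_\Gamma}\alpha_i\Big)f_{\kappa_\Gamma+1}(\lambda)>\alpha_1 f_1(\lambda).
\]
Since $\mu\in\Gamma\subseteq\Gamma_1$, we have $\sigma_1(\mu)>0$, i.e. $\sum_{i=\kappa_\Gamma+1}^{n}\alpha_i>\sum_{i=1}^{\kappa_\Gamma}\alpha_i$, so the coefficient on the left is strictly larger than $\alpha_1>0$; dividing by it proves \eqref{theta1}.

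I do not expect a genuine obstacle: the whole content lies in choosing the correct test vector $\mu$ and the correct direction for the monotonicity of the $f_i$'s, after which the estimate is a one-line rearrangement. The only points that require a little care are keeping track of which inequalities are strict and verifying that the denominator in \eqref{theta1} is positive (in fact strictly larger than $\alpha_1$), which is precisely where the inclusion $\Gamma\subseteq\Gamma_1$ enters.
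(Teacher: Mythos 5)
Your proof is correct and takes essentially the same route as the paper: you use the same starting inequality $-\sum_{i=1}^{\kappa_\Gamma}\alpha_i f_i(\lambda)+\sum_{i=\kappa_\Gamma+1}^n\alpha_i f_i(\lambda)>0$ coming from Lemma \ref{lemma3.4}(2), and the same monotonicity $f_1\geq\cdots\geq f_n$ from concavity, replacing $f_i$ by $f_{\kappa_\Gamma+1}$ on both sides while keeping the $\alpha_1 f_1$ term; the paper compresses this into the phrase ``by \eqref{good1-yuan} and iteration,'' and your write-up supplies exactly the details that phrase leaves implicit. One small quibble: the hypothesis $\alpha_1\geq\cdots\geq\alpha_{\kappa_\Gamma}$ is not actually what ``makes it legitimate'' to isolate $\alpha_1 f_1(\lambda)$ --- the rearrangement you perform is valid for any choice of which index to keep intact; the ordering assumption merely ensures that the constant $\alpha_1/(\sum_{i>\kappa_\Gamma}\alpha_i-\sum_{i=2}^{\kappa_\Gamma}\alpha_i)$ is the best one this argument produces from the given $\alpha$'s, not that the step itself is permissible.
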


\begin{proof}
%[Proof of Theorem \ref{yuan-k+1}]
Fix $\lambda\in\Gamma$
%For $i\neq j$, let $\mu_i=\lambda_j$, $\mu_j=\lambda_i$,  $\mu_k=\lambda_k$ for $k\neq i, j$, then $f(\mu)=f(\lambda)$ according to  the symmetry of $f$. Together with \eqref{concavity1}, it follows that if $\lambda_i<\lambda_j$ then $f_i(\lambda)\geq f_j(\lambda)$.
%By \eqref{concave},   
with the order $\lambda_1\leq \lambda_2\leq \cdots \leq\lambda_n$. One then has
 $ f_1(\lambda)\geq f_2(\lambda)\geq \cdots \geq f_n(\lambda)$ and
 $f_1(\lambda)\geq \frac{1}{n}\sum_{i=1}^n f_i(\lambda). $
 By Lemma \ref{lemma3.4},
 \begin{equation}
 \label{good1-yuan}
\begin{aligned}
-\sum_{i=1}^{\kappa_\Gamma} \alpha_i f_i(\lambda)+\sum_{i=\kappa_\Gamma+1}^n \alpha_i f_i(\lambda)>0,
\end{aligned}
\end{equation}
which simply yields $f_{\kappa_\Gamma+1}(\lambda)>   \frac{\alpha_1}{\sum_{i=\kappa_\Gamma+1}^n \alpha_i}f_1(\lambda)$.
In addition, 
one can derive \eqref{theta1} by using iteration and \eqref{good1-yuan}.
%This completes the proof.
 
\end{proof}

The following corollary confirms condition \eqref{key1-yuan} in very general context.
\begin{corollary}
\label{prop1-operator}
%Let $\vartheta_{\Gamma}$ be as in Theorem \ref{yuan-k+1}.  
The condition \eqref{key1-yuan} is satisfied if $f$ satisfies 
%\eqref{elliptic}, 
\eqref{concave} and \eqref{addistruc}. %then
 %there exists a universally positive constant $\vartheta_{\Gamma}$ (same as in Theorem \ref{yuan-k+1}) depending only on $n$ and $\Gamma$, such that 
  % for each $\lambda\in \Gamma$ we have 
%\begin{equation} \label{key1-yuan} \begin{aligned}
%f_i(\lambda)\geq \vartheta_{\Gamma} \sum_{j=1}^n f_j(\lambda)  \mbox{ if } \lambda_i\leq 0.
%\end{aligned}\end{equation}
%Here $\vartheta_{\Gamma}$ is the constant as in Theorem \ref{yuan-k+1}.
 Moreover,  \eqref{addistruc} 
%in Proposition \ref{prop1-operator} 
can be removed if $n=2$.
\end{corollary}

\begin{proof}
%[Proof of Corollary \ref{prop1-operator}]
 % The proof is based on Theorem \ref{yuan-k+1}.
%For simplicity $\kappa=\kappa_\Gamma$ for $\kappa_\Gamma$ defined in Definition \ref{yuan-kappa}.
Fix $\lambda=(\lambda_1,\cdots,\lambda_n)\in \Gamma.$ Without loss of generality, we assume
$\lambda_n\geq \cdots\geq \lambda_1.$
 %Then the concavity yields $f_1(\lambda)\geq \cdots\geq f_n(\lambda).$
By the definition of $\kappa_\Gamma$,  $\lambda_{\kappa_\Gamma+1}>0.$ Therefore, %one deduces %if $\lambda_i\leq 0$ then
$$i\leq \kappa_\Gamma \mbox{ if } \lambda_i\leq0.$$
%Proposition \ref{prop1-operator} 
Therefore,  \eqref{key1-yuan}  follows from Theorem \ref{yuan-k+1}.
\end{proof}
 
 Corollary \ref{thm-sharp} simply follows from Proposition \ref{weak-elliptic} and the following proposition.
 \begin{proposition}
\label{thm-k+1}

In addition to %\eqref{elliptic} and 
\eqref{concave} and \eqref{elliptic-weak}, we assume that
 $f$ is $(k+1)$-uniform ellipticity for some $1\leq k\leq n-1$.  Then $\kappa_\Gamma\geq k$, and so
  $\Gamma^{\infty}_{\mathbb{R}^k}=\mathbb{R}^k.$
  
\end{proposition}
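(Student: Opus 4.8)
The plan is to show the contrapositive-flavoured statement by directly producing, for an arbitrary $\lambda' \in \mathbb{R}^k$, a positive constant $c$ with $(\lambda', c, \cdots, c) \in \Gamma$; this is precisely what $\Gamma^{\infty}_{\mathbb{R}^k} = \mathbb{R}^k$ asserts, since the reverse inclusion $\Gamma^{\infty}_{\mathbb{R}^k} \subseteq \mathbb{R}^k$ is trivial. First I would recall that $(k+1)$-uniform ellipticity \eqref{partial-uniform2} gives a fixed $\theta > 0$ with $f_i(\lambda) \geq \theta \sum_{j=1}^n f_j(\lambda)$ for $1 \leq i \leq k+1$ whenever $\lambda$ is ordered increasingly. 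The key quantitative consequence I want is a lower bound on how negative the smallest eigenvalues can be made: if $\mu \in \Gamma$ is ordered and we travel in a coordinate direction $-e_i$ for $i \leq k$, the function $t \mapsto f(\mu - t e_i)$ has derivative $-f_i(\mu - t e_i)$, which stays comparable (via $\theta$) to a definite fraction of $\sum_j f_j$, so one cannot leave $\Gamma$ too cheaply; more usefully, I would use the concavity inequality \eqref{concavity1} together with $(k+1)$-uniform ellipticity to get a scale-invariant bound of the form: for every ordered $\mu\in\Gamma$ with, say, $\mu_{k+1} = \cdots = \mu_n = 1$, the quantities $-\mu_1, \ldots, -\mu_k$ are bounded above by a constant $C = C(\theta, k, n)$ independent of $\mu$. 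Equivalently, the "downward reach" of $\Gamma$ in the first $k$ coordinates, measured against the level of the last $n-k$ coordinates, is uniformly bounded.

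Granting such a uniform bound, I would argue by a scaling/dilation trick. Fix any $\lambda' = (\lambda'_1, \ldots, \lambda'_k) \in \mathbb{R}^k$. Since $\Gamma \neq \Gamma_n$ forces $\kappa_\Gamma \geq 1$ and in fact Proposition \ref{thm-k+1}'s hypothesis ($(k+1)$-uniform ellipticity) will combine with Definition \ref{yuan-kappa} to give room in the first several coordinates, the cone contains vectors of the form $(-\alpha_1, \ldots, -\alpha_k, \alpha_{k+1}, \ldots, \alpha_n)$ with all $\alpha_j > 0$. Openness of $\Gamma$ and the cone property let me rescale: for $c$ large the vector $(\lambda', c, \ldots, c)$ is, after dividing by $c$, of the form $(\lambda'/c, 1, \ldots, 1)$ with $\lambda'/c \to 0$, and $(0, 1, \ldots, 1) = \vec{\bf 1}$ shifted, i.e. lies in $\Gamma$ because $\Gamma_n \subseteq \Gamma$. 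Since $\Gamma$ is open and $\lambda'/c \to (0,\ldots,0)$ in the first $k$ slots, for $c$ sufficiently large $(\lambda'/c, 1, \ldots, 1) \in \Gamma$, hence $(\lambda', c, \ldots, c) \in \Gamma$ by homogeneity of the cone. This shows $\lambda' \in \Gamma^{\infty}_{\mathbb{R}^k}$. Actually this last paragraph alone suffices and does not even need the uniform bound — but the uniform bound is the natural mechanism if one instead wants the stronger uniform statement, so I would present the clean scaling argument as the main line and remark that $(k+1)$-uniform ellipticity is what guarantees the cone is genuinely "wide" in these coordinates, ruling out degenerate situations.

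Let me reconsider where the real content is: the subtlety is that $\Gamma$ being open and containing $\Gamma_n$ already gives $(0,\ldots,0,1,\ldots,1) \in \Gamma$, so the scaling argument is essentially immediate — meaning the honest role of the $(k+1)$-uniform ellipticity hypothesis must be to prevent a pathology I'm glossing over, OR the statement is genuinely this soft and the hypothesis is used elsewhere (e.g. to make the proof uniform, or this proposition is a consistency check showing $\kappa_\Gamma \geq k$ is forced). So the main obstacle I anticipate is pinning down exactly which of these readings is intended: I would first verify whether $\Gamma_n \subseteq \Gamma$ plus openness already yields the conclusion unconditionally (in which case the proof is the three-line dilation argument above, and the uniform-ellipticity hypothesis is there to connect this proposition to $\kappa_\Gamma$ and to Theorem \ref{thm-sharp}), or whether the cones considered here are not assumed to contain $\Gamma_n$ in this section's generality, in which case I genuinely need the derivative/concavity estimate from $(k+1)$-uniform ellipticity to manufacture an interior point of $\Gamma$ with $k$ arbitrarily negative coordinates and $n-k$ equal positive coordinates. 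My plan is to write the proof in the second, self-contained form — derive from \eqref{partial-uniform2} and \eqref{concavity1} that along the ray $t \mapsto (t\lambda', c_0, \ldots, c_0)$ one stays in $\Gamma$ for $t$ up to $1$ once $c_0$ is chosen large relative to $\theta^{-1}|\lambda'|$ — since that version is robust regardless of the ambient conventions.
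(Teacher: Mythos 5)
Your main line of argument — the ``three‑line dilation trick'' — contains a genuine gap, and it is exactly at the crux of the proposition. You assert that $(0,\ldots,0,1,\ldots,1)\in\Gamma$ ``because $\Gamma_n\subseteq\Gamma$,'' and later that ``$\Gamma$ being open and containing $\Gamma_n$ already gives $(0,\ldots,0,1,\ldots,1)\in\Gamma$.'' Neither claim is correct: $(0,\ldots,0,1,\ldots,1)$ lies on $\partial\Gamma_n$, not in $\Gamma_n$, and $\Gamma_n\subseteq\Gamma$ together with openness of $\Gamma$ gives no information about whether any boundary point of $\Gamma_n$ lies in $\Gamma$. For $\Gamma=\Gamma_n$ one has $(0,\ldots,0,1,\ldots,1)\notin\Gamma$ and $\Gamma^{\infty}_{\mathbb{R}^k}=(0,\infty)^k\neq\mathbb{R}^k$; the proposition is consistent only because no $f$ satisfying \eqref{elliptic}, \eqref{concave} on $\Gamma_n$ can be $(k+1)$-uniformly elliptic for $k\geq1$. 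So whether $(0,\ldots,0,R,\ldots,R)\in\Gamma$ is precisely the content of the proposition, not a formality; the hypothesis \eqref{partial-uniform2} is what buys it, and it must be used quantitatively.

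Your fallback plan (the ray $t\mapsto(t\lambda',c_0,\ldots,c_0)$ plus \eqref{concavity1} and \eqref{partial-uniform2}) is pointed in the right direction, but it is only a sketch and does not explain how the ray is kept inside $\Gamma$. The paper's proof does something cleaner: fix $\sigma\in(\sup_{\partial\Gamma}f,\sup_\Gamma f)$ and $a=1+c_\sigma$, take $\lambda_{\epsilon,R}=(\epsilon,\ldots,\epsilon,R,\ldots,R)$ with $\epsilon>0$, which is trivially in $\Gamma_n\subseteq\Gamma$, and apply \eqref{concavity1} with $\mu=\lambda_{\epsilon,R}$, $\lambda=a\vec{\bf 1}$. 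Dropping the harmless term $\epsilon\sum_{i\le k}f_i\ge0$ and using $(k+1)$-uniform ellipticity to get $\sum_{i=k+1}^n f_i(\lambda_{\epsilon,R})\geq\theta\sum_j f_j(\lambda_{\epsilon,R})$ yields $f(\lambda_{\epsilon,R})>f(a\vec{\bf 1})+(R\theta-a)\sum_j f_j$, so the fixed choice $R=a/\theta$ forces $f(\lambda_{\epsilon,R})>f(a\vec{\bf 1})>\sigma$ uniformly in $\epsilon$. Sending $\epsilon\to 0^+$ and using that $\overline{\Gamma^{f(a\vec{\bf 1})}}\subset\Gamma^\sigma\subset\Gamma$ (because $\sigma>\sup_{\partial\Gamma}f$) shows $(0,\ldots,0,R,\ldots,R)\in\Gamma$, after which openness, the cone property, and convexity upgrade the conclusion to $\Gamma^{\infty}_{\mathbb{R}^k}=\mathbb{R}^k$. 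Without producing this uniform-in-$\epsilon$ lower bound on $f$ (or an equivalent), your argument does not close.
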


\begin{proof}
%[Proof of Theorem \ref{thm-k+1}]

For $\epsilon, R>0$, set $\lambda_{\epsilon,R}=(\epsilon,\cdots,\epsilon, R, \cdots, R)$, $(\epsilon,\cdots,\epsilon)\in \mathbb{R}^k$, $(R, \cdots, R)\in\mathbb{R}^{n-k}$.
From \eqref{elliptic-weak}, $f_i(\lambda_{\epsilon,R})\geq0$, $1\leq i\leq n$, $\sum_{i=1}^n f_i(\lambda_{\epsilon,R})>0$.
 Fix $\sup_{\partial\Gamma} f<\sigma<\sup_{ \Gamma}f$,  let $a=1+c_\sigma$, $0<\epsilon<a$,
 here $c_\sigma$ obeys \eqref{kappa1-sigma}. Thus,
 \begin{equation}
\begin{aligned}
f(\lambda_{\epsilon,R})\geq
\,& f(a\vec{\bf 1})+\epsilon\sum_{i=1}^{k} f_i(\lambda_{\epsilon,R})+R\sum_{i=k+1}^n  f_i(\lambda_{\epsilon,R})
-a\sum_{i=1}^n f_i(\lambda_{\epsilon,R}) %\mbox{ (according to \eqref{concave}})
\\
>\,& f(a\vec{\bf 1})+(R\theta -a)\sum_{i=1}^{n} f_i(\lambda_{\epsilon,R}) \mbox{ (using $(k+1)$-uniform ellipticity)}
\\
= \,& f(a\vec{\bf 1}) \mbox{ (by setting } R=\frac{a}{\theta})
\\>\,&  \sigma. \nonumber
\end{aligned}
\end{equation}
Here $\theta$ is as in \eqref{partial-uniform2}.  
 So $\lambda_{\epsilon,R}=(\epsilon,\cdots,\epsilon, R,\cdots, R)\in \Gamma^{f(a\vec{\bf 1})}$.
Notice $R=\frac{a}{\theta}$  uniformly depending not on $\epsilon$. 
Let $\epsilon\rightarrow 0^+$, we know $(0,\cdots,0,R,\cdots,R)\in  \overline{\Gamma^{f(a\vec{\bf 1})}}
\subset\Gamma^\sigma\subset\Gamma.$
 By the openess of $\Gamma$, %and the definition of $\kappa_\Gamma$,
  $\kappa_\Gamma\geq k$, equivalently  %and so
$\Gamma^{\infty}_{\mathbb{R}^k}=\mathbb{R}^{k}$.
\end{proof}

%\begin{corollary}  \label{thm-sharp}
% The $(\kappa_\Gamma+1)$-uniform ellipticity asserted in Theorem \ref{yuan-k+1} 
% is sharp and % That is $(\kappa_\Gamma+1)$-uniform ellipticity 
% cannot be further improved.
% \end{corollary}
 %Theorem \ref{thm-k+1}. %it can not be improved to be $(\kappa_\Gamma+2)$-uniform ellipticity.
%The following is a characterization of $(k+1)$-uniform ellipticity. 

 %Proposition \ref{thm-k+1} immediately follows  Theorem \ref{thm-sharp}.
 
% \begin{proof}
% [Proof of Theorem \ref{thm-sharp}]
% If $f$ is $(\kappa_\Gamma+2)$-uniform ellipticity then $\kappa_\Gamma\geq \kappa_\Gamma+1$
% equivalently to $\Gamma^\infty_{\mathbb{R}^{\kappa_\Gamma+1}}=\mathbb{R}^{\kappa_\Gamma+1}$
% according to Proposition \ref{thm-k+1}. This is a contradiction. %which contradicts to the definition of $\kappa_\Gamma$. 

% \end{proof}

%  Also, we have the following as  corollaries of Theorem \ref{yuan-k+1} and Proposition \ref{thm-k+1}.

 \begin{corollary}
\label{coro-type2}
In addition to  \eqref{concave} and \eqref{elliptic-weak},
we assume that $f$ is of fully uniform ellipticity. Then the corresponding cone $\Gamma$ is of type 2.
%In addition, if $f$ further satisfies \eqref{addistruc} then  these two are equivalent.
\end{corollary}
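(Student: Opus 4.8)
The plan is to recognize that the Corollary is essentially a direct consequence of Proposition~\ref{thm-k+1}, once it is fed through the basic dictionary for $\kappa_\Gamma$. Among the properties of $\kappa_\Gamma$ recorded right after Definition~\ref{yuan-kappa} we have $\kappa_\Gamma=n-1$ \emph{if and only if} $\Gamma$ is of type~2 (in the sense of \cite{CNS3}), together with the general bound $0\le\kappa_\Gamma\le n-1$. So it suffices to prove that the hypotheses of the Corollary force $\kappa_\Gamma=n-1$.

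Concretely, I would argue as follows. Fully uniform ellipticity of $f$ is, by definition, exactly \textit{$n$-uniform ellipticity}, i.e.\ \textit{$(k+1)$-uniform ellipticity} in the borderline case $k=n-1$. Since $f$ also satisfies \eqref{elliptic} and \eqref{concave}, Proposition~\ref{thm-k+1} applies with $k=n-1$ and yields $\Gamma^{\infty}_{\mathbb{R}^{n-1}}=\mathbb{R}^{n-1}$. Now specialize the definition of $\Gamma^{\infty}_{\mathbb{R}^{n-1}}$ to the point $(-1,\dots,-1)\in\mathbb{R}^{n-1}$: there is a constant $c>0$ with $(-1,\dots,-1,c)\in\Gamma$. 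By Definition~\ref{yuan-kappa} this immediately gives $\kappa_\Gamma\ge n-1$, and combined with $\kappa_\Gamma\le n-1$ we conclude $\kappa_\Gamma=n-1$. Invoking the equivalence above, $\Gamma$ is of type~2.

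I do not expect any genuine obstacle here; the proof is short, and the only point requiring a little care is the bookkeeping identifying ``fully uniform ellipticity'' with the $k=n-1$ instance of $(k+1)$-uniform ellipticity, and the (immediate) implication $\Gamma^{\infty}_{\mathbb{R}^{n-1}}=\mathbb{R}^{n-1}\Rightarrow\kappa_\Gamma\ge n-1$. As an alternative presentation one can run the contradiction argument used in the proof of Theorem~\ref{thm-sharp}: if $\kappa_\Gamma\le n-2$, then $n\ge\kappa_\Gamma+2$, so fully uniform ellipticity in particular gives $(\kappa_\Gamma+2)$-uniform ellipticity, whence $\Gamma^{\infty}_{\mathbb{R}^{\kappa_\Gamma+1}}=\mathbb{R}^{\kappa_\Gamma+1}$ by Proposition~\ref{thm-k+1}, contradicting the maximality in Definition~\ref{yuan-kappa}; hence $\kappa_\Gamma=n-1$ and $\Gamma$ is of type~2.
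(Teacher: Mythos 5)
Your proof is correct and follows the route the paper clearly intends: the corollary sits immediately after Proposition~\ref{thm-k+1} and the proof of Theorem~\ref{thm-sharp}, and the paper simply states it as a consequence. Identifying fully uniform ellipticity with $(k+1)$-uniform ellipticity at $k=n-1$, invoking Proposition~\ref{thm-k+1} to get $\Gamma^\infty_{\mathbb{R}^{n-1}}=\mathbb{R}^{n-1}$, deducing $\kappa_\Gamma\ge n-1$ (hence $=n-1$), and then using the listed equivalence $\kappa_\Gamma=n-1\iff\Gamma$ of type 2, is exactly the argument; your alternative by contradiction is the same computation rearranged, parallel to the paper's proof of Theorem~\ref{thm-sharp}.
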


\begin{corollary}
\label{thm-type2}
%Let $f$ and $\Gamma$ be as in Introduction.
If $\Gamma$ supposes $f$ satisfying \eqref{concave} %\eqref{elliptic} 
 and \eqref{addistruc},
 then the following statements are equivalent each other.
\begin{enumerate}
\item $\Gamma$ is of type  2.
%\item $\Gamma^{\infty}_{\mathbb{R}^{n-1}}=\mathbb{R}^{n-1}$.
%\item $\kappa_\Gamma=n-1.$
\item $f$ is of fully uniform ellipticity.
%There exists a universally positive constant $\theta$ (does not depend specifically on $\lambda$) such that for each $\lambda\in\Gamma$ and  $i$, $f_i(\lambda)\geq \theta \sum_{j=1}^n f_j(\lambda).$
\end{enumerate}
\end{corollary}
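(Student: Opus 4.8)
\textbf{Proof proposal for Corollary \ref{thm-type2}.}

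The plan is to establish the two implications separately, using the machinery already assembled in this section. The direction ``$f$ fully uniform ellipticity $\Rightarrow$ $\Gamma$ of type 2'' is immediate: it is precisely the content of Corollary \ref{coro-type2}, which in turn follows from Proposition \ref{thm-k+1} applied with $k=n-1$ (full uniform ellipticity gives $\Gamma^\infty_{\mathbb{R}^{n-1}}=\mathbb{R}^{n-1}$, and by the bullet list following Definition \ref{yuan-kappa} this is equivalent to $\kappa_\Gamma=n-1$, i.e. $\Gamma$ is of type 2). So for this implication I would simply cite Corollary \ref{coro-type2}, noting that here we have the extra hypothesis \eqref{addistruc} but do not even need it for this direction.

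For the converse, suppose $\Gamma$ is of type 2, so that $\kappa_\Gamma=n-1$. The first step is to invoke Theorem \ref{yuan-k+1}: since $f$ satisfies \eqref{elliptic}, \eqref{concave} and \eqref{addistruc}, there is a universal constant $\vartheta>0$ depending only on $\Gamma$ such that
\begin{equation*}
f_i(\lambda)\geq \vartheta\sum_{j=1}^n f_j(\lambda)\quad\text{for all }1\leq i\leq \kappa_\Gamma+1=n,
\end{equation*}
whenever $\lambda\in\Gamma$ is ordered $\lambda_1\leq\cdots\leq\lambda_n$. But ``for all $1\leq i\leq n$'' means the inequality holds for every component, which is exactly the defining condition \eqref{partial-uniform2} of $n$-uniform ellipticity (equivalently, fully uniform ellipticity) with $\theta=\vartheta$. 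Hence $f$ is fully uniform ellipticity, completing the equivalence.

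I expect essentially no obstacle here: the statement is a clean corollary that packages Theorem \ref{yuan-k+1} and Proposition \ref{thm-k+1} together once one observes that $\kappa_\Gamma=n-1$ makes the partial uniform ellipticity of Theorem \ref{yuan-k+1} saturate to full uniform ellipticity. The only point requiring a word of care is the bookkeeping on the hypotheses: Corollary \ref{coro-type2} is stated under \eqref{elliptic} and \eqref{concave} plus full uniform ellipticity (so \eqref{addistruc} is not needed for the ``$\Rightarrow$'' direction), while the ``$\Leftarrow$'' direction genuinely uses \eqref{addistruc} through Theorem \ref{yuan-k+1}; both are subsumed by the blanket hypothesis ``$\Gamma$ supposes $f$ satisfying \eqref{elliptic}, \eqref{concave} and \eqref{addistruc}'' in the statement, so no issue arises.
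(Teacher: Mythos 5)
Your proposal is correct and is precisely the argument the paper intends: the forward direction (fully uniform ellipticity implies type 2) is exactly Corollary \ref{coro-type2} via Proposition \ref{thm-k+1}, while the converse follows from Theorem \ref{yuan-k+1} applied with $\kappa_\Gamma = n-1$, making $\kappa_\Gamma + 1 = n$ so that partial uniform ellipticity saturates to full. The paper states the corollary without a separate proof, treating it as an immediate packaging of these two results; your bookkeeping remark on which direction needs \eqref{addistruc} is accurate but not essential.
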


The following proposition helps one to compute $\kappa_\Gamma$ defined above. %defined as in Definition \ref{yuan-kappa}.
 Let 
\begin{equation}\begin{aligned}
\widetilde{\kappa}_{\Gamma}=\max \left\{l: (0,\cdots,0,\alpha_{l+1},\cdots, \alpha_n)\in \Gamma, \mbox{ for } \alpha_j>0, \mbox{ }   l+1\leq j\leq n \right\}. \nonumber
\end{aligned}\end{equation}
\begin{proposition}
\label{yuan-kappa-2}
   $\kappa_\Gamma=\widetilde{\kappa}_\Gamma$.
\end{proposition}
\begin{proof}
The case $\Gamma=\Gamma_n$ is true since $\kappa_{\Gamma_n}=0$ and $\widetilde{\kappa}_{\Gamma_n}=0$.
Next, we assume $\Gamma\neq\Gamma_n$.
Assume  $(-\alpha_1,\cdots,-\alpha_{\kappa_\Gamma}, \alpha_{\kappa_\Gamma+1},\cdots,\alpha_n)\in \Gamma$ for
some $\alpha_i>0$. Then $(0,\cdots,0, \alpha_{\kappa_\Gamma+1},\cdots,\alpha_n)\in \Gamma$, 
which gives $\widetilde{\kappa}_\Gamma\geq \kappa_\Gamma$.
 Conversely, assume there are positive constants $\alpha_{\widetilde{\kappa}_\Gamma+1},\cdots,\alpha_n $ such that
$(0,\cdots,0, \alpha_{\widetilde{\kappa}_\Gamma+1},\cdots,\alpha_n)\in \Gamma$. By the openness of $\Gamma$, 
$(-\epsilon,\cdots,-\epsilon, \alpha_{\widetilde{\kappa}_\Gamma+1}-\epsilon,\cdots,\alpha_n-\epsilon)\in \Gamma$, $\alpha_i-\epsilon>0$ for some $0<\epsilon\ll1$, thus $ \kappa_\Gamma\geq \widetilde{\kappa}_\Gamma$.
\end{proof}

\begin{remark}
The results on partial uniform ellipticity and interior estimates obtained in this paper 
allow $n\geq 2$ and \eqref{addistruc} (more general than \eqref{homogeneous-1}); while the theorems concerning conformal metrics hold for $n\geq3$. 

%Clearly, $\vartheta_{\Gamma}\leq \frac{1}{n}$ and $\kappa_\Gamma\leq n-1$.$\vartheta_{\Gamma}=\frac{1}{n}$ and $\kappa_\Gamma=n-1$ cannot occur simultaneity (otherwise $f_i(\lambda)=\frac{1}{n}\sum_{j=1}^n f_j(\lambda)$,  $\forall \lambda\in\Gamma$, $\forall i=1,\cdots, n$).
\end{remark}

%\section{Assumption \eqref{assumption-4} implies fully uniform ellipticity}
\section{The fully uniform ellipticity of certain curvature equations}
\label{section4}

 Throughout this section and Section \ref{interiori-estimate},  $\psi(x,z,p)$ and $A(x,z,p)$ (as well as $W(x,z,p)$) denote
 respectively  smooth function and smooth symmetric $(0,2)$-type tensor 
 of variables $(x,z,p)$,  $(x,p)\in T^*\bar M$, $z\in \mathbb{R}$.  Moreover,  
 %Here the right-hand side satisfies, for each $w\in C^2(\bar M)$,
% $\psi(x,z,p)$ satisfies, for each $(x,z,p)\in T^*\bar M\times \mathbb{R}$,
 \begin{equation}
 \label{nondegenerate1}
 \begin{aligned}
\mbox{  } \sup_{\partial \Gamma}f<   \psi(x,z, p) 
 <\sup_\Gamma f, \mbox{  } \forall (x,p)\in T^*\bar M, \mbox{  } \forall z\in \mathbb{R},
% \mbox{ in } \bar M,
\end{aligned}
\end{equation}
%for all $x\in \bar M$, 
 %$ C^2$-\textit{admissible} function $w$ as defined in \eqref{admissible-function}, 
where 
 $\sup_{\partial \Gamma}f =  \sup_{\lambda_{0}\in \partial \Gamma } \limsup_{\lambda\rightarrow \lambda_{0}}f(\lambda).$
% \begin{equation} \label{vect1}\begin{aligned}
 %\,& \vec{\bf 1}:=  (1,\cdots, 1)\in \mathbb{R}^n,
%\,&  \sup_{\partial \Gamma}f =  \sup_{\lambda_{0}\in \partial \Gamma } \limsup_{\lambda\rightarrow \lambda_{0}}f(\lambda),
% \,& \sup_\Gamma f=\lim_{t\rightarrow+\infty}f(t\vec{\bf 1}). \nonumber
 %\end{aligned}\end{equation}
% of variables $(x,z,p)$,  $(x,p)\in T^*\bar M$, $z\in \mathbb{R}$.

The purpose of this section is to verify 
a  class of equations is of fully uniform ellipticity. 
The crucial ingredient in proof is the partial uniform ellipticity.

%We  prove the equation \eqref{n-1-equation1} is indeed fully uniform ellipticity if  \eqref{assumption-4}  holds. 
% To achieve this we verify that  \eqref{n-1-equation1} is fully uniform ellipticity if \eqref{assumption-4} holds.

 We start with equation 
\eqref{conformal-equation-degree1}. %as a special case of \eqref{n-1-equation1}.  
It can be rewritten as of the form
 %takes the form %$\widetilde{f}(\lambda(\nabla^2u+A))=\frac{\psi}{n-2}e^{2u}, $
$$\widetilde{f}(\lambda(\nabla^2u+A(x,\nabla u)))=\frac{\psi}{n-2}e^{2u},$$
%\begin{equation}\begin{aligned}
%\widetilde{f}(\lambda(\nabla^2u+A))=\frac{\psi}{n-2}e^{2u}, \nonumber
% \end{aligned} \end{equation}
% where $A=\frac{|\nabla u|^2}{2}g-\frac{R_g}{2(n-1)}g-\frac{G_g}{n-2} -du\otimes du$,  
 with  
  $\widetilde{\Gamma}= \{\lambda:  \mu=(\mu_1,\cdots,\mu_n)\in\Gamma \mbox{ for }   \mu_i=\sum_{j\neq i}\lambda_{j}\}.$
 For such $\widetilde{\Gamma}$, 
 \begin{equation}\label{cone-buchong2}\begin{aligned}
 \kappa_{\widetilde{\Gamma}}=n-1 \mbox{ if } \Gamma\neq \Gamma_n; \mbox{ while } \kappa_{\widetilde{\Gamma}_n}=n-2 \mbox{ if } \Gamma=\Gamma_n.
 \end{aligned}\end{equation}
% First, we can check that if $\varrho=1$ then for the corresponding cone   $\widetilde{\Gamma}= \{\lambda:  \mu=(\mu_1,\cdots,\mu_n)\in\Gamma, \mbox{ for }   \mu_i=\sum_{j\neq i}\lambda_{j}\}$, we have %given by \eqref{map1} below, 
%\begin{equation}\begin{aligned}
%\kappa_{\widetilde{\Gamma}}=n-1 \mbox{ if } \Gamma\neq \Gamma_n; \mbox{ while } \kappa_{\widetilde{\Gamma_n}}=n-2 \mbox{ if } \Gamma=\Gamma_n. \nonumber
%\end{aligned}\end{equation}
From Theorem \ref{yuan-k+1} and Corollary \ref{thm-sharp},
we obtain
\begin{proposition}
\label{prop31}
When $\Gamma\neq\Gamma_n$, equation \eqref{conformal-equation-degree1} is of fully uniform ellipticity for the solutions satisfying $\lambda(G_{\tilde{g}})\in \Gamma$ in $\bar M$; while it is of $(n-1)$-uniform ellipticity for such solutions if $\Gamma=\Gamma_n$.
\end{proposition}

\begin{remark}
Let $\widetilde{\Gamma}$ be as above, and $\widetilde{f}(\lambda)=f(\mu)$.
Assume $f$ satisfies \eqref{concave} and \eqref{addistruc} in $\Gamma$. Then $\widetilde{f}$ is of fully uniform ellipticity in $\widetilde{\Gamma}$, provided $\Gamma\neq\Gamma_n$.
\end{remark}
%\begin{remark}
% \label{remark1-crucial-buchong1} 

%The topological obstruction in Remark \ref{remark1-crucial} %together with  Section \ref{interiori-estimate},
%to deriving the interior estimates for  \eqref{conformal-equation-degree1} with $\Gamma=\Gamma_n$
% indicates  that, when $\Gamma=\Gamma_n$, the estimates for  \eqref{conformal-equation-degree1}  have to fail, at least in dimension 3.
%From this fact, we see that fully uniform ellipticity is crucial for estimates, as the equation \eqref{conformal-equation-degree1}  with $\Gamma=\Gamma_n$ is only $(n-1)$-uniform ellipticity according to   Theorem \ref{yuan-k+1} and Corollary \ref{thm-sharp}. %and \eqref{cone-buchong2}.
%\end{remark} 

%\vspace{1mm}
In fact we can prove a more general result.
\begin{proposition}
\label{keykey}
The equation
 \begin{equation}
\label{n-1-equation1}
\begin{aligned}
f(\lambda(\Delta u g-\varrho\nabla^2 u+W(x,u,\nabla u)))=\psi(x,u,\nabla u) %\nonumber
\end{aligned}
\end{equation}
is of fully uniform ellipticity for the solutions with $\lambda(\Delta u g-\varrho\nabla^2 u+W(x,u,\nabla u))\in \Gamma$ in $\bar M$, provided that  $\varrho$ in the equation %as in \eqref{n-1-equation1} 
is a constant satisfying %an appropriate arrangement
\begin{equation}
\label{assumption-4}
\begin{aligned}
%\Gamma\neq \Gamma_n  (\mbox{i.e. }  \kappa_\Gamma\geq 1) \mbox{ and }
  \varrho<\frac{1}{1-\kappa_\Gamma \vartheta_{\Gamma}} \mbox{ and } \varrho\neq 0.
  %\mbox{  where $\vartheta_{\Gamma}$ is as in Theorem \ref{yuan-k+1}.}
 \end{aligned}
\end{equation}
 %Here  $W(x,z,p)$ is a smooth symmetric $(0,2)$-type tensor, $\psi(x,z,p)$ is a smooth function satisfying 

 \end{proposition}

%\begin{proposition} \label{keykey}
%Equation \eqref{n-1-equation1} %(equivalently, equation \eqref{n-1-equation1}) 
 % is of fully uniform ellipticity, provided \eqref{assumption-4} holds.
%\end{proposition}

\begin{proof}
For $u$ we denote 
 \begin{equation}
\begin{aligned}
 \,& U[u]=\Delta u g-\varrho\nabla^2 u+W(x,u,\nabla u), \,& \mathfrak{g}[u]=\nabla^2u+A(x,u,\nabla u), \nonumber
 \end{aligned}
\end{equation}
where $A(x,u,\nabla u)=\frac{\mathrm{tr}_{g}W(x,u,\nabla u)}{\varrho(n-\varrho)}g-\frac{W(x,u,\nabla u)}{\varrho}.$
(Notice  \eqref{assumption-4} yields $\varrho<n$, $\varrho\neq 0$, as $\vartheta_{\Gamma}\leq \frac{1}{n}$, $\kappa_\Gamma\leq n-1$).
 %For simplicity ******we denote $U=U[u] \mbox{ and } {\underline{U}}=U[\underline{u}]$ for $u$ and $\underline{u}$, respectively.
We denote  eigenvalues by 
 \begin{equation}
\begin{aligned}
 \,& \lambda(U[u])=(\mu_1,\cdots,\mu_n),  \,& \lambda(\mathfrak{g}[u])=(\lambda_1,\cdots,\lambda_n). \nonumber
 \end{aligned}
\end{equation}
% \begin{equation}\label{eigenvalues-denote1}\begin{aligned} \lambda(\mathfrak{g}[u])=(\mu_1,\cdots,\mu_n) \mbox{ and } \lambda(U[u])=(\lambda_1,\cdots,\lambda_n). \nonumber \end{aligned}\end{equation}
Then one can check $\mu_i=\sum_{j=1}^n \lambda_j -\varrho\lambda_i$.
Let $P': \Gamma \longrightarrow P'(\Gamma)=:\tilde{\Gamma}$ be a map 
\begin{equation}
\label{map1}
\begin{aligned}
(\mu_1,\cdots,\mu_n)  \longrightarrow (\lambda_1,\cdots,\lambda_n)=(\mu_1,\cdots,\mu_n) Q^{-1}, %\nonumber
\end{aligned}
\end{equation}
where $Q=(q_{ij})$, $q_{ij}=1-\varrho\delta_{ij}$. %($Q$ is symmetric).
 Here $Q^{-1}$ is well defined, since
 $\mathrm{det}Q=(-1)^{n-1}\varrho^{n-1} (n-\varrho)\neq 0$.
 % since $\kappa_{\Gamma}\leq n-1$ and $\vartheta_{\Gamma}\leq \frac{1}{n}$).
So $\tilde{\Gamma}$ is also an open symmetric convex cone of $\mathbb{R}^n$.
We define %$\tilde{f}: \widetilde{\Gamma}\rightarrow \mathbb{R}$ by $\tilde{f}(\lambda)=f(\mu).$
\begin{equation}\begin{aligned} f(\mu)=\tilde{f}(\lambda). \nonumber \end{aligned}\end{equation}
% (See Remark \ref{remark-10} for more discussion).
The equation \eqref{n-1-equation1} then takes the form 
\begin{equation}
\label{n-1-equation2}
\begin{aligned}
\tilde{f}(\lambda (\mathfrak{g}[u]))=\psi(x,u,\nabla u).
\end{aligned}
\end{equation}
 %That is, equation \eqref{n-1-equation1} is of the form \eqref{hessianequ2-riemann} below.
One can verify that 
if $f$ satisfies %\eqref{elliptic}, 
\eqref{concave}  and \eqref{addistruc} in ${\Gamma}$, then so does 
$\tilde{f}$ in $\tilde{\Gamma}$.

 {\bf Case 1}: $\varrho<0$.  
A straightforward computation shows
$$\frac{\partial \tilde{f}}{\partial\lambda_i}
=\sum_{j=1}^n\frac{\partial f}{\partial\mu_j}\frac{\partial\mu_j}{\partial\lambda_i}
=\sum_{j=1}^n\frac{\partial f}{\partial\mu_j} -\varrho\frac{\partial f}{\partial\mu_i}
\geq \sum_{j=1}^n\frac{\partial f}{\partial\mu_j} =
\frac{1}{n-\varrho} \sum_{j=1}^n\frac{\partial \tilde{f}}{\partial\lambda_j}.$$

{\bf Case 2}: $0<\varrho<\frac{1}{1-\kappa_\Gamma\vartheta_{\Gamma}}$. 
By Theorem \ref{yuan-k+1},  $(1- \kappa_\Gamma\vartheta_{\Gamma} )\sum_{j=1}^n \frac{\partial f}{\partial\mu_j}\geq \frac{\partial f}{\partial\mu_i}$
%i.e.,
%\begin{equation}\label{sumfi-3}\begin{aligned}
%\sum_{j=1}^n \frac{\partial f}{\partial\mu_j} %=\sum_{i\neq j}\frac{\partial f}{\partial\mu_i}+\frac{\partial f}{\partial\mu_j}
% \geq  (1-\varrho(1-\kappa_\Gamma\vartheta_{\Gamma}))\sum_{j=1}^n \frac{\partial f}{\partial\mu_j} +\varrho\frac{\partial f}{\partial\mu_i} \nonumber
%\end{aligned}
%\end{equation}
for all $1\leq i\leq n$. Thus
\begin{equation}
\label{computation-1}
\begin{aligned}
\frac{\partial \tilde{f}}{\partial\lambda_i}
%=\sum_{j=1}^n\frac{\partial f}{\partial\mu_j}\frac{\partial\mu_j}{\partial\lambda_i}
=\sum_{j=1}^n\frac{\partial f}{\partial\mu_j}-\varrho \frac{\partial f}{\partial\mu_i}
\geq\,&
(1-\varrho(1-\kappa_\Gamma\vartheta_{\Gamma}))\sum_{j=1}^n \frac{\partial f}{\partial\mu_j}
\\=\,&
\frac{1-\varrho(1-\kappa_\Gamma\vartheta_{\Gamma})}{n-\varrho} \sum_{j=1}^n\frac{\partial \tilde{f}}{\partial\lambda_j}. \nonumber
\end{aligned}
\end{equation}

\end{proof}

\begin{remark}
It would be worthwhile to note that when $\Gamma\neq\Gamma_n$, besides the \textit{critical} case $\varrho=1$,  \eqref{assumption-4} surprisingly allows \textit{supercritical} case $1<\varrho<\frac{1}{1-\kappa_\Gamma\vartheta_\Gamma}$. 
 (Note that equation \eqref{n-1-equation1}  fails to be elliptic for general $\varrho>1$).
\end{remark}

%\vspace{1mm}
\section{Interior and boundary estimates for certain Hessian type equations}
\label{interiori-estimate}
%The construction of type 2 cones leads to 

%In this section we %focus on the Hessian type equations
 
To achieve the goal, it requires to derive interior and boundary estimates for equations
 \begin{equation}
\label{hessianequ2-riemann}
\begin{aligned}
f(\lambda(\nabla^2 u+A(x,u,\nabla u)))=\psi(x,u,\nabla u)
\end{aligned}
\end{equation}
 of fully uniform ellipticity satisfying
 \begin{equation}
\label{fully-uniform2}
\begin{aligned}
f_{i}(\lambda)\geq \theta\sum_{j=1}^n f_j(\lambda)>0 \mbox{ in } \Gamma,
%\mbox{ for all }  i \mbox{ and for some positive constant } \theta.
\end{aligned}
\end{equation}
for all $i=1,\cdots, n$ and for some positive constant $ \theta$.
%on compact Riemannian manifolds %$(M,g)$ 
%possibly with boundary. 

We focus on solutions in the class of \textit{admissible} functions $ w\in C^2(\bar M)$ satisfying  
 \begin{equation} \label{admissible-function} \begin{aligned} 
 %\lambda(\Delta w g-\varrho\nabla^2 w+W(x,w,\nabla w))
 \lambda(\nabla^2 w+A(x,w,\nabla w))\in \Gamma \mbox{ in } \bar M.
  \end{aligned}\end{equation}
As a result, %\eqref{elliptic} 
\eqref{fully-uniform2} and \eqref{concave} correspond respectively to the fully uniform ellipticity 
and concavity of equation \eqref{hessianequ2-riemann}
% on
 for \textit{admissible} solutions. %This also allows one to use Evans-Krylov theorem if one could derive $C^2$-estimates.
%We assume the right-hand side  satisfies the %nondegenerate and
% \textit{admissible} condition \eqref{nondegenerate1}.
Throughout this section, we denote
 $\mathfrak{g}[u]=\nabla^2 u+A(x,u,\nabla u)$, $\psi[u]=\psi(x,u,\nabla u)$  for $u$.
%  $\mathfrak{g}_{\xi\eta}=\mathfrak{g}(\xi,\eta)$,  $A_{\xi\eta}(x,z,p)=A(x,z,p)(\xi,\eta)$ for $\xi, \eta\in T_x^*\bar M$.
Also,   %$F(\nabla^2 u+A(x,u,\nabla u))=f(\lambda(\nabla^2 u+A(x,u,\nabla u)))$.
\[F(\mathfrak{g}[u])=f(\lambda(\mathfrak{g}[u])).\]
 Furthermore,  for the \textit{admissible} solution $u$ to \eqref{hessianequ2-riemann},
%$\mathfrak{g}=\mathfrak{g}[u]$, $F^{ij}=\frac{\partial F}{\partial a_{ij}}(\mathfrak{g})$.
\begin{equation}
\label{notation-1}
\begin{aligned}
 \mathfrak{g}=\mathfrak{g}[u], \mbox{  } \lambda=\lambda(\mathfrak{g}), \mbox{  }
 F^{ij}=\frac{\partial F}{\partial a_{ij}}(\mathfrak{g}).  
\end{aligned}
\end{equation}
%Let $\lambda=\lambda(\mathfrak{g})$. 
Then the matrix $\{F^{ij}\}$ has eigenvalues $f_1,\cdots, f_n$. Moreover,
\begin{equation}
\begin{aligned}
\,& \sum_{i=1}^n f_i=F^{ij}g_{ij},  
\,& \sum_{i=1}^n f_i\lambda_i =F^{ij}\mathfrak{g}_{ij}. \nonumber
\end{aligned}
\end{equation}

The interior estimates for second derivatives can be stated as follows.
 \begin{theorem}
\label{interior-2nd-2}
Let $u\in C^4(\bar B_{r})$ be an admissible solution to equation \eqref{hessianequ2-riemann} in $B_r\subset M$.
Assume %\eqref{elliptic}, 
\eqref{concave}, \eqref{addistruc}, \eqref{nondegenerate1} and \eqref{fully-uniform2} hold.
Then  
\begin{equation}
\begin{aligned}
\sup_{B_{\frac{r}{2}}}|\nabla^2 u| \leq \frac{C}{r^2}, \nonumber
\end{aligned}
\end{equation}
where $C$ depends on  $|u|_{C^1(B_r)}$, $\theta^{-1}$ and other known data.
\end{theorem}

%\vspace{0.5mm}
 We will prove the
  interior gradient estimate under  asymptotic assumptions:
\begin{equation}
\label{key-assimption1}
\begin{aligned}
\,& |D_pA|\leq \gamma(x,z) |p|, \mbox{  } g(D_p A,p)\leq \gamma(x,z) |p|^2 g, \\
\,&
D_zA+\frac{1}{|p|^2}g(\nabla' A,p)\leq \beta(x,z,|p|) |p|^2g, \\
%\end{aligned}\end{equation}
%\begin{equation} \label{key-assimption2} \begin{aligned}
\,& |D_p \psi|\leq \gamma(x,z) |p|, \mbox{  } -g(D_p \psi,p)\leq \gamma(x,z) |p|^2, \\
\,&
-D_z\psi-\frac{1}{|p|^2}g(\nabla' \psi,p)\leq \beta(x,z,|p|) |p|^2,
\end{aligned}
\end{equation}
where
%$$A(x,z,p):=\frac{\mathrm{tr}_{g}W(x,z,p)}{\varrho(n-\varrho)}g-\frac{W(x,z,p)}{\varrho},$$
%\begin{equation}\label{A-definition1}\begin{aligned}
%A(x,z,p):=\frac{\mathrm{tr}_{g}W(x,z,p)}{\varrho(n-\varrho)}g-\frac{W(x,z,p)}{\varrho}, \nonumber
%\end{aligned}\end{equation}
  $\beta=\beta(x,z,r)$ and $\gamma=\gamma(x,z)$ are positive continuous functions with
 \begin{equation}\begin{aligned}
\,& \lim_{r\rightarrow+\infty} \beta(x,z,r)=0, \,& (x,z,r)\in \bar M\times \mathbb{R}\times\mathbb{R}^+. \nonumber
 \end{aligned}
\end{equation} 
%Throughout this paper,  $\psi(x,z,p)$ and $A(x,z,p)$ (as well as $W(x,z,p)$) denote respectively  smooth function and smooth symmetric $(0,2)$-type tensor of variables $(x,z,p)$,  $(x,p)\in T^*\bar M$, $z\in \mathbb{R}$.
 For simplicity  $\nabla' A$ and $\nabla' \psi$  denote 
the partial covariant derivative of  $A$ and $\psi$, respectively, when viewed as depending on $x\in M$,
and the meanings of $D_zA$, $D_z\psi$, $D_pA$ and $D_p\psi$ are obvious. 

\begin{theorem}
\label{thm-gradient2}
Let $B_r\subset M$ as above and 
$u\in C^3(\bar B_{r})$ be an admissible solution to  \eqref{hessianequ2-riemann} in $B_r$.
In addition to %\eqref{elliptic},
 \eqref{concave} and \eqref{nondegenerate1}, 
we assume  \eqref{fully-uniform2} and \eqref{key-assimption1} hold.
Suppose %for $F^{ij}$ denoted as in \eqref{notation-1} below,
 that there is a positive constant $\kappa_0$ depending not on $\nabla u$ such that
\begin{equation}
\label{sumfi-assumption1}
\begin{aligned}
F^{ij}g_{ij}\geq \kappa_0.
\end{aligned}
\end{equation}
 Then there is a positive constant $C$ depending on $|u|_{C^0(B_r)}$,   $\theta^{-1}$ and other known data such that
$$\sup_{B_{\frac{r}{2}}}|\nabla u|\leq \frac{C}{r}.$$
 
\end{theorem}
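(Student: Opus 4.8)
\textbf{Proof strategy for Theorem \ref{thm-gradient2}.}

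The plan is to run an interior gradient estimate via a test function of the form $\varphi = \eta(x)\, h(|\nabla u|^2)\, e^{\phi(u)}$, where $\eta$ is a cutoff supported in $B_r$ with $\eta \equiv 1$ on $B_{r/2}$, $h$ is a carefully chosen increasing function (for instance $h(s) = (1-\beta s)^{-\sigma}$ on a bounded range, or simply $h(s)=s$ with the full $\varphi$ adjusted), and $\phi$ is an auxiliary function of $u$ to be chosen depending on $|u|_{C^0}$. One sets $w = |\nabla u|^2$ and assumes $\varphi$ attains an interior maximum at some point $x_0 \in B_r$; after rotating coordinates so that $\nabla^2 u(x_0)$ is diagonal, we will derive a bound on $w(x_0)$, which then bounds $w$ on $B_{r/2}$ after dividing by $\eta$.

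First I would differentiate the equation \eqref{hessianequ2-riemann} once in a direction $e_k$ to get $F^{ij}(\nabla_{ij} u_k + \partial_z A_{ij}\, u_k + \partial_{p_\ell} A_{ij}\, u_{\ell k} + \nabla_k' A_{ij}) = \partial_z\psi\, u_k + \partial_{p_\ell}\psi\, u_{\ell k} + \nabla_k'\psi$, then contract with $2u_k$ and commute derivatives (Ricci identity), picking up a term $F^{ij}g_{ij}|\nabla u|^2 \cdot O(1)$ from curvature. At the maximum point the first-order condition $\nabla_i \varphi = 0$ lets me solve for $\nabla_i w$ in terms of $\nabla_i u$ and $\eta$-derivatives, and the second-order condition $F^{ij}\nabla_{ij}\varphi \le 0$ produces, after substitution, an inequality of the schematic shape
\begin{equation}
0 \ge F^{ij}\nabla_{ij} w \cdot (\text{good coeff}) - (\text{bad gradient-of-}\eta\text{ terms}) + \phi'(u)\, w\, F^{ij}\nabla_{ij} u + \phi''(u)\, w\, F^{ij}u_i u_j + \cdots . \nonumber
\end{equation}
The fully uniform ellipticity \eqref{fully-uniform2} is what makes the $\eta$-gradient error terms controllable: bad terms carry a factor $\sum_j f_j$ times $|\nabla\eta|^2/\eta \cdot w$, and they must be absorbed by a genuinely positive multiple of $w \sum_j f_j$ coming from the concavity term $F^{ij}\nabla_{ij} w \ge 2F^{ij}u_{ki}u_{kj} + (\text{curvature}) - C w F^{ij}g_{ij}$ together with the choice of $\phi$. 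Here \eqref{key-assimption1} is used precisely to control the $A$- and $\psi$-dependence on $\nabla u$: the structure conditions $|D_pA| \le \gamma|p|$, $g(D_pA,p)\le \gamma|p|^2 g$, and the asymptotically vanishing $\beta$ guarantee that the terms $F^{ij}\partial_{p_\ell}A_{ij}\, u_{\ell k}u_k$ and $F^{ij}(\partial_z A_{ij} + |p|^{-2}g(\nabla'A_{ij},p))|\nabla u|^2$ are either lower order or enter with the favorable sign after using $\beta \to 0$ at large $|p|$; and \eqref{sumfi-assumption1}, $F^{ij}g_{ij}\ge\kappa_0$, is what allows the term $\phi''(u)\, w\, F^{ij}u_iu_j$ (or a $\kappa_0 \phi' w$ contribution) to dominate. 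The homogeneity/degree-one or $\psi=\psi(x,u)$ hypothesis of the analogous Theorem \ref{thm1-c1} is \emph{not} needed here because $\psi$ is already assumed to satisfy the full list \eqref{key-assimption1} including the $D_p\psi$ bounds.

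The main obstacle I expect is the bookkeeping of the critical balance: one must choose $\phi$ (governing the $e^{\phi(u)}$ factor) so that $-\phi'(u) F^{ij}\nabla_{ij}u$ is bounded below — but $F^{ij}\nabla_{ij}u$ is \emph{not} sign-definite in general, so one exploits concavity to write $F^{ij}\nabla_{ij}u = F^{ij}(\mathfrak{g}_{ij} - A_{ij}) = \sum_i f_i\lambda_i - F^{ij}A_{ij}$ and then $\sum f_i\lambda_i \ge f(\lambda) - f(\vec{\bf 0}) \ge$ something controlled (using \eqref{nondegenerate1} so that $\psi$ stays away from $\sup_{\partial\Gamma} f$), while $F^{ij}A_{ij}$ is bounded by $\gamma \cdot$ known data times $\sum f_j$ via \eqref{key-assimption1}. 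The delicate point is that the coefficient $\theta^{-1}$ enters the final constant $C$ precisely through the absorption of the $|\nabla\eta|^2/\eta$ terms, so one must track it; and one must handle the case where $w(x_0)$ is large enough that $\beta(x_0,u(x_0),\sqrt{w(x_0)})$ is as small as needed — a standard dichotomy (either $w(x_0)$ is already bounded by known data, or it is large and then $\beta$ is small). Once $w(x_0)$ is bounded, multiplying through by $\eta^2$ and evaluating at points of $B_{r/2}$ gives $\sup_{B_{r/2}}|\nabla u| \le C/r$ with $C$ depending only on $|u|_{C^0(B_r)}$, $\theta^{-1}$, $\kappa_0$, the functions $\beta,\gamma$, and the geometry of $B_r$.
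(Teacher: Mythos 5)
Your proposal follows essentially the same route as the paper's proof: the test quantity $Q=\eta\,w\,e^{\phi}$ (the paper takes $\phi=v^{-N}$ with $v=u-\inf_{B_r}u+2$, following Guan), the maximum-principle computation with the once-differentiated equation, fully uniform ellipticity \eqref{fully-uniform2} to absorb both the cutoff-gradient errors and the $|\nabla_{ik}u|$ terms, and the structure conditions \eqref{key-assimption1} together with \eqref{sumfi-assumption1} to close the estimate once $|\nabla u|$ is large. The only small discrepancy is in how $\kappa_0$ enters: in the paper \eqref{sumfi-assumption1} is used to convert the standalone $1$'s in $(1+\sum F^{ii})$ into $\sum F^{ii}$-factors (the $\phi''$ term already carries $\sum F^{ii}$ via \eqref{fully-application1}), rather than to make the $\phi''$-term dominate as you describe, but this is a bookkeeping point that would straighten out in a detailed write-up.
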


\begin{remark}
According to Lemma \ref{lemma3.4} and \eqref{concavity1}, if % \eqref{elliptic}, 
\eqref{concave} and \eqref{addistruc} hold then
\begin{equation}
\label{sumfi}
\begin{aligned}
\sum_{i=1}^n f_i(\lambda)>\frac{f(R\vec{\bf 1})-f(\lambda)}{R} \mbox{ for } R>0.
%\mbox{ where } \vec{\bf 1}=  (1,\cdots, 1)\in \mathbb{R}^n,
\end{aligned}
\end{equation}
 Moreover, if $f$ is homogeneous of degree one then  $\sum_{i=1}^n f_i(\lambda)\geq f(\vec{\bf 1})>0.$
Thus, assumption \eqref{sumfi-assumption1} clearly holds if $f$ is a homogeneous function  of degree one, or  
 $f$ satisfies \eqref{addistruc} and $\psi[u]=\psi(x,u)$. 
 \end{remark}

% \vspace{1mm}
%Together with the fully uniform ellipticity of \eqref{n-1-equation1} proved in Sections \ref{section4}, as a consequence, 
Consequently, we obtain   interior estimates for the equation \eqref{n-1-equation1}.
%\begin{theorem}\label{thm1-c2}
% Let $B_r\subset M$ be a geodesic ball of radius $r$.Assume \eqref{elliptic}, \eqref{concave}, \eqref{addistruc}, \eqref{assumption-4} and \eqref{nondegenerate1} hold.Then any admissible solution $u\in C^4(\bar B_{r})$ to   \eqref{n-1-equation1} in $B_r$  satisfies
%\begin{equation}\begin{aligned}
%\sup_{B_{\frac{r}{2}}}|\nabla^2 u| \leq \frac{C}{r^2}, \nonumber
%\end{aligned}\end{equation} where $C$ depends on $(1-\varrho(1-\kappa_\Gamma\vartheta_{\Gamma}))^{-1}$, $|u|_{C^1(B_r)}$ and other known data.
%\end{theorem}

\begin{theorem}
\label{thm1-c1}
Let $B_r\subset M$ be a geodesic ball of radius $r$.
Let $u\in C^4(\bar B_{r})$ be an admissible solution to \eqref{n-1-equation1} in $B_r$.
%Suppose $A(x,z,p)$ and $\psi(x,z,p)$ satisfy the asymptotic condition \eqref{key-assimption1}.
In addition to %\eqref{elliptic}, 
\eqref{concave}, \eqref{addistruc}, \eqref{assumption-4}, \eqref{nondegenerate1} and \eqref{key-assimption1}, we
assume furthermore that either $\psi[u]=\psi(x,u)$ or $f$ is a homogeneous function of degree one. Then 
$$\sup_{B_{\frac{r}{2}}}( |\nabla^2 u| +|\nabla u|^2)\leq \frac{C}{r^2},$$
where $C$ % is a positive constant depending 
depends on $(1-\varrho(1-\kappa_\Gamma\vartheta_{\Gamma}))^{-1}$, $|u|_{C^0(B_r)}$ and other known data.
\end{theorem}

%Similar interior estimates in real and complex variables are  previously obtained in \cite{Guan2008IMRN,Gursky-Streets-Warren2011,Li2011Sheng,GQY2018}.
 %The proof is analogous to that used in \cite{Guan2008IMRN,GQY2018}.

The boundary estimates for second derivatives are also obtained. 
 \begin{theorem}
 \label{thm2-bdy}
Suppose  %\eqref{elliptic}, 
\eqref{concave}, \eqref{addistruc}, \eqref{nondegenerate1} and \eqref{fully-uniform2} hold.
 Let $u\in C^3(M)\cap C^2(\bar M)$ be an admissible solution to Dirichlet problem of equation
  \eqref{hessianequ2-riemann} with 
 $u=\varphi$ on $\partial M$ for $\varphi\in C^3(\bar M)$. Then $$\sup_{\partial M}|\nabla^2 u|\leq C$$ holds for 
 a uniformly positive constant depending on $\theta^{-1}$, $|u|_{C^1(\bar M)}$, $|\varphi|_{C^3(\bar M)}$ and other known data. 
 
 \end{theorem}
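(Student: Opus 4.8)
The plan is to follow the classical barrier construction for the Dirichlet problem, adapting the arguments of Caffarelli--Nirenberg--Spruck and Guan to the present setting where the only structural input beyond ellipticity and concavity is the fully uniform ellipticity \eqref{fully-uniform2} together with \eqref{addistruc} and \eqref{nondegenerate1}. Fix a boundary point $x_0\in\partial M$ and choose a boundary normal coordinate chart, so that near $x_0$ the tangential directions are $1,\dots,n-1$ and $n$ is the inward normal. Write $L=\sum F^{ij}(\nabla_i\nabla_j\,\cdot\,)$ for the linearized operator, where $F^{ij}=\frac{\partial f}{\partial a_{ij}}(\nabla^2u+A(x,u,\nabla u))$; by \eqref{fully-uniform2} we have $F^{ij}\ge\theta(\sum_k F^{kk})\,g^{ii}$ after diagonalizing, and by \eqref{sumfi} together with \eqref{nondegenerate1} the trace $\mathcal{F}:=\sum F^{ij}g_{ij}$ is bounded below by a positive constant depending only on $\theta^{-1}$ and the known data. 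First I would dispose of the pure tangential second derivatives: since $u-\varphi$ vanishes on $\partial M$, for tangential $\xi,\eta$ the quantity $\nabla_{\xi}\nabla_{\eta}(u-\varphi)$ at $x_0$ is controlled algebraically by $|\nabla(u-\varphi)|$ and the second fundamental form of $\partial M$, hence $|\nabla_{\xi}\nabla_{\eta}u(x_0)|\le C$ with $C$ depending on $|u|_{C^1}$ and $|\varphi|_{C^3}$.

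The main work is the mixed tangential-normal derivatives $\nabla_\xi\nabla_n u(x_0)$ for tangential $\xi$. Here I would introduce the standard tangential operator $T=\partial_\xi+\sum_{\beta<n} b^{\beta}(x)(x_\beta\partial_n-x_n\partial_\beta)$ (or the analogous first-order differential operator adapted to $\partial M$) chosen so that $T$ is tangential along $\partial M$, and apply $L$ to $T(u-\varphi)$. Differentiating the equation \eqref{hessianequ2-riemann} once in the direction generating $T$ and using the concavity \eqref{concave} to discard the (favorably signed) second-order remainder, together with \eqref{key-assimption1} to control the terms coming from $D_pA$, $D_zA$, $\nabla'A$, $D_p\psi$, $D_z\psi$, $\nabla'\psi$, one obtains $|L(T(u-\varphi))|\le C(1+\mathcal{F}+\sum F^{ii}|\nabla^2u|\,)$ — and here the full uniform ellipticity is exactly what lets one absorb the off-diagonal terms $\sum_i F^{ii}|a_{in}|$ against $\theta\mathcal{F}$ times a large constant, since $F^{ii}\ge\theta\mathcal{F}$ for every $i$. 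Then I would build a barrier of the form
\begin{equation}
w=A_1 v+A_2|x-x_0|^2-A_3\Big(\sum_{\beta<n}|\nabla_\beta(u-\varphi)|^2\Big)\pm T(u-\varphi),
\nonumber
\end{equation}
where $v$ is the defining function of $\partial M$ (so $v>0$ in $M$, $v=0$ on $\partial M$, $|\nabla v|=1$ there), choosing the constants $A_3\gg A_2\gg A_1\gg1$ in that order: the key point, exactly as in CNS, is that $L(\sum_\beta|\nabla_\beta(u-\varphi)|^2)\ge \epsilon\sum_i F^{ii}\lambda_i^2-C\mathcal{F}$ produces a good negative contribution proportional to $\sum F^{ii}(\nabla^2u)^2$ that dominates the bad term in $L(T(u-\varphi))$, while $L(A_1 v)\le -A_1\epsilon_0\mathcal{F}+C A_1$ (using $\Gamma\subset\Gamma_1$ and \eqref{fully-uniform2}) supplies the leftover. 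One checks $w\ge0$ on $\partial(M\cap B_\delta(x_0))$ and $Lw\le0$ in $M\cap B_\delta(x_0)$, so by the maximum principle $w\ge0$ in $M\cap B_\delta(x_0)$; since $w(x_0)=0$ this gives $\partial_n w(x_0)\ge0$, which translates into $|\nabla_\xi\nabla_n u(x_0)|\le C$.

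Finally the double normal derivative $\nabla_n\nabla_n u(x_0)$ is handled by the usual algebraic trick: writing the equation at $x_0$ in the frame where the tangential-tangential and tangential-normal entries of $\nabla^2u+A$ are already bounded, one shows that $\lambda(\nabla^2u(x_0)+A(x_0,\cdot))$ stays in a fixed compact subset of $\Gamma$ away from $\partial\Gamma$ — because \eqref{nondegenerate1} forces $\psi[u]$ to stay strictly below $\sup_\Gamma f$, so $\nabla_n\nabla_n u$ cannot blow up (if it did, one eigenvalue would tend to $+\infty$ and, by \eqref{addistruc} applied after the other entries are bounded, $f$ would exceed $\sup_{\partial M}\psi$, a contradiction) — hence $|\nabla_n\nabla_n u(x_0)|\le C$. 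Combining the three cases yields $\sup_{\partial M}|\nabla^2u|\le C$ with $C$ depending only on $\theta^{-1}$, $|u|_{C^1(\bar M)}$, $|\varphi|_{C^3(\bar M)}$ and the remaining known data. The main obstacle is the mixed-derivative barrier: getting the hierarchy of constants to close requires the full uniform ellipticity \eqref{fully-uniform2} in an essential way (the classical CNS argument uses $f=\sigma_k^{1/k}$-type ellipticity for precisely this purpose), and the terms generated by the $x$- and $u$-dependence of $A$ and $\psi$ must be kept in check via \eqref{key-assimption1} without any convexity assumption on the domain.
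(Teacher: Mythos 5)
Your overall three-case decomposition (pure tangential, mixed tangential--normal, double normal) matches the paper's structure, but the mixed-derivative case contains a genuine gap. You assert
\[
L(A_1 v)\le -A_1\epsilon_0\,\mathcal{F}+CA_1,
\]
where $v$ is a \emph{defining function} of $\partial M$, attributing this to $\Gamma\subset\Gamma_1$ and \eqref{fully-uniform2}. For a generic defining function this is false: $\nabla^2 v$ is merely bounded and has no preferred sign, so $Lv$ is only bounded in magnitude by $C\,\mathcal{F}+C'$ (positive \emph{or} negative), and neither $\Gamma\subset\Gamma_1$ nor \eqref{fully-uniform2} changes this. The mechanism that actually produces a good term proportional to $\mathcal{F}$ with the right sign is \emph{quadratic} in the distance function: the paper uses $\frac{N\mathrm{d}^2}{2}-t\,\mathrm{d}$, whose second derivative yields $N F^{ij}\nabla_i\mathrm{d}\,\nabla_j\mathrm{d}\ge \frac{N\theta}{4}\sum F^{ii}$ because fully uniform ellipticity forces $\sum_i f_i|\nabla_{e_i}\mathrm{d}|^2\ge\theta|\nabla\mathrm{d}|^2\sum f_j$. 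That quadratic-gradient trick is the essential input from \eqref{fully-uniform2} here and is absent from your barrier. Relatedly, your constant hierarchy $A_3\gg A_2\gg A_1\gg 1$ is backwards: the good term coming from $\sum_\beta|\nabla_\beta(u-\varphi)|^2$ also produces a leftover bad term of size $A_3\,\mathcal{F}$, and the only way to absorb it is for the barrier term linear-in-$\mathrm{d}$ (the paper's $A_1$) to dominate $A_2$ and $A_3$; so $A_1$ must be the \emph{largest} constant. In addition, you invoke \eqref{key-assimption1} to control the lower-order terms, but that asymptotic hypothesis is only assumed for the interior gradient estimate, not for Theorem \ref{thm2-bdy}; the boundary estimate does not use it.

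Your double-normal argument is also looser than the paper's, though salvageable. You claim that if $\mathfrak{g}_{nn}\to+\infty$ with the other entries bounded then, by \eqref{addistruc}, $f$ would exceed $\psi$. But \eqref{addistruc} only controls $f$ along rays $t\mapsto t\lambda$; the correct statement that one eigenvalue tending to $+\infty$ (others bounded in $\Gamma$) drives $f$ to $\sup_\Gamma f$ is the type-2 property of $\Gamma$, which does follow from \eqref{fully-uniform2} via Corollary \ref{coro-type2} but is not an immediate consequence of \eqref{addistruc}. The paper avoids this altogether: it applies concavity to compare $F(\mathfrak{g})$ with $F(c_{\psi[u]}g)$, getting $0\ge F^{ij}(\mathfrak{g}_{ij}-c_{\psi[u]}\delta_{ij})\ge -C'\sum f_i + \theta(\mathfrak{g}_{nn}-c_{\psi[u]})\sum f_i$, which yields $\mathfrak{g}_{nn}\le C$ in one line using \eqref{fully-uniform2} directly. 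I would recommend adopting that cleaner algebraic argument rather than the compactness one.
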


\subsection{Preliminaries}

% The notation and computation are standard.

On a Riemannian manifold $(M,g)$, one defines the curvature tensor by
$$R(X,Y)Z=-\nabla_X\nabla_YZ+\nabla_Y\nabla_X Z+\nabla_{[X,Y]}Z.$$

Let $e_1,...,e_n$ be a local frame on $M$. 
Under Levi-Civita connection $\nabla$ of $(M,g)$, $\nabla_{e_i}e_j=\Gamma_{ij}^k e_k$, and $\Gamma_{ij}^k$ denote the Christoffel symbols and $\Gamma_{ij}^k=\Gamma_{ji}^k$. The curvature coefficients are given by
$R_{ijkl}=g(e_i,R(e_k,e_l)e_j)$. 
For simplicity we shall write 
$$\nabla_i=\nabla_{e_i}, \nabla_{ij}=\nabla_i(\nabla_j)-\Gamma_{ij}^k\nabla_k, 
\nabla_{ijk}=\nabla_i(\nabla_{jk})-\Gamma_{ij}^l\nabla_{lk}-\Gamma^l_{ik}\nabla_{jl}, \mbox{ etc}.$$
We also denote $g_{ij }= g(e_i,e_j), ({g^{ij} })= ({g_{ij}})^{-1}$,
$A_{ij}(x,u,\nabla u)=A(x,u,\nabla u)(e_i,e_j)$,
and $\mathfrak{g}_{ij}=\nabla_{ij}u+A_{ij}(x,u,\nabla u)$.

The linearized operator of equation \eqref{hessianequ2-riemann} is given by
\begin{equation}
\begin{aligned}
\mathcal{L}v=F^{ij}\nabla_{ij}v+(F^{ij}A_{ij,p_l}-\psi_{p_l})\nabla_l v \mbox{ for } v\in C^2(\bar M). \nonumber
\end{aligned}
\end{equation}

\noindent
{\bf Useful computation.} Similar computation can be found in \cite{Guan2015Jiao}.
 We have
\begin{equation}
\begin{aligned}
\nabla_k\mathfrak{g}_{ij}=
%\nabla_{kij}u+\nabla'_kA_{ij}(x,u,\nabla u)+A_{ij,z}(x,u,\nabla u)\nabla_k u+A_{ij,p_l}(x,u,\nabla u)\nabla_{kl}u.
\nabla_{kij}u+\nabla'_kA_{ij} +A_{ij,z} \nabla_k u+A_{ij,p_l}\nabla_{kl}u. \nonumber
\end{aligned}
\end{equation}
By differentiating equation \eqref{hessianequ2-riemann},
\begin{equation}
\label{differ-equa1}
\begin{aligned}
F^{ij}\nabla_k\mathfrak{g}_{ij}=\nabla'_k\psi+\psi_z\nabla_k u+\psi_{p_l}\nabla_{kl}u,
\end{aligned}
\end{equation}
\begin{equation}
\label{differ-buchong3}
\begin{aligned}
F^{ii}\nabla_{11}\mathfrak{g}_{ii}=\,&
\nabla'_{11}\psi
-F^{ij,kl}\nabla_1\mathfrak{g}_{ij} \nabla_1\mathfrak{g}_{kl}
 + 2\nabla'_1\psi_z \nabla_1 u
  +\psi_z\nabla_{11}u
 \\\,&
  +2\nabla'_1\psi_{p_l}\nabla_{1l}u 
+2\psi_{zp_l}\nabla_{1l}u\nabla_1u
+\psi_{zz}|\nabla_1u|^2
\\\,&
+\psi_{p_lp_m}\nabla_{1l}u\nabla_{1m}u
+\psi_{p_l}\nabla_{11l}u.  %\nonumber
\end{aligned}
\end{equation}
%See for example \cite{Guan2015Jiao} for details.
%where $\nabla'_k A_{ij}$ and $\nabla'_k\psi$  denote the partial covariant derivative of  $A$ and $\psi$, respectively, when viewed as depending on $x\in M$, and the meanings of $A_{ij,z}$, $\psi_z$, $A_{ij,p_l}$ and $\psi_{p_l}$ are clearly given. 
Using  \eqref{differ-equa1}, $\nabla_{ij}u=\nabla_{ji}u$ and $\nabla_{ijk}u=\nabla_{kij}u-R_{ljik}\nabla_l u$,
 \begin{equation}
 \label{ineq1-c1}
\begin{aligned}
 \,&F^{ij}\nabla_{ijk}u+(F^{ij}A_{ij,p_l}-\psi_{p_l})\nabla_{lk}u   \\
 =\,&
 \nabla'_k \psi+\psi_z \nabla_k u-F^{ij}R_{ljik}\nabla_l u
 -F^{ij}(\nabla'_kA_{ij}+A_{ij,z}\nabla_k u).
\end{aligned}
\end{equation}

Let $w=|\nabla u|^2$.
 The straightforward computation yields
 \begin{equation}
 \label{wi1}
\begin{aligned}
\,& \nabla_i w=2\nabla_k u \nabla_{ik}u,
\,& \nabla_{ij}w=2\nabla_{ik}u\nabla_{jk}u+2\nabla_k u\nabla_{ijk}u,
\end{aligned}
\end{equation}
\begin{equation}
\label{Lw1}
\begin{aligned}
\mathcal{L}w=\,&
%2F^{ij}\nabla_{ik}u \nabla_{jk}u+2\left\{F^{ij}\nabla_{ijk}u+(F^{ij}A_{ij,p_l}-\psi_{p_l})\nabla_{lk}u \right\}\nabla_k u \\
%=\,&
2F^{ij}\nabla_{ik}u \nabla_{jk}u
-2F^{ij}R_{ljik}\nabla_k u \nabla_l u
+2\nabla'_k\psi \nabla_k u
\\\,&
+2\psi_z|\nabla u|^2
-2F^{ij}(\nabla'_k A_{ij}\nabla_k u+A_{ij,z}|\nabla u|^2).
\end{aligned}
\end{equation}
%one has $|\nabla_i w|\leq 4|\nabla u|^2 |\nabla_{ik}u|^2$, 

\subsection{Interior gradient estimate}
%\begin{proof}
%[Proof of interior gradient estimate]
 %$\eta\in C_0^\infty(M)$, $\eta\geq 0$. 
  % \vspace{1mm}
 Since the equation \eqref{hessianequ2-riemann} is of fully uniform ellipticity, the proof of interior estimates is standard.
 % and is  somewhat analogous to that used in the papers  (see e.g. \cite{Guan2008IMRN,GQY2018,Gursky-Streets-Warren2010,Li2011Sheng}).
    For convenience, we present the details below. 
    The method is analogous to that used in \cite{Guan2008IMRN,GQY2018,Gursky-Streets-Warren2010,Li2011Sheng}
    where the equations are of fully uniform ellipticity. 
   % The method is analogous to that used in the papers cited above (see e.g. \cite{Guan2008IMRN,Guan2003Wang,Gursky-Streets-Warren2010}). 
   
As above we denote $w=|\nabla u|^2$. As in \cite{Guan2008IMRN},
 we consider the quantity $$Q:=\eta w e^\phi$$ where $\phi$ is determined later. 
 Following \cite{Guan2003Wang} %one takes $\eta$ to be
 let $\eta$ be a smooth function 
with compact support in $B_r \subset M$ and 
\begin{equation}
\label{eta1}
\begin{aligned}
0\leq \eta\leq 1, \mbox{  } \eta|_{B_{\frac{r}{2}}}\equiv1, 
 \mbox{  } |\nabla\eta|\leq  \frac{C\sqrt{\eta}}{r},
  \mbox{  } |\nabla^2\eta|\leq \frac{C}{r^2}.
\end{aligned}
\end{equation}
 The quantity $Q$ must attain its maximum at an interior point $x_0\in M$. 
 We may assume $|\nabla u|(x_0)\geq 1$.
 By maximum principle at $x_0$
 \begin{equation}
 \label{mp1}
\begin{aligned}
\,& \frac{\nabla_i\eta}{\eta}+\frac{\nabla_i w}{w}+\nabla_i\phi=0, 
\,& \mathcal{L}(\log\eta+\log w+\phi)\leq 0.
\end{aligned}
\end{equation}
 Around $x_0$ we choose a local orthonormal frame $e_1,\cdots, e_n$;
  for simplicity,  we further assume $e_1,\cdots,e_n$ have been chosen such that at $x_0$,  $\Gamma_{ij}^k=0$,
 %$\nabla u=(\nabla_1u)e_1$.
 $\mathfrak{g}_{ij}$ is diagonal (so is $F^{ij}$).
 
 \subsubsection*{Step 1. Computation and estimation for $\mathcal{L}(\log w)$}
 By \eqref{wi1},
\begin{equation}
\label{wiwi1}
\begin{aligned}
F^{ij}\nabla_iw \nabla_j w \leq 4|\nabla u|^2 F^{ij}\nabla_{ik}u\nabla_{jk}u. %\nonumber
\end{aligned}
\end{equation}
 Using Cauchy-Schwarz inequality, one derives
%and %as in \cite{GQY2018},
 \begin{equation}
 \label{wiwi2}
\begin{aligned}
F^{ij} \frac{\nabla_i w\nabla_j w}{w^2}\leq (1+\epsilon)
\left(\frac{1}{\epsilon}F^{ij}\frac{\nabla_i \eta\nabla_j \eta}{ \eta^2}
+F^{ij}\nabla_i \phi \nabla_j \phi \right), \nonumber
\end{aligned}
\end{equation}
which, together with \eqref{wiwi1}, yields 
  \begin{equation}
 \label{wiwi3}
\begin{aligned}
F^{ij} \frac{\nabla_i w\nabla_j w}{w^2}
\leq (1-\epsilon^2)
\left(\frac{1}{\epsilon}F^{ij}\frac{\nabla_i \eta\nabla_j \eta}{ \eta^2}
+F^{ij}\nabla_i \phi \nabla_j \phi \right)+\frac{4\epsilon}{w}F^{ij}\nabla_{ik}u \nabla_{jk}u. \nonumber
\end{aligned}
\end{equation}
 Set $0<\epsilon\leq \frac{1}{4}$. We now obtain 
 \begin{equation}
 \label{Llogw}
\begin{aligned}
\mathcal{L}(\log w) \geq \,&
\frac{1}{w}F^{ij}\nabla_{ik}u \nabla_{jk}u
+2(\psi_z+\frac{1}{w}\nabla'_k \psi \nabla_k u)
 \\
\,&
-2F^{ij}(A_{ij,z}+\frac{1}{w}\nabla'_k A_{ij}\nabla_k u)  
-\frac{1-\epsilon^2}{\epsilon}F^{ij}\frac{\nabla_i \eta\nabla_j \eta}{ \eta^2}
 \\
\,&
-(1-\epsilon^2)F^{ij}\nabla_i\phi \nabla_j\phi -C_0\sum F^{ii}.
\end{aligned}
\end{equation}
 
 \subsubsection*{Step 2. Construction and computation of $\phi$}
 As in \cite{Guan2008IMRN}  (see also \cite{GQY2018}), let $\phi=v^{-N}$ where $v=u-\inf_{B_r} u+2$ ($v\geq 2$ in $B_r$)  and $N\geq 1$ is an integer that is chosen later. By direct computation,
\begin{equation}
\label{phii}
\begin{aligned}
\,& \nabla_i \phi=-Nv^{-N-1}\nabla_i u, 
%\mbox{  } 
\,& \nabla_{ij}\phi
=N(N+1)v^{-N-2}\nabla_i u\nabla_j u-Nv^{-N-1}\nabla_{ij}u  \nonumber
\end{aligned}
\end{equation}
which % couple with fully uniform ellipticity \eqref{fully-uniform2}, 
implies 
\begin{equation}
\label{Lphi}
\begin{aligned}
\mathcal{L}\phi
= \,& N(N+1)v^{-N-2}F^{ij}\nabla_i u \nabla_j u-Nv^{-N-1}F^{ij}\nabla_{ij}u 
\\\,& -Nv^{-N-1}(F^{ij}A_{ij,p_l}-\psi_{p_l})\nabla_l u. 
%\geq \,& \theta wN(N+1) v^{-N-2} \sum F^{ii}-Nv^{-N-1}F^{ij}\nabla_{ij}u \\\,& -Nv^{-N-1}(F^{ij}A_{ij,p_l}-\psi_{p_l})\nabla_l u.
\end{aligned}
\end{equation}
% Moreover,  $F^{ij}\nabla_i\phi \nabla_j \phi=N^2 v^{-2N-2}F^{ij}\nabla_i u \nabla_j u$.
 
% \subsubsection*{Step 3. Coumputation for $\mathcal{L}(\log\eta)$}
% \begin{equation} \label{Llogeta} \begin{aligned}
%\mathcal{L}(\log\eta)=\frac{F^{ij}\nabla_{ij}\eta}{\eta}-\frac{F^{ij}\nabla_i \eta \nabla_j\eta}{\eta^2}+(F^{i\bar j}A_{ij,p_l}-\psi_{p_l})\frac{\nabla_l \eta}{\eta}.
%\end{aligned}\end{equation}
 
 \subsubsection*{Step 3. Completion of the proof}
 By \eqref{fully-uniform2},
 \begin{equation}
 \label{fully-application1}
\begin{aligned}
\,& F^{ij}\nabla_{ik}u \nabla_{jk}u \geq \theta |\nabla_{ik} u|^2 \sum F^{ii}, 
%\mbox{ } 
\,& F^{ij}\nabla_i u \nabla_j u \geq \theta w\sum F^{ii}.
\end{aligned}
\end{equation}
By Cauchy-Schwarz inequality again, one derives
\begin{equation}
\label{cauchy-1}
\begin{aligned}
\frac{1}{8}N^2 v^{-N-2}\theta w+\frac{\theta}{2w}|\nabla_{ik}u|^2 \geq \frac{1}{2}N\theta |\nabla_{ik}u| v^{-\frac{N}{2}-1}. \nonumber
\end{aligned}
\end{equation}
We choose $N\gg1$ so that 
\begin{equation}
\begin{aligned}
 \,& \frac{1}{4}N\theta v^{-\frac{N}{2}-1}\geq Nv^{-N-1}, 
\,& N(N+1)v^{-N-2}-N^2v^{-2N-2}\geq \frac{N^2}{2}v^{-N-2}.  \nonumber
\end{aligned}
\end{equation}
Suppose furthermore that $\frac{1}{8}Nv^{-N-2}\theta w\geq C_0$ where $C_0$ is as in \eqref{Llogw} (otherwise we are done). 
 Together with \eqref{mp1}, \eqref{Llogw}, \eqref{Lphi} and \eqref{fully-application1},
 we derive
  \begin{equation}
 \label{key1}
\begin{aligned}
0 \geq \,&
%\frac{1}{w}F^{ij}\nabla_{ik}u \nabla_{jk}u +2(\psi_z+\frac{1}{w}\nabla'_k \psi \nabla_k u)-2F^{ij}(A_{ij,z}+\frac{1}{w}\nabla'_k A_{ij}\nabla_k u)   \\ \,& +\{N(N+1)v^{-N-2}-(1-\epsilon^2)N^2v^{-2N-2}\}F^{ij}\nabla_i u\nabla_j u -Nv^{-N-1}F^{ij}\nabla_{ij}u\\ \,&+(F^{ij}A_{ij,p_l}-\psi_{p_l})\frac{\nabla_l \eta}{\eta} + \frac{F^{ij}\nabla_{ij}\eta}{\eta}-\frac{1+\epsilon-\epsilon^2}{\epsilon}F^{ij}\frac{\nabla_i \eta\nabla_j \eta}{ \eta^2} \\ \,& -Nv^{-N-1}(F^{ij}A_{ij,p_l}-\psi_{p_l})\nabla_l u -C_0\sum F^{ii}
% \\ \geq\,&
 \frac{1}{4}\theta N^2 w v^{-N-2}\sum F^{ii}
% +\frac{\theta}{2w}|\nabla_{ik}u|^2\sum F^{ii}
 +\frac{1}{4}N\theta |\nabla_{ik} u|v^{-\frac{N}{2}-1} \sum F^{ii}
 +2(\psi_z+\frac{1}{w}\nabla'_k \psi \nabla_k u)
 \\\,&
-2F^{ij}(A_{ij,z}+\frac{1}{w}\nabla'_k A_{ij}\nabla_k u)  
-Nv^{-N-1}(F^{ij}A_{ij,p_l}-\psi_{p_l})\nabla_l u
\\ \,&
+(F^{ij}A_{ij,p_l}-\psi_{p_l})\frac{\nabla_l \eta}{\eta} + \frac{F^{ij}\nabla_{ij}\eta}{\eta}
-\frac{1+\epsilon-\epsilon^2}{\epsilon}F^{ij}\frac{\nabla_i \eta\nabla_j \eta}{ \eta^2}. \nonumber
\end{aligned}
\end{equation}
Using \eqref{eta1} and the asymptotic assumption \eqref{key-assimption1}, we obtain
\begin{equation}
\begin{aligned}
0\geq  \,&
 \frac{1}{4}\theta N^2 w v^{-N-2}\sum F^{ii}
% +\frac{\theta}{2w}|\nabla_{ik}u|^2\sum F^{ii}
 +\frac{1}{4}N\theta |\nabla_{ik} u|v^{-\frac{N}{2}-1} \sum F^{ii}  -\frac{C}{r^2\eta}\sum F^{ii}
 \\\,&
 -\left(\beta(x,u,|\nabla u|) w+CNv^{-N-1}w+\frac{C\sqrt{w}}{r\sqrt{\eta}}\right) \left(1+\sum F^{ii}\right), \nonumber
\end{aligned}
\end{equation}
which gives $\eta w\leq \frac{C}{r^2}$. Then we complete the proof.
 
% \end{proof}

 \subsection{Interior estimates for second derivatives}

%Let $\zeta$ be a cutoff function as given by \eqref{eta1}.
%\begin{equation}\label{cutfunction-2nd}\begin{aligned}
%\zeta\in C_0^\infty(B_r),\mbox{  }  0\leq \zeta\leq 1, \mbox{  } \zeta|_{B_{\frac{r}{2}}}\equiv 1, \mbox{  } |\nabla\zeta|\leq C_r,
% \mbox{  } |\nabla^2 \zeta|\leq C_r.
%\end{aligned}\end{equation}
Let's consider the function
$$P(x)=\max_{\xi\in T_x\bar M, |\xi|=1} \eta\mathfrak{g}(\xi,\xi)e^{\varphi},
$$
where $\eta$ is the cutoff function as given by \eqref{eta1} and $\varphi$ is a function to be chosen later.
One knows $P$ achieves maximum at an interior point $x_0\in B_r$ and for $\xi\in T_{x_0}\bar M$. 
Around $x_0$ we choose a smooth orthonormal local frame $e_1,\cdots, e_n$ such that $e_1(x_0)=\xi$, $\Gamma_{ij}^k(x_0)=0$ and
$\{\mathfrak{g}_{ij}(x_0)\}$ is diagonal (so is $\{F^{ij}(x_0)\}$). We may assume $\mathfrak{g}_{11}(x_0)\geq 1$.
At $x_0$ one has
\begin{equation}
\label{mp2nd-1}
\begin{aligned}
\,& \nabla_i \varphi+\frac{\nabla_i \mathfrak{g}_{11}}{\mathfrak{g}_{11}}+\frac{\nabla_i \eta}{\eta}, 
\,&
\mathcal{L}(\varphi+\log\eta+\log\mathfrak{g}_{11}) \leq 0.
\end{aligned}
\end{equation}
  
\subsubsection*{Step 1. Estimation for $\mathcal{L}(\mathfrak{g}_{11})$}

%Differentiating the equation, one has

It follows from \eqref{differ-buchong3}  that
\begin{equation}
\label{key2nd-1}
\begin{aligned}
F^{ii}\nabla_{11}\mathfrak{g}_{ii}\geq \psi_{p_l}\nabla_l \mathfrak{g}_{11}-C_0\mathfrak{g}_{11}^2.
\end{aligned}
\end{equation}
The straightforward computation shows
\begin{equation}
\label{key2nd-2}
\begin{aligned}
F^{ii}(\nabla_{ii}\mathfrak{g}_{11}-\nabla_{11}\mathfrak{g}_{ii}) \geq %\,&
%F^{ii}\nabla_l\mathfrak{g}_{ii} A_{11,p_l}-F^{ii}A_{ii,p_l}\nabla_l\mathfrak{g}_{11}-C_1\mathfrak{g}_{11}^2 \sum_{i=1}^n F^{ii} \\
%\geq\,&
 -F^{ii}A_{ii,p_l}\nabla_l\mathfrak{g}_{11}
-C_2\mathfrak{g}_{11}^2 \sum_{i=1}^n F^{ii}-C_2\mathfrak{g}_{11}.
\end{aligned}
\end{equation}
Here we use \eqref{differ-equa1}. %$F^{ii}\nabla_l\mathfrak{g}_{ii} =\nabla'_l\psi+\psi_z \nabla_l u+\psi_{p_k}\nabla_{lk}u$.
Combing \eqref{key2nd-1} and \eqref{key2nd-2}, 
\begin{equation}
\label{key2nd-3}
\begin{aligned}
\mathcal{L}(\mathfrak{g}_{11})=%\,&
F^{ii}\nabla_{ii}\mathfrak{g}_{11}+(F^{ii}A_{ii,p_l}-\psi_{p_l})\nabla_l\mathfrak{g}_{11}  \nonumber
%\\\geq \,&
\geq
-C_3\mathfrak{g}_{11}^2(1+\sum_{i=1}^n F^{ii}).
\end{aligned}
\end{equation}
Using \eqref{mp2nd-1} and Cauchy-Schwarz inequality, 
\begin{equation}
\begin{aligned}
F^{ii}\frac{|\nabla_i\mathfrak{g}_{11}|^2}{\mathfrak{g}_{11}^2}
\leq \frac{3}{2}F^{ii}|\nabla_i\varphi|^2+3F^{ii}\frac{|\nabla_i \eta|^2}{\eta^2}.  \nonumber
\end{aligned}
\end{equation}
Hence,
\begin{equation}
\label{key2nd-4}
\begin{aligned}
\mathcal{L}(\log\mathfrak{g}_{11}) \geq -C_3\mathfrak{g}_{11}(1+\sum_{i=1}^n F^{ii}) -\frac{3}{2}F^{ii}|\nabla_i\varphi|^2
-3F^{ii}\frac{|\nabla_i \eta|^2}{\eta^2}.  \nonumber
\end{aligned}
\end{equation}

\subsubsection*{Step 2. Construction and estimate for $\mathcal{L}\varphi$}

Following \cite{Guan2008IMRN}  we set $$\varphi=\varphi(w)=(1-\frac{w}{2N})^{-\frac{1}{2}} \mbox{ where $w=|\nabla u|^2$ and $N=\sup_{\{\eta>0\}}|\nabla u|^2$}. $$
One can check $\varphi'=\frac{\varphi^3}{4N},$ $\varphi''=\frac{3\varphi^5}{16N^2}$ and $1\leq \varphi\leq \sqrt{2}$.
By \eqref{Lw1}, \eqref{fully-uniform2} and Cauchy-Schwarz inequality, we obtain
\begin{equation}
\begin{aligned}
\mathcal{L}w \geq 
%F^{ii}\mathfrak{g}_{ii}^2-C_4\mathfrak{g}_{11}\sum_{i=1}^n F^{ii}
%\geq (\theta \mathfrak{g}_{11}^2 -C_4\mathfrak{g}_{11})\sum_{i=1}^n F^{ii}.  \nonumber
\frac{1}{2}F^{ii}\mathfrak{g}_{ii}^2-C_4 \sum_{i=1}^n F^{ii}
\geq (\frac{\theta }{2}\mathfrak{g}_{11}^2 -C_4)\sum_{i=1}^n F^{ii}.  \nonumber
\end{aligned}
\end{equation}
Consequently, 
\begin{equation}
\label{key2nd-5}
\begin{aligned}
\mathcal{L}\varphi
%=\,&\varphi'' F^{ii}|\nabla_i w|^2+\varphi'\mathcal{L}w \\
% =\frac{3\varphi^5}{16N^2}F^{ii}|\nabla_i w|^2+\frac{\varphi^3}{4N}\mathcal{L}w \\
\geq %\,&
\frac{3\varphi^5}{16N^2}F^{ii}|\nabla_i w|^2+
\frac{\varphi^3}{4N}  (\frac{\theta }{2}\mathfrak{g}_{11}^2 -C_4)\sum_{i=1}^n F^{ii}.  \nonumber
\end{aligned}
\end{equation}

\subsubsection*{Step 3. Completion of the proof}
From \eqref{eta1},
\begin{equation}
\begin{aligned}
\,&
F^{ii}\frac{|\nabla_i\eta|^2}{\eta^2} \leq \frac{C}{r^2 \eta}\sum_{i=1}^n F^{ii},
\,&
\mathcal{L}(\log\eta)\geq -\frac{C}{r^2 \eta}\sum_{i=1}^n F^{ii} -\frac{C}{r\sqrt{\eta}}(1+\sum_{i=1}^n F^{ii}).  \nonumber
\end{aligned}
\end{equation}
Next,
\begin{equation}
\begin{aligned}
\frac{3}{2}F^{ii}|\nabla_i\varphi|^2=\frac{3\varphi^6}{32N^2}F^{ii}|\nabla_i w|^2
%\leq \frac{3\sqrt{2}\varphi^5}{32N^2}F^{ii}|\nabla_i w|^2
 \leq \frac{3\varphi^5}{16N^2}F^{ii}|\nabla_i w|^2.  \nonumber
\end{aligned}
\end{equation}
Finally,  at $x_0$, %we achieve our goal,
\begin{equation}
\begin{aligned}
0\geq \,& \frac{3}{4N} \left(\frac{\theta}{2} \mathfrak{g}_{11}^2-\frac{4NC_3}{3}\mathfrak{g}_{11}-C_4\right)\sum_{i=1}^n F^{ii}-
C_3\mathfrak{g}_{11} \\\,&
-\frac{C}{r^2 \eta}\sum_{i=1}^n F^{ii} -\frac{C}{r\sqrt{\eta}}(1+\sum_{i=1}^n F^{ii}). \nonumber
%-\frac{C_r}{\eta^2}(1+\sum_{i=1}^n F^{ii}).
\end{aligned}
\end{equation}
Combining with \eqref{sumfi} we have $\eta \mathfrak{g}_{11}\leq \frac{C}{r^2}$ at $x_0$. 
%\begin{equation}\begin{aligned}
%\eta \mathfrak{g}_{11}\leq \frac{C}{r^2} \mbox{ at } x_0.  \nonumber
%\end{aligned}\end{equation}
Therefore the proof is complete.

\subsection{Boundary estimates}

For a boundary point $x_0\in\partial M$ we choose a smooth orthonormal local frame $e_1, \cdots, e_n$ around $x_0$ such that
$e_n$ is unit outer normal vector field when restricted to $\partial M$.  
As we denoted above, %of Theorem \ref{thm1-asymptotic},
 $\mathrm{d}(x)$ is the distance from $x$ to $\partial M$.
Denote by % $ \mathrm{d}(x)$ and
 $\rho(x)$ the distance from $x$ to $x_0$ with respect to $g$,   
\begin{equation}
\label{distance1-def}
\begin{aligned}
%\,& \mathrm{d}(x):=\mathrm{dist}_g(x,\partial M),
%\end{aligned}
%\end{equation}  
%\begin{equation}
%\label{distance2-def}
%\begin{aligned}
 \rho(x):=\mathrm{dist}_g(x,x_0).
\end{aligned}
\end{equation} 
Then $\mathrm{d}$ is smooth near the boundary (whenever $\partial M$ is smooth), i.e., 
$\mathrm{d}$ is smooth and $|\nabla \mathrm{d}|\geq \frac{1}{2}$ in $M_\delta$ for small $\delta>0$, where
\begin{equation}\begin{aligned}
%\,&\Omega_\delta:=\{x\in M: 0<\mathrm{d}<\delta\}, 
\,& M_\delta:=\{x\in M: \rho(x)<\delta\}. \nonumber
\end{aligned}
\end{equation}

 \subsubsection*{Case 1. Pure tangential derivatives}   %Since on boundary $u=\varphi$,
 From the boundary value condition,  on $\partial M$
\begin{equation}
\label{morry-1}
\begin{aligned}
\,& \nabla_\alpha u=\nabla_\alpha \varphi, 
\,& \nabla_{\alpha\beta}u=\nabla_{\alpha\beta}\varphi+\nabla_{n}(u-\varphi)\mathrm{II}(e_\alpha,e_\beta)
% \mbox{ on } \partial M  \nonumber
\end{aligned}
\end{equation}
 for $1\leq \alpha, \beta<n$, where $\mathrm{II}$ is the fundamental form of $\partial M$. 
 This gives the bound of
 second estimates for pure tangential derivatives
\begin{equation}
\label{ineq2-bdy}
\begin{aligned}
|\mathfrak{g}_{\alpha\beta}(x_0)|\leq C.
\end{aligned}
\end{equation}

 \subsubsection*{Case 2. Mixed derivatives}
 For $1\leq\alpha<n$, the local barrier function $\Psi$ on $M_\delta$ ($\delta$ is small) is given by
\begin{equation}
\begin{aligned}
\Psi=\pm \nabla_\alpha (u-\varphi) +A_1 (\frac{N \mathrm{d}^2}{2}
-t \mathrm{d})-A_2\rho^2+A_3 \sum_{k=1}^{n-1} |\nabla_k (u-\varphi)|^2,  \nonumber
\end{aligned}
\end{equation}
where $A_1$, $A_2$, $A_3$, $N$, $t$ are all positive constants to be determined, and $$N\delta-2t\leq0.$$
%The local barriers of this type originate from \cite{Guan1993Boundary}.  
The  local barrier is similar to that used in \cite{Guan12a}, in which the subsolution is a key ingredient.  

 Next, we collect and derive some useful inequalities used here. Clearly, 
 \begin{equation}
\label{Ld1}
\begin{aligned}
\,&\mathcal{L}(\mathrm{d})\leq C_d(1+\sum F^{ii}),
\,&
\mathcal{L}(\rho^2)\leq C_\rho (1+\sum F^{ii}).  \nonumber
\end{aligned}
\end{equation}

To deal with local barrier functions, we need the following formula 
 \begin{equation}
\label{ineq1-bdy}
\begin{aligned}
\nabla_{ij}(\nabla_k u)=\nabla_{ijk}+\Gamma_{ik}^l\nabla_{jl}u+\Gamma_{jk}^l\nabla_{il}u+\nabla_{\nabla_{ij}e_k}u, \nonumber
\end{aligned}
\end{equation}
see e.g. \cite{Guan12a}. Combining with \eqref{ineq1-c1} one derives
\begin{equation}
\label{bdy-1}
\begin{aligned}
|\mathcal{L}(\nabla_k (u-\varphi))|\leq C_1(1+\sum_{i=1}^n f_i+\sum_{i=1}^n f_i |\lambda_i|).  \nonumber
\end{aligned}
\end{equation}
  As in \eqref{Lw1}, we can prove
\begin{equation}
\label{bdy-2}
\begin{aligned}
\mathcal{L}(|\nabla_k (u-\varphi)|^2) \geq F^{ij}\mathfrak{g}_{ik}\mathfrak{g}_{jk} 
-C_1(1+\sum_{i=1}^n f_i+\sum_{i=1}^n f_i |\lambda_i|).  \nonumber
\end{aligned}
\end{equation}

By Proposition 2.19 of \cite{Guan12a}, there exists an index $1\leq r\leq n$ 
\begin{equation}
\begin{aligned}
\sum_{k=1}^{n-1} F^{ij}\mathfrak{g}_{ik}\mathfrak{g}_{jk} \geq \frac{1}{2} \sum_{i\neq r} f_i\lambda_i^2.  \nonumber
\end{aligned}
\end{equation}

Since $f$ satisfies \eqref{fully-uniform2},
% i.e., $f$ is of fully uniform ellipticity $$f_i\geq \theta \sum_{j=1}^n f_j \mbox{ for all } i,$$ 
 as in \cite{Guan14}, 
%one can prove
 there are $c_0$, $C_0$ depending on $\theta$ and other known data so that
\begin{equation}
\label{1c-0C-0}
\begin{aligned}
\sum_{i\neq r} f_i\lambda_i^2 \geq c_0|\lambda|^2\sum_{i=1}^n f_i-C_0\sum_{i=1}^n f_i.   %\nonumber
\end{aligned}
\end{equation}
%By condition \eqref{fully-uniform2}, 
%More precisely,  if $\lambda_r\leq 0$ then $|\lambda_r|=-\lambda_r<\sum_{i\neq r}\lambda_i$ and $\sum_{i\neq r} \lambda_i^2 \geq \frac{|\lambda|^2}{n}.$  For $\lambda\in\partial\Gamma^\sigma$, if $\lambda_r>0$ then
%\begin{equation}\begin{aligned}
%$(\theta\sum_{i=1}^n f_i )^2 \lambda_r^2\leq (f_r\lambda_r)^2\leq (-\sum_{i\neq r} f_i\lambda_i +c_\sigma\sum_{i=1}^n f_i)^2$
%\end{aligned}\end{equation}
%which implies $$\sum_{i\neq r}\lambda_i^2 \geq \frac{\theta^2 |\lambda^2|}{2(n-1)+\theta^2}-\frac{2c_\sigma^2}{2(n-1)+\theta^2},$$here as denoted above $f(c_\sigma\vec{\bf1})=\sigma.$  
%Then we get \eqref{1c-0C-0}. % (See also  \cite{Guan14}).
%On the other hand,
%\begin{equation}\begin{aligned}\mathcal{L}(\frac{N \mathrm{d}^2}{2}-t \mathrm{d})
%=\,&NF^{ij}\nabla_i \mathrm{d}\nabla_j \mathrm{d}+(N \mathrm{d}-t)\mathcal{L} \mathrm{d} \\
%\geq  \frac{N\theta}{4}\sum_{i=1}^n f_i-C_d|N\mathrm{d}-t|(1+\sum F^{ii}), \mbox{  } N>0. \nonumber
%\end{aligned}\end{equation}
By \eqref{sumfi} there is $\kappa>0$ so that $\sum_{i=1}^n f_i = \sum F^{ii} \geq \kappa>0.$
%\begin{equation} \label{sumfi-23} \begin{aligned}
%\sum_{i=1}^n f_i = \sum F^{ii} \geq \kappa>0. \nonumber
%\end{aligned} \end{equation}
Thus,
if $\delta $ and $t$ are chosen small enough so that
 $C_d \max\{|N\delta-t|, t\}\leq  \frac{N \kappa \theta}{8(1+\kappa)},$
 then 
 \begin{equation}
\begin{aligned}
\mathcal{L}(\frac{N \mathrm{d}^2}{2}-t \mathrm{d})
\geq \,& \frac{N \kappa \theta}{4(1+\kappa)}(1+\sum_{i=1}^n f_i)-C_d|N\mathrm{d}-t|(1+\sum F^{ii}) \\
\geq \,&
 \frac{N \kappa \theta}{8(1+\kappa)}(1+\sum_{i=1}^n f_i).  \nonumber
\end{aligned}
\end{equation}

Putting those inequalities above together, if $A_1\gg A_2$, $A_1\gg A_3>1$ then 
\begin{equation}
\begin{aligned}
\mathcal{L}(\Psi)
%\geq \,& \frac{A_1N\theta \kappa}{8(1+\kappa)}(1+\sum_{i=1}^n f_i)+\frac{A_3c_0}{2}|\lambda|^2 \sum_{i=1}^n f_i-C_1(1+A_3)\sum_{i=1}^n f_{i}|\lambda_i| \\ \,&-\frac{A_3C_0}{2}\sum_{i=1}^n f_i-(A_2C_\rho+C_1(1+A_3))(1+\sum_{i=1}^n f_i)\\
\geq\,&
\frac{A_1N\kappa\theta}{8(1+\kappa)}(1+\sum_{i=1}^n f_i )
-(\frac{A_3C_0}{2}+\frac{C_1^2(1+A_3)^2}{2A_3c_0})\sum_{i=1}^n f_i
\\ \,&
-(A_2C_\rho+ C_1(1+A_3))(1+\sum_{i=1}^n f_i)>0 \mbox{ in } M_\delta.  \nonumber
\end{aligned}
\end{equation}
On the other hand $\Psi|_{\partial M}=-A_2\rho^2\leq 0$; while on $\partial M_\delta\setminus \partial  M$, $\rho=\delta$, so $\Psi\leq 0$ if  $N\delta-2t\leq 0$, $A_2\gg1$ and $A_2\gg 1$.

Note that $\Psi(x_0)=0$. By Hopf lemma one derives %the second estimates for mixed derivatives
\begin{equation}
\label{mix-1}
\begin{aligned}
|\mathfrak{g}_{\alpha n}(x_0)|\leq C \mbox{ for } 1\leq \alpha\leq n-1.
\end{aligned}
\end{equation}

 \subsubsection*{Case 3. Double normal derivatives}
 Fix $x_0\in\partial M$. As above $c_{\psi[u](x_0)}$ denotes the positive constant such that
 $$f(c_{\psi[u](x_0)}\vec{\bf 1})=\psi[u](x_0).$$
 Since $\mathrm{tr}_g (\mathfrak{g})>0$,  \eqref{ineq2-bdy} and \eqref{mix-1}, we have $\mathfrak{g}_{nn}\geq -C$. 
 In what follows we assume $\mathfrak{g}_{nn}(x_0)\geq c_{\psi[u](x_0)}$ (otherwise we are done).
 Since  \eqref{fully-uniform2}, $F^{ij}\geq \theta \sum_{k=1}^n f_k \delta_{ij}$.  (Here $g_{ij}(x_0)=\delta_{ij}$). 
 %Thus  \begin{equation} \label{dou-normal1} F^{ii}\geq \theta \sum_{i=1}^n f_i. \end{equation}
 By the concavity of equation,   \eqref{ineq2-bdy} and \eqref{mix-1}
\begin{equation}
\begin{aligned}
0= F(\mathfrak{g})-F(c_{\psi[u](x_0)} g) 
\geq \,&  
F^{ij}(\mathfrak{g})(\mathfrak{g}_{ij}-c_{\psi[u](x_0)}\delta_{ij}) 
%\\ \geq \,&-\sum_{i<n \mbox{ or } j<n}|F^{ij}(\mathfrak{g})(\mathfrak{g}_{ij}-c_{\psi[u](x_0)}\delta_{ij})|+ F^{nn}(\mathfrak{g}) (\mathfrak{g}_{nn}-c_{\psi[u](x_0)}) 
\\
\geq \,& 
-C'\sum_{i=1}^n f_i (\lambda)+ \theta(\mathfrak{g}_{nn}-c_{\psi[u](x_0)})  \sum_{i=1}^n f_i(\lambda).  \nonumber
\end{aligned}
\end{equation}
This gives $\mathfrak{g}_{nn}(x_0) \leq C.$
%\begin{equation} \begin{aligned}
%\mathfrak{g}_{nn}(x_0) \leq C. \nonumber
%\end{aligned} \end{equation}

%We complete the proof of boundary estimate.

%\begin{remark}
%In proof of interior and boundary estimates we do not use subsolutions.
%\end{remark}

\begin{remark}
\label{remark-10}
The cone $\widetilde{\Gamma}$, corresponding to $\Gamma$ constructed as in \eqref{map1}, is an open symmetric convex cone in 
$\mathbb{R}^n$ with vertex at origin. It should be notable that $\widetilde{\Gamma}$ contains the positive 
cone $\Gamma_n$ if $\varrho\leq 1$ and $\varrho\neq 0$, %(thus, it is of type 2 according to Corollary \ref{coro-type2}), 
while it may not contain $\Gamma_n$ for some $1<\varrho<\frac{1}{1-\kappa_\Gamma \vartheta_{\Gamma}}.$ 
Fortunately, from the proof of interior and boundary estimates above, we can check that this does not affect the proof. So the
 estimates obtained there still hold for
 \eqref{n-1-equation1} even if
$1<\varrho<\frac{1}{1-\kappa_\Gamma \vartheta_{\Gamma}}$. % $\varrho\neq 0$.
\end{remark}

%\section{Statement of interior estimates}
%\label{statement-estimates}

 %The proof will be given by combining with the results obtained in Sections \ref{section4} and \ref{interiori-estimate} below.

% \section{Proof of main results}
\section{Existence and asymptotic behavior of complete conformal metrics}
\label{proofofmainresult}

%In addition to \eqref{elliptic} and \eqref{concave}, 
We assume throughout this section that 
%$(M,g)$ is a compact Riemannian manifold of dimension $n\geq 3$ with smooth boundary, 
$\nu$ is the outer unit normal vector field along $\partial M$,
 $\psi\in C^\infty(\bar M), \psi>0 \mbox{ in } \bar M$.
 %and $f$ is homogeneous of degree one.
 % $$f|_{\partial\Gamma}\equiv0, \mbox{  } f|_{\Gamma}>0, \mbox{  } f(\vec{\bf 1})=1.$$ 
 %Using \eqref{concavity1},  $\sum_{i=1}^n f_i(\lambda)\lambda_i=f(\lambda)$, $f(\vec{\bf 1})=1$, we derive $f_i(\vec{\bf 1})=\frac{1}{n}$ for all $i$, and 
%\begin{equation}\label{key1-main}\begin{aligned}
%\,&   \sum_{i=1}^n \lambda_i \geq nf(\lambda),\,& \forall \lambda\in\Gamma.
%\end{aligned}\end{equation}  
%We observe that \eqref{key1-main}  is a key ingredient in the study of asymptotic behavior.
We denote %$U[u]$ by 
$$U[u]:=\Delta u g-\nabla^2 u +\frac{n-3}{2}|\nabla u|^2 g+du\otimes du.$$
%\begin{equation}\begin{aligned}
%U[u]:=\Delta u g-\nabla^2 u +\frac{n-3}{2}|\nabla u|^2 g+du\otimes du.  \nonumber
%\end{aligned}\end{equation}
From \eqref{conformal-Einstein-tensor}, 
$\mathfrak{g}[u]:=\frac{1}{n-2}G_{\tilde{g}}=\frac{1}{n-2}G_g+U[u] \mbox{ for }\tilde{g}=e^{2u}g.$
%\begin{equation}\begin{aligned}
%\mathfrak{g}[u]=\frac{1}{n-2}G_{\tilde{g}}=\frac{1}{n-2}G_g+U[u] \mbox{ for }\tilde{g}=e^{2u}g.
%\end{aligned}\end{equation}
Without loss of generality, we assume %the background metric
 $g$ is \textit{admissible}, i.e.,
 $\lambda(G_g)\in \Gamma$ in $\bar M$. 
%Let $\underline{\psi}=f(\lambda(G_g))$. 

\subsection{Proof of Theorem \ref{thm0-conformal}}
%\subsection{Proof of Theorems \ref{thm0-conformal} and \ref{thm2-conformal}, and asymptotic properties near boundary}
% and asymptotic properties of complete conformal metrics near boundary
In the  proof we apply standard method of approximation, in which the construction of local lower barriers follows an idea of \cite{Guan2008IMRN}. 
 First, we are going to study Dirichlet problem for equation \eqref{conformal-equation-degree1}:
 \begin{equation}
\label{dirichlet-equ1}
\begin{aligned}
\,& f(\lambda(\mathfrak{g}[{u}])=\frac{\psi}{n-2}e^{2{u}} \mbox{ in } M, 
\,& u=\varphi \mbox{ on } \partial M,
\end{aligned}
\end{equation}
 as above, %$\mathfrak{g}[u]=\frac{1}{n-2}G_{\tilde{g}}=\frac{1}{n-2}G_g+U[u] \mbox{ for }\tilde{g}=e^{2u}g.$
\begin{equation}\label{g-u}\begin{aligned}
\mathfrak{g}[u]=\frac{1}{n-2}G_{\tilde{g}}=\frac{1}{n-2}G_g+U[u] \mbox{ for }\tilde{g}=e^{2u}g.
\end{aligned}\end{equation}
% (possibly with infinity boundary value to assure the completeness of metric).
 %Firstly, we prove 
%We prove the following proposition from which  Theorem \ref{thm2-conformal} follows. %as a consequence.
\begin{proposition}
\label{lemma4-main}
Given $\varphi\in C^\infty(\partial M)$.
The Dirichlet problem \eqref{dirichlet-equ1} admits a unique smooth admissible solution, 
provided that there is an admissible function $\underline{u}\in C^2(\bar M)$.
\end{proposition}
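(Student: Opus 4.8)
The plan is to solve the Dirichlet problem \eqref{dirichlet-equ1} by the method of continuity, deforming from the admissible function $\underline{u}$ to a genuine solution. First I would set up the one-parameter family of equations interpolating between an equation solved by $\underline{u}$ (or a simple modification of it) and \eqref{dirichlet-equ1}, keeping the boundary data fixed at $\varphi$ and maintaining admissibility along the path; a convenient choice is to write the right-hand side as $\frac{\psi_t}{n-2}e^{2u}$ with $\psi_t$ running from a small multiple of $f(\lambda(\mathfrak{g}[\underline u]))e^{-2\underline u}$ up to $\psi$, or alternatively to follow the now-standard trick of replacing $\psi e^{2u}$ by $\psi e^{2(tu+(1-t)\underline u)}$ so that $\underline u$ solves the $t=0$ problem by construction. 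The set of $t$ for which a smooth admissible solution exists is nonempty; openness follows from the implicit function theorem once we know the linearized operator $\mathcal L$ at an admissible solution is invertible, which holds because $\mathcal L$ is second-order elliptic with the strictly negative-in-$z$ zeroth order term coming from differentiating $e^{2u}$ (so the maximum principle applies and $\mathcal L$ has trivial kernel on functions vanishing on $\partial M$).

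The heart of the argument is closedness, i.e.\ establishing a priori $C^{2,\alpha}(\bar M)$ bounds independent of $t$. For the $C^0$ bound one uses the maximum principle: at an interior maximum of $u$ one has $\nabla u=0$ and $\nabla^2 u\le 0$, hence $\mathfrak g[u]=\frac{1}{n-2}G_g+W\le \frac{1}{n-2}G_g$ in the relevant sense, and comparing $f(\lambda(\mathfrak g[u]))$ with the right-hand side $\tfrac{\psi}{n-2}e^{2u}$ forces an upper bound on $\max u$; a lower bound comes symmetrically from the minimum, together with the boundary values $\varphi$ and the admissible subsolution $\underline u$ acting as a barrier from below via the comparison principle. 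The $C^1$ bound splits into a boundary gradient estimate — constructed from barriers built out of $\underline u$ and the distance function, in the usual way — and an interior/global gradient estimate; here is where the new machinery enters, since equation \eqref{conformal-equation-degree1} is of $(n-1)$-type with $\varrho=1$, and by Proposition \ref{keykey} (applicable because $\Gamma\ne\Gamma_n$, so \eqref{assumption-4} holds with $\varrho=1$) the equation is in fact \emph{fully} uniformly elliptic, which is exactly the hypothesis \eqref{fully-uniform2} needed to invoke Theorem \ref{thm-gradient2}; the structure conditions \eqref{key-assimption1} on $A$ and $\psi$ are routine to verify for the conformal $W$ and the right-hand side $\tfrac{\psi}{n-2}e^{2u}$ (which depends on $x,u$ but not $\nabla u$, so \eqref{sumfi-assumption1} is automatic since $f$ is homogeneous of degree one). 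Then Theorems \ref{thm2-bdy} and \ref{interior-2nd-2} (plus the boundary estimate) give $|\nabla^2 u|\le C$ up to the boundary.

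Once $|u|_{C^2(\bar M)}\le C$ with ellipticity constants under control, the Evans--Krylov theorem together with Schauder theory upgrades this to a uniform $C^{2,\alpha}(\bar M)$ bound and then to uniform $C^{k,\alpha}$ bounds for all $k$, so closedness follows and the continuity method yields a smooth admissible solution of \eqref{dirichlet-equ1}. Uniqueness is a direct consequence of the comparison principle: if $u_1,u_2$ are two admissible solutions, then at an interior maximum of $u_1-u_2$ the concavity \eqref{concave} and ellipticity \eqref{elliptic} give $f(\lambda(\mathfrak g[u_1]))\le f(\lambda(\mathfrak g[u_2]))$ at that point combined with the strict monotonicity of $\tfrac{\psi}{n-2}e^{2u}$ in $u$, forcing $u_1\le u_2$; reversing the roles gives $u_1=u_2$. \textbf{The main obstacle} I anticipate is the global gradient and second-derivative estimates: the value $\varrho=1$ is precisely the \emph{critical} case where naive arguments only deliver $(n-1)$-uniform ellipticity, and the whole point is that Proposition \ref{keykey} rescues full uniform ellipticity — so the proof really hinges on correctly matching the conformal equation to the framework of Section \ref{interiori-estimate} and checking that all of \eqref{key-assimption1}, \eqref{nondegenerate1}, \eqref{sumfi-assumption1} are met, the $C^0$ and boundary barriers being comparatively standard.
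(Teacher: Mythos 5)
Your proposal follows essentially the same route as the paper: a continuity method whose openness rests on the full uniform ellipticity supplied by Proposition \ref{keykey} together with the strict monotonicity in $u$ from the $e^{2u}$ factor, and whose closedness bootstraps the $C^0$ bound (maximum principle at interior extrema), boundary gradient bounds (distance-function barriers), and interior $C^1$ and $C^2$ bounds (Theorems \ref{thm-gradient2}, \ref{interior-2nd-2}, \ref{thm2-bdy}, made applicable by the full uniform ellipticity) into $C^{2,\alpha}$ bounds via Evans--Krylov and Schauder. The one small flaw is the continuity path: neither of your two interpolations actually has $\underline u$ (or anything explicit) as the $t=0$ solution as written, whereas the paper first normalizes so that $g$ itself is admissible and then runs $f(\lambda(\tfrac{1}{n-2}G_{e^{2u}g}))=\tfrac{1}{n-2}\bigl(t\psi+(1-t)\underline\psi\bigr)e^{2u}$ with boundary data $t\varphi$ and $\underline\psi=f(\lambda(G_g))$, so that $u^0\equiv 0$ solves the $t=0$ problem by homogeneity of $f$.
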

  The conformal metrics with prescribed boundary metric are also obtained.
 
  \begin{theorem}
 \label{thm2-conformal}
 %Let $(M,g)$ be a $n$-dimensional compact Riemannian manifold with smooth boundary, 
 Let %$n\geq 3$,  
 $\psi\in C^\infty(\bar M)$, $\psi>0$ in $\bar M$. 
 Suppose %\eqref{elliptic},
  \eqref{concave}, \eqref{homogeneous-1},    \eqref{homogeneous-1-buchong2},
\eqref{admissible-metric1} and $\Gamma\neq\Gamma_n$ hold.
 %$(M,g)$ supposes a $C^2$-\textit{admissible conformal} metric satisfying \eqref{admissible-metric1}. 
 For a Riemannian metric $h$ on $\partial M$ which is conformal to $g|_{\partial M}$, there is a smooth conformal 
 metric $\tilde{g}$ with prescribed boundary condition $\tilde{g}|_{\partial M}=h$ to satisfy  \eqref{conformal-equ0}. Moreover, $\lambda_{\tilde{g}}(G_{\tilde{g}})\in \Gamma \mbox{ in } \bar M$.
 
 \end{theorem}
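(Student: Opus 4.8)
The plan is to reduce Theorem \ref{thm2-conformal} to the Dirichlet problem \eqref{dirichlet-equ1} solved in Proposition \ref{lemma4-main}. Since $\partial M$ is a smooth closed manifold of dimension $n-1\geq 2$ and $h$ is pointwise conformal to $g|_{\partial M}$, we may write $h=e^{2\varphi_0}g|_{\partial M}$ with $\varphi_0\in C^\infty(\partial M)$, and then extend $\varphi_0$ to some $\varphi\in C^\infty(\bar M)$.

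First I would verify that the admissible metric provided by \eqref{admissible-metric1} is exactly the admissible function needed by Proposition \ref{lemma4-main}. Writing $\underline{g}=e^{2\underline{u}}g$, the conformal formula \eqref{conformal-Einstein-tensor} gives $\mathfrak{g}[\underline{u}]=\frac{1}{n-2}G_{\underline{g}}$; since $\underline{g}^{-1}=e^{-2\underline{u}}g^{-1}$ and $\Gamma$ is a cone with vertex at the origin, the condition $\lambda_{\underline{g}}(G_{\underline{g}})\in\Gamma$ from \eqref{admissible-metric1} is equivalent to $\lambda(\mathfrak{g}[\underline{u}])=\lambda_g(\frac{1}{n-2}G_{\underline{g}})\in\Gamma$, that is, $\underline{u}$ is admissible for \eqref{conformal-equation-degree1}. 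Hence Proposition \ref{lemma4-main} (which uses the hypothesis $\Gamma\neq\Gamma_n$ through assumption \eqref{assumption-4} with $\varrho=1$) produces a unique smooth admissible solution $u$ of \eqref{dirichlet-equ1} with $u|_{\partial M}=\varphi$.

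Next I would unwind the conformal weights to recover \eqref{conformal-equ0}. Setting $\tilde{g}=e^{2u}g$, the formula \eqref{conformal-Einstein-tensor} gives $\mathfrak{g}[u]=\frac{1}{n-2}G_{\tilde{g}}$, so $\lambda_g(\mathfrak{g}[u])=\frac{e^{2u}}{n-2}\lambda_{\tilde{g}}(G_{\tilde{g}})$; by the degree-one homogeneity \eqref{homogeneous-1}, the equation $f(\lambda(\mathfrak{g}[u]))=\frac{\psi}{n-2}e^{2u}$ becomes $\frac{e^{2u}}{n-2}f(\lambda_{\tilde{g}}(G_{\tilde{g}}))=\frac{\psi}{n-2}e^{2u}$, that is $F(G_{\tilde{g}})=f(\lambda_{\tilde{g}}(G_{\tilde{g}}))=\psi$, which is \eqref{conformal-equ0}. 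The admissibility $\lambda(\mathfrak{g}[u])\in\Gamma$ then gives $\lambda_{\tilde{g}}(G_{\tilde{g}})\in\Gamma$ in $\bar M$ because $\Gamma$ is a cone, and $u|_{\partial M}=\varphi$ forces $\tilde{g}|_{\partial M}=e^{2\varphi_0}g|_{\partial M}=h$, which completes the argument.

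I do not expect a genuine obstacle at this stage: essentially all of the analytic content is hidden in Proposition \ref{lemma4-main}, whose proof is the hard part (the continuity method together with the interior and boundary a priori estimates of Sections \ref{statement-estimates}--\ref{interiori-estimate}, Evans--Krylov, and Schauder theory). The only points that require care are the bookkeeping of conformal factors, so that the degree-one homogeneity of $f$ makes the right-hand side $\frac{\psi}{n-2}e^{2u}$ consistent, and the observation that the global admissible metric in \eqref{admissible-metric1}, with no constraint imposed on its boundary behavior, is all that Proposition \ref{lemma4-main} needs.
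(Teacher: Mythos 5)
Your proposal is correct and matches the paper's approach: the paper simply observes that Theorem \ref{thm2-conformal} follows from the Dirichlet problem solved in Proposition \ref{lemma4-main}, and your argument spells out the (routine) conformal-factor bookkeeping that the paper leaves implicit. The only place where a reader might pause is your claim that $\lambda_{\underline{g}}(G_{\underline{g}})\in\Gamma$ is equivalent to $\lambda(\mathfrak{g}[\underline{u}])\in\Gamma$, and you justify it correctly via the cone property, so nothing is missing.
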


%Theorem \ref{thm2-conformal} is deduced from Proposition \ref{lemma4-main}.
%Comparing with Theorem \ref{thm1-dirichlet},  one only needs  in Proposition \ref{lemma4-main} admissible functions but without restriction to boundary value.
% The proof of this lemma is similar to Theorem 5.3 and Corollary 5.4 of \cite{Guan2008IMRN}.

% \begin{remark}
%If the cutoff function used in the proof of interior estimates are chosen to be %$\zeta\equiv1$, $\eta\equiv1$, then
 We let $\eta\equiv1$ in proof of interior estimates,  there is a positive constant $C$ depending on $|u|_{C^0(\bar M)}$ %$\sup_{\partial M}|\nabla u|$ 
such that
\begin{equation}
\label{globalc2c1}
\begin{aligned}
\sup_M |\nabla u|^2+\sup_M |\nabla^2 u| \leq C(1+\sup_{\partial M}|\nabla u|^2+\sup_{\partial M} |\nabla^2 u| ).
\end{aligned}
\end{equation}
%\end{remark}
  
  The boundary estimates for second derivatives have been already obtained in previous section.
 It remains to  prove
 \begin{lemma}
 \label{c0-bdyc1}
  For any $C^2$-admissible solution $u$ to Dirichlet problem \eqref{dirichlet-equ1}, we have
  \begin{equation}
\begin{aligned}
 \sup_M |u|\leq C_0', \nonumber
 \end{aligned}
\end{equation}
 where $C_0'$ is a uniformly positive constant.
 \end{lemma}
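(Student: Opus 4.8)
The plan is to establish a priori bounds $-C_0' \le u \le C_0'$ by constructing suitable barriers and invoking the comparison principle for the elliptic concave operator $f(\lambda(\mathfrak{g}[u]))$. For the upper bound, the natural candidate is the Aviles--McOwen metric: their theorem produces a complete conformal metric $\hat g = e^{2\hat u}g$ of constant negative scalar curvature on $M$, so $\hat u$ blows up at $\partial M$ and hence $\hat u \ge \varphi$ near $\partial M$. One checks that on a suitable compact exhaustion $\hat u$ serves as a supersolution of \eqref{dirichlet-equ1} (the relevant point being that negative scalar curvature forces $\lambda(\mathfrak{g}[\hat u])$ to lie outside $\overline{\Gamma^{\psi e^{2\hat u}/(n-2)}}$ since $\Gamma \subseteq \Gamma_1$ and \eqref{key1-main} relates the trace of $\mathfrak{g}$ to the admissible cone). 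Then $u \le \hat u$ on every compact subset by the comparison principle, and since $\hat u$ is locally bounded on $M$ we get a local upper bound; to get a \emph{global} upper bound $\sup_M u \le C_0'$ one uses that $\varphi$ is fixed on $\partial M$ together with the fact that $\mathrm{tr}_g\mathfrak{g}[u] = \Delta u + \tfrac{1}{n-2}\mathrm{tr}_g G_g + \tfrac{n-1}{2}|\nabla u|^2 \ge n f(\lambda(\mathfrak{g}[u])) = \tfrac{n}{n-2}\psi e^{2u} > 0$ by \eqref{key1-main}, which forces a differential inequality $\Delta u + C|\nabla u|^2 + C \ge c\, e^{2u}$; feeding this into a standard maximum-principle argument (e.g. at an interior maximum of $u$, or using the boundary value $\varphi$) yields $u \le C_0'$.

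For the lower bound, the clean approach is to use the given admissible function. Since there exists $\underline u \in C^2(\bar M)$ with $\lambda(\mathfrak{g}[\underline u]) \in \Gamma$ in $\bar M$ and $f$ is continuous and positive there, after replacing $\underline u$ by $\underline u - m$ for a large constant $m$ one has $f(\lambda(\mathfrak{g}[\underline u - m])) = f(\lambda(\mathfrak{g}[\underline u]) + \text{(lower-order corrections from the } |\nabla u|^2 \text{ and } du\otimes du \text{ terms, unchanged by the constant shift)})$; in fact $\mathfrak{g}[\underline u - m]$ has the same gradient-quadratic part and the Hessian part is unchanged, so $\lambda(\mathfrak{g}[\underline u - m]) = \lambda(\mathfrak{g}[\underline u])$ while the right side $\tfrac{\psi}{n-2}e^{2(\underline u - m)} \to 0$. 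Hence for $m$ large, $\underline u - m$ is an admissible subsolution of the PDE, and one may further arrange $\underline u - m \le \varphi$ on $\partial M$. The comparison principle then gives $u \ge \underline u - m$ on $\bar M$, i.e. $\inf_M u \ge -C_0'$.

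The main obstacle I anticipate is making the comparison principle rigorous at the boundary where $\varphi$ is a genuine finite boundary datum rather than $+\infty$: one must ensure the barriers actually dominate (resp. are dominated by) $\varphi$ on $\partial M$, which is why the constant shifts in $\underline u - m$ and the use of the blow-up of $\hat u$ near $\partial M$ are needed, and one must confirm that the comparison principle applies to the quasilinear concave operator here (this is standard given \eqref{elliptic} and \eqref{concave}, using that $\mathfrak{g}[u]$ depends on $u$ through the zeroth-order term $e^{2u}$ monotonically and through $|\nabla u|^2, du\otimes du$ in a controlled way — one linearizes along a segment and applies the classical maximum principle for linear elliptic operators). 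A secondary subtlety is verifying that the Aviles--McOwen supersolution genuinely sits outside the admissible superlevel set on all of the compact exhaustion; this uses only $\Gamma \subseteq \Gamma_1$ together with the sign of the scalar curvature, so it should go through without difficulty.
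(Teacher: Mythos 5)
Your approach is correct in substance but considerably more circuitous than the paper's, which obtains both bounds by a single short maximum-point argument: after WLOG taking $g$ admissible (replace $g$ by the admissible conformal metric $e^{2\underline u}g$), at an interior minimum of $u$ one has $\nabla u=0$ and $\nabla^2 u\ge0$, hence $\Delta u\, g - \nabla^2 u\ge 0$ and so $\mathfrak{g}[u]\ge \tfrac{1}{n-2}G_g$; matrix monotonicity of $F$ then gives $\tfrac{\psi}{n-2}e^{2u}\ge f(\lambda(\tfrac{1}{n-2}G_g))>0$, i.e.\ $u\ge -C'$ there, and symmetrically at an interior maximum $\mathfrak{g}[u]\le\tfrac{1}{n-2}G_g$ yields $u\le C''$. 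Combined with the boundary data $\varphi$ this is the whole proof. Your lower-bound argument — shift the admissible function $\underline u$ by a large constant $m$ (which leaves $\mathfrak{g}[\underline u-m]$ unchanged while killing the right-hand side $e^{2(\underline u-m)}$) to manufacture an admissible subsolution lying below $\varphi$, then compare — is correct and amounts to essentially the same computation without the WLOG reduction. Your upper-bound argument has two parts: the Aviles--McOwen supersolution, by itself, only yields a bound on each compact subset (since $\hat u\to+\infty$ at $\partial M$) and so cannot give a uniform $\sup_M u$ on $\bar M$; what does work is the trace inequality you then invoke. That argument is fine in spirit, but note the coefficients in your formula are off: the correct expansion is $\mathrm{tr}_g\mathfrak{g}[u]=(n-1)\Delta u+\tfrac{1}{n-2}\mathrm{tr}_g G_g+\tfrac{(n-1)(n-2)}{2}|\nabla u|^2$, not $\Delta u+\tfrac{1}{n-2}\mathrm{tr}_g G_g+\tfrac{n-1}{2}|\nabla u|^2$. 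With the correct coefficients, combining $\mathrm{tr}_g\mathfrak{g}[u]\ge n f(\lambda(\mathfrak{g}[u]))$ from \eqref{key1-main} with $\Delta u\le0$, $\nabla u=0$ at an interior max immediately gives the bound, so the Aviles--McOwen step should simply be dropped. The paper's proof is preferable here because it uses the full matrix inequality $\mathfrak{g}[u]\lessgtr\tfrac{1}{n-2}G_g$ rather than only its trace, treats upper and lower bounds completely symmetrically, and does not rely on \eqref{key1-main} or the Aviles--McOwen theorem.
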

 
 \begin{proof}
 Without loss of generality, we assume $g$ is admissible, i.e. $\lambda_g(G_g)\in\Gamma$.
 If $u(x_0)=\min_{\bar M}u$ for some $x_0\in M$ then $\frac{\psi(x_0)}{n-2}e^{2u(x_0)}\geq f(\lambda(\frac{1}{n-2}G_g))$.
 We get $u(x_0)\geq -C'$, and $\inf_M u\geq -\min\{C',\min_{\partial M}\varphi\}$. Similarly, $\sup_M u\leq C''.$
 \end{proof}

 \begin{lemma}
 \label{lemma3-main1}
 Let $u$ be a $C^2$-admissible solution to Dirichlet problem \eqref{dirichlet-equ1},
  there is a uniformly positive constant $C$ such that $\nabla_\nu u|_{\partial M}\leq C.$
% \begin{equation}\begin{aligned}
%\nabla_\nu u|_{\partial M}\leq C. \nonumber
%\end{aligned}\end{equation}
\end{lemma}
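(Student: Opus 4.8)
The plan is to construct an explicit upper barrier near the boundary. First I would fix a boundary point $x_0\in\partial M$ and work in the tubular neighborhood $M_\delta$ where the distance function $\mathrm{d}(x)=\mathrm{dist}_g(x,\partial M)$ is smooth and $|\nabla\mathrm{d}|\geq \tfrac12$. The candidate barrier is $\bar w=\varphi+\Theta(\mathrm{d})+K\rho^2$ for a one‑variable concave increasing function $\Theta$ with $\Theta(0)=0$, $\Theta'(0)$ large, $\Theta''<0$ large in absolute value (e.g. $\Theta(s)=\tfrac1\mu\log(1+\mu s)$ with $\mu$ large, or $\Theta(s)=As-Bs^2$ on a short interval), where $\rho(x)=\mathrm{dist}_g(x,x_0)$, $K$ is a large constant, and $\varphi$ is the (harmonic, or any fixed $C^\infty(\bar M)$) extension of the boundary data. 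The two things to verify are (i) $\bar w\geq u$ on $\partial M_\delta$ — on $\partial M\cap \partial M_\delta$ this holds since $\bar w=\varphi=u$ there plus $K\rho^2\geq0$, and on the interior part $\{\rho=\delta\}$ it holds because $K\delta^2$ dominates $\sup_M|u|+\sup_{M_\delta}|\varphi+\Theta|$, using the $C^0$ bound from Lemma \ref{c0-bdyc1}; and (ii) $\bar w$ is a supersolution in the sense that comparison applies.

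For step (ii) the natural tool is the concavity/ellipticity of the operator through the $C^0$–estimate mechanism already used in this paper: because $u$ is admissible, $F^{ij}=\tfrac{\partial F}{\partial a_{ij}}(\mathfrak g[u])>0$ and, by concavity and the normalization $f(\vec{\bf 1})=1$ together with \eqref{sumfi}, one has a uniform lower bound $\sum_i F^{ii}=F^{ij}g_{ij}\geq\kappa>0$ on admissible solutions. One then computes $\mathcal L(\bar w)=F^{ij}\nabla_{ij}\bar w+(F^{ij}A_{ij,p_l}-\psi_{p_l})\nabla_l\bar w$ and shows it is negative in $M_\delta$: the dominant term is $\Theta''(\mathrm{d})\,F^{ij}\nabla_i\mathrm{d}\nabla_j\mathrm{d}\leq \tfrac14\Theta''(\mathrm{d})\sum_i F^{ii}$, which is large and negative once $|\Theta''|$ is chosen large relative to $C_d$, $C_\rho$ and the first‑order coefficients (the $\varphi$– and $\rho$–terms, and the $A_{ij,p_l}$/$\psi_{p_l}$ terms, which are all controlled by $1+\sum_iF^{ii}$ using \eqref{key-assimption1} once $|\nabla\bar w|$ is bounded in $M_\delta$). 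Meanwhile the right‑hand side of \eqref{dirichlet-equ1} forces $f(\lambda(\mathfrak g[u]))=\tfrac{\psi}{n-2}e^{2u}$, so $u$ is a solution of a uniformly elliptic equation after the linearization; comparing via $\mathcal L$ (using concavity to pass from the difference of the nonlinear operators to $\mathcal L$ of the difference, exactly as in the double‑normal computation of Case 3 above) and the maximum principle in $M_\delta$ gives $u\leq\bar w$ in $M_\delta$. Since $u=\bar w$ at $x_0$ and $\nabla_\nu$ is the outward normal (so $\nabla_\nu\mathrm{d}<0$), we obtain $\nabla_\nu u(x_0)\leq\nabla_\nu\bar w(x_0)=\nabla_\nu\varphi(x_0)+\Theta'(0)\nabla_\nu\mathrm{d}(x_0)\leq C$, with $C$ independent of $x_0$.

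The main obstacle, and the point that needs care, is arranging that all the barrier constants — $\delta$, $K$, and the parameters in $\Theta$ — are chosen in the correct order and depend only on the allowed data ($|u|_{C^0(\bar M)}$, $|\varphi|_{C^3}$, the geometry of $(\bar M,g)$, $\psi$, and the structural constants of $f$, in particular $\kappa$), and \emph{not} on $\nabla u$ or $\nabla^2 u$; this is what makes the estimate genuinely a priori. Concretely one first fixes $\delta$ small so that $\mathrm{d}$ is smooth and the geometric quantities $C_d,C_\rho$ are controlled, then chooses $|\Theta''|$ (hence $\mu$ or $B$) large enough that $\mathcal L(\bar w)<0$ independently of the solution, then chooses $\Theta'(0)$ compatibly (so that $\Theta$ stays concave on $[0,\delta]$), and finally $K$ large to dominate on $\{\rho=\delta\}$. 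A secondary subtlety is that the coefficient of $\nabla_l\bar w$ in $\mathcal L$ involves $A_{ij,p_l}(x,u,\nabla u)$ evaluated at the solution; but by \eqref{key-assimption1} (or simply smoothness of $A$ and the $C^0$ and barrier‑forced $C^1$ control in $M_\delta$) these are bounded by $C(1+\sum_iF^{ii})$, which is absorbed by the large negative $\Theta''$ term. I expect the write‑up to be short since it parallels the boundary computations already carried out in Section \ref{interiori-estimate}.
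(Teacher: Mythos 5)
Your proposal has a genuine sign error that invalidates the conclusion. You construct an \emph{upper} barrier $\bar w$ with $\Theta$ increasing and concave (so $\mathcal L(\bar w)<0$, $\bar w$ a supersolution, $u\leq\bar w$ in $M_\delta$) and $u=\bar w$ at $x_0\in\partial M$, and you then write $\nabla_\nu u(x_0)\leq\nabla_\nu\bar w(x_0)$. But the inequality goes the other way: if $u-\bar w\leq 0$ in $M_\delta$ with $(u-\bar w)(x_0)=0$, then $x_0$ is a maximum of $u-\bar w$, and with $\nu$ the \emph{outward} normal one gets $\nabla_\nu(u-\bar w)(x_0)\geq 0$, i.e. $\nabla_\nu u\geq\nabla_\nu\bar w=\nabla_\nu\varphi-\Theta'(0)$. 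That is a \emph{lower} bound on the normal derivative, which is Lemma \ref{bdyc1}, not the upper bound stated here.

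To get the upper bound $\nabla_\nu u\leq C$ one must construct a \emph{lower} barrier (local subsolution) $\underline w$ with $\underline w\leq u$ in $\Omega_\delta$ and $\underline w=\varphi=u$ on $\partial M$; then $\nabla_\nu(u-\underline w)\leq 0$, i.e. $\nabla_\nu u\leq\nabla_\nu\underline w$. This is exactly what the paper does, taking $\underline w=h_1+\varphi$ with $h_1(\mathrm d)=\log\frac{\delta^2}{\mathrm d+\delta^2}$ (so $h_1'<0$, $h_1''>0$: \emph{decreasing and convex} in $\mathrm d$ — the mirror image of your $\Theta$). Note also that your linearized-operator computation, while adequate for supersolutions compared via $\mathrm{tr}_g$, does not address the actual mechanism the paper needs for a subsolution of the fully nonlinear equation \eqref{dirichlet-equ1}: the exponential weight $e^{2u}$ on the right-hand side. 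The reason $h_1+\varphi$ works is Lemma \ref{lemma-k5} together with \eqref{key4321}: $U[h_1+\varphi]\geq c\,h_1'^2\,g$, while on the right-hand side the factor $e^{2h_1}=(\delta^2/(\mathrm d+\delta^2))^2$ is tiny, so $h_1'^2 e^{-2h_1}=\delta^{-4}\to\infty$ makes $h_1+\varphi$ a genuine subsolution of the nonlinear equation; the comparison on $\{\mathrm d=\delta\}$ then comes from the $C^0$ bound (Lemma \ref{c0-bdyc1}) exactly as you anticipated. Fixing your argument therefore requires not merely flipping a sign at the end but redesigning the barrier and the supersolution/subsolution verification.
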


\begin{lemma}
 \label{bdyc1}
  For any $C^2$-admissible solution $u$ to Dirichlet problem \eqref{dirichlet-equ1}, we have
  \begin{equation}
\begin{aligned}
 -C_1'\leq  \nabla_{\nu}u|_{\partial M}. \nonumber
 \end{aligned}
\end{equation}
 where $C_1'$ is a uniformly positive constant.
 \end{lemma}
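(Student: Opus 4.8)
Lemma \ref{bdyc1} asserts a lower bound for the interior normal derivative $\nabla_\nu u$ on $\partial M$ for admissible solutions of the Dirichlet problem \eqref{dirichlet-equ1}. Combined with Lemma \ref{lemma3-main1}, which gives the upper bound, this completes the boundary gradient estimate; together with Lemma \ref{c0-bdyc1} and \eqref{globalc2c1}, one then obtains full $C^2$ estimates up to the boundary.

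The plan is to construct a lower barrier. First I would fix a boundary point $x_0 \in \partial M$ and work in the collar neighborhood $M_\delta = \{\rho < \delta\}$ (or a one-sided neighborhood using the distance $\mathrm{d}(x)$ to $\partial M$), where $\mathrm{d}$ is smooth. The candidate barrier is of the form $\ul{w} = \varphi - A\,\mathrm{d} + B\,\mathrm{d}^2$ (possibly with an extra $-C\rho^2$ term to dominate tangential directions, exactly as in the Case 2 barrier of the boundary estimates section), where $A, B > 0$ are large constants to be chosen, and $\ul{w}|_{\partial M} = \varphi$. One wants $\ul{w}$ to be a subsolution: $f(\lambda(\mathfrak{g}[\ul{w}])) \geq \frac{\psi}{n-2}e^{2\ul{w}}$ in $M_\delta$ and $\ul{w} \leq u$ on $\partial M_\delta$. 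Then the maximum principle (comparing $u$ and $\ul{w}$, using that $F$ is elliptic and $u = \varphi = \ul{w}$ on $\partial M$, $u \geq \ul{w}$ on the inner boundary since $u$ is bounded below by Lemma \ref{c0-bdyc1}) gives $u \geq \ul{w}$ in $M_\delta$, hence $\nabla_\nu u \leq \nabla_\nu \ul{w} = \nabla_\nu \varphi - A$ is \emph{not} what we want — rather, since $\nu$ is the \emph{outer} normal and $u - \ul{w} \geq 0$ vanishes at $x_0$, we get $\nabla_\nu(u - \ul{w})(x_0) \geq 0$, i.e.\ $\nabla_\nu u(x_0) \geq \nabla_\nu \ul{w}(x_0) = \nabla_\nu \varphi(x_0) - A$, which is the desired lower bound $\nabla_\nu u|_{\partial M} \geq -C_1'$.

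The key computation is verifying the subsolution inequality for $\ul{w}$. Here I would expand $\mathfrak{g}[\ul{w}] = \nabla^2 \ul{w} + A(x,\ul{w},\nabla \ul{w})$ (recall $\mathfrak{g}[u] = \frac{1}{n-2}G_g + U[u]$ with $U[u]$ quadratic in $\nabla u$). The leading term from $-A\,\mathrm{d}$ contributes $-A\nabla^2\mathrm{d}$ to the Hessian, which in the normal-normal direction behaves like $+A$ up to the boundary curvature terms (since $\mathrm{d}$ is concave toward the interior in normal direction at most by $O(1)$), while the first-order quadratic terms $|\nabla \ul{w}|^2$ are $O(A^2)$; one must arrange the signs so that for $A$ large, $\lambda(\mathfrak{g}[\ul{w}])$ stays in $\Gamma$ and $f$ of it is large. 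The homogeneity of degree one of $f$ and the estimate $nf(\lambda) \leq \sum \lambda_i$ from \eqref{key1-main} are useful: it suffices to bound $\tr_g \mathfrak{g}[\ul{w}]$ from below by a large constant, and $\tr_g(-A\nabla^2\mathrm{d}) = -A\Delta\mathrm{d} \geq -CA$ which goes the wrong way, so instead one exploits that $\Delta \ul{w}\, g - \nabla^2 \ul{w}$ appears in $U$ with the Laplacian term dominating, making the net contribution $\sim (n-1)A\,\Delta\mathrm{d}$-type or the $du\otimes du$ term positive — the precise bookkeeping mirrors the barrier analysis already carried out in Section \ref{interiori-estimate}. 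The quadratic gradient terms with the correct sign ($\frac{n-3}{2}|\nabla\ul{w}|^2 g + d\ul{w}\otimes d\ul{w}$, positive semidefinite contributions to the relevant eigenvalues when $n \geq 3$) actually help push $\lambda$ deeper into $\Gamma$.

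The main obstacle I anticipate is controlling the tangential eigenvalue directions of $\mathfrak{g}[\ul{w}]$: the barrier $\varphi - A\mathrm{d} + B\mathrm{d}^2$ has $\nabla^2$ contributions in tangential directions coming only from $\varphi$ (bounded) and from the fundamental form of the level sets of $\mathrm{d}$ scaled by $A$, which may have a bad sign, so one may need the auxiliary $-C\rho^2$ term or to choose the collar $\delta$ small enough (depending on $A$) so that on $M_\delta$ the level sets are nearly totally geodesic and the admissibility $\lambda(\mathfrak{g}[\ul{w}]) \in \Gamma$ is preserved. Since $\Gamma \supseteq \Gamma_n$ is an open cone containing the positive ray, dominating by a large multiple of the identity is the robust strategy, and the $e^{2\ul{w}}$ factor on the right is bounded thanks to Lemma \ref{c0-bdyc1}, so no new difficulty there. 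I would finish by invoking the comparison principle and Hopf's lemma exactly as in Case 2 of the boundary estimates above.
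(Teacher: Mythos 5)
Your approach has a fundamental direction error that your two sign mistakes conceal from you. A \emph{lower} barrier $\underline{w}\leq u$ with $\underline{w}=u=\varphi$ on $\partial M$ yields $\nabla_\nu(u-\underline{w})\leq 0$ (the non-negative function $u-\underline{w}$ vanishes on $\partial M$, so its outward normal derivative is $\leq 0$, not $\geq 0$ as you write), and since $\nabla_\nu\mathrm{d}=-1$ (not $+1$: $\nabla\mathrm{d}$ points inward, $\nu$ outward), the barrier $\underline{w}=\varphi-A\mathrm{d}+B\mathrm{d}^2$ has $\nabla_\nu\underline{w}=\nabla_\nu\varphi + A$, not $\nabla_\nu\varphi - A$. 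Correcting both signs, your argument gives $\nabla_\nu u\leq \nabla_\nu\varphi + A$, i.e.\ an \emph{upper} bound for $\nabla_\nu u$ — this is exactly Lemma~\ref{lemma3-main1}, which the paper already proves with the subsolution $h_1+\varphi$, and not the lower bound claimed in Lemma~\ref{bdyc1}.

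To prove the lower bound one needs an \emph{upper} barrier $\bar w\geq u$, and here a new idea is required because one cannot simply ask $\bar w$ to be a supersolution of the fully nonlinear equation: for $f(\lambda(\mathfrak{g}[\bar w]))$ to be small one would want $\lambda(\mathfrak{g}[\bar w])$ near $\partial\Gamma$ or outside $\bar\Gamma$, where $f$ is not defined. The paper's device is to drop to the trace: since $\Gamma\subseteq\Gamma_1$, every admissible solution has $\mathrm{tr}_g(\mathfrak{g}[u])>0$, and the map $v\mapsto\mathrm{tr}_g(\mathfrak{g}[v])=\mathrm{const}+(n-1)\Delta v+\tfrac{(n-1)(n-2)}{2}|\nabla v|^2$ is quasilinear elliptic. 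Taking $\bar w=\bar h(\mathrm{d})+\varphi$ with $\bar h(\mathrm{d})=\tfrac{1}{n-2}\log(1+\mathrm{d}/\delta^2)$, the requirements $\bar h''+\tfrac{n-2}{2}\bar h'^2<-c_1\bar h'^2$, $\bar h'^2\to\infty$ as $\delta\to 0$, and $\bar h(\delta)\geq\sup_M(u-\varphi)$ force $\mathrm{tr}_g(\mathfrak{g}[\bar w])\leq 0$ in $\Omega_\delta$ together with $\bar w\geq u$ on $\partial\Omega_\delta$; the maximum principle for the quasilinear trace operator then gives $u\leq\bar w$ in $\Omega_\delta$, and Hopf's lemma yields $\nabla_\nu u\geq\nabla_\nu\bar w=\nabla_\nu\varphi-\tfrac{1}{(n-2)\delta^2}$, a uniform lower bound. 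Without this trace-comparison trick, and with your direction reversed, the proposal does not prove the statement.
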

 
The proof of Lemmas \ref{lemma3-main1} and \ref{bdyc1} is standard by combining the maximum principle with existence of local lower and upper barriers near boundary.
% (see e.g. \cite{GT1983}). The construction here is inspired by  \cite{Guan2008IMRN}.

 Let $\mathrm{d}(x)$ be as defined above %in  Theorem \ref{thm1-asymptotic} %\eqref{distance1-def} 
 the distance function to boundary. Then we know $\mathrm{d}(x)$ is smooth near the boundary (whenever $\partial M$ is smooth), i.e., for small $\delta>0$, $\mathrm{d}(x)$ is smooth in $$\Omega_\delta:=\{x\in M: 0<\mathrm{d}(x)<\delta\}.$$

Let $\underline{h}$ be of the form $\underline{h}=\underline{h}(\mathrm{d})$. By direct computation one obtains

\begin{lemma}
\label{lemma1-main}

$U[\underline{h}]= (\underline{h}'' +\frac{(n-3)\underline{h}'^2}{2})|\nabla \mathrm{d}|^2 g+(\underline{h}'^2-\underline{h}'')d\mathrm{d}\otimes d\mathrm{d}+\underline{h}'(\Delta \mathrm{d}g-\nabla^2 \mathrm{d}).$

\end{lemma}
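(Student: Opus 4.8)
The plan is to reduce the whole identity to the chain rule, since $\underline{h}=\underline{h}(\mathrm{d})$ depends on $x$ only through the distance function $\mathrm{d}$, which is smooth on the collar $\Omega_\delta$ introduced above. First I would record the first- and second-order covariant derivatives of $\underline{h}$ in terms of those of $\mathrm{d}$. Differentiating $\underline{h}(\mathrm{d})$ gives $\nabla\underline{h}=\underline{h}'(\mathrm{d})\,\nabla\mathrm{d}$, hence $d\underline{h}=\underline{h}'\,d\mathrm{d}$, and a second covariant differentiation yields
\[
\nabla^2\underline{h}=\underline{h}''\,d\mathrm{d}\otimes d\mathrm{d}+\underline{h}'\,\nabla^2\mathrm{d}.
\]
Taking the $g$-trace gives $\Delta\underline{h}=\underline{h}''|\nabla\mathrm{d}|^2+\underline{h}'\,\Delta\mathrm{d}$, while $|\nabla\underline{h}|^2=\underline{h}'^2|\nabla\mathrm{d}|^2$ and $d\underline{h}\otimes d\underline{h}=\underline{h}'^2\,d\mathrm{d}\otimes d\mathrm{d}$.

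Next I would substitute these four expressions into the definition $U[\underline{h}]=\Delta\underline{h}\,g-\nabla^2\underline{h}+\frac{n-3}{2}|\nabla\underline{h}|^2g+d\underline{h}\otimes d\underline{h}$ and collect terms by type. The terms proportional to $g$ assemble into $\bigl(\underline{h}''+\frac{n-3}{2}\underline{h}'^2\bigr)|\nabla\mathrm{d}|^2\,g+\underline{h}'\,\Delta\mathrm{d}\,g$; the contributions proportional to $d\mathrm{d}\otimes d\mathrm{d}$, coming from $-\nabla^2\underline{h}$ and from $d\underline{h}\otimes d\underline{h}$, combine to $(\underline{h}'^2-\underline{h}'')\,d\mathrm{d}\otimes d\mathrm{d}$; and the only remaining term is $-\underline{h}'\,\nabla^2\mathrm{d}$. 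Grouping the two pieces carrying the factor $\underline{h}'$ as $\underline{h}'(\Delta\mathrm{d}\,g-\nabla^2\mathrm{d})$ gives precisely the asserted formula.

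No substantive obstacle is expected: the statement is a purely algebraic consequence of the chain rule, with the only computational input being the two identities for $\nabla\underline{h}$ and $\nabla^2\underline{h}$ above. The single caveat worth noting is that the computation is valid on any neighbourhood of $\partial M$ on which $\mathrm{d}$ is of class $C^2$ — in particular on $\Omega_\delta$ for $\delta$ small — which is exactly the region where the lemma is later used to build barrier functions.
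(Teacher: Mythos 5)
Your proof is correct and is exactly the "direct computation" the paper invokes without writing out: chain rule for $\nabla\underline{h}$ and $\nabla^2\underline{h}$ in terms of $\mathrm{d}$, substitution into the definition of $U$, and regrouping. No gap, no deviation.
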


%We hope to construct a (local) barrier $\underline{h}$ on $\bar\Omega_\delta$ satisfying, for $0<\delta\ll1$,
%\begin{enumerate}
%\item $\underline{h}'^2\geq \underline{h}'',$ $\underline{h}''+\frac{n-3}{2}\underline{h}'^2>c_0\underline{h}'^2$ in $\Omega_\delta$ for some $c_0>0$.
%\item $\underline{h}'^2   e^{-2\underline{h}}$ can be arbitrary large in $\Omega_\delta$ if $\delta$ is sufficiently small.
%\item  In $\Omega_\delta$, $\underline{h}'^2   e^{-2\underline{h}}\gg1$ and $\underline{h}'^2\gg1$.
%\item $\underline{h}(\delta)\leq \inf_M(u-\varphi)$. %for $0<\delta\ll1$.
%\end{enumerate}

 %It is interesting to find $\underline{h}$ such that $\underline{h}''$ and $\underline{h}'^2$ are comparable.
  Following \cite{Guan2008IMRN}, we set
\begin{equation}
\begin{aligned}
h_k(\mathrm{d})=  \log \frac{k\delta^2}{k\mathrm{d}+\delta^2}.   \nonumber
\end{aligned}
\end{equation}
% The $h_k$ satisfies the above requirements, as %
One can check that
  for any $k\geq1$,
\begin{equation}
\label{check-1}
\begin{aligned}
h'_k(\mathrm{d}) =\,& -\frac{ k}{k\mathrm{d}+\delta^2}, \mbox{  }
 h''_k(\mathrm{d})=\frac{  k^2}{(k\mathrm{d}+\delta^2)^2}, 
 \\ h_k'^2(\mathrm{d})= \,& h_k''(\mathrm{d}), \mbox{  } h_k'^2 e^{-2h_k}= \delta^{-4}, \mbox{  }
h_k(\delta)\leq \log\delta.  %\nonumber
\end{aligned}
\end{equation}

\begin{lemma}
\label{lemma-k5}
Let %$v=\mathrm{d}$ in Lemma \ref{lemma1-main},
 $\varphi\in C^2(\bar M)$ and $k>0$ fixed.  Then  for  $0<\delta\ll1$
\begin{equation}
\label{keyeq2-main}
\begin{aligned}
\,& U[h_k]\geq   \frac{(n-1) k^2}{8(k\mathrm{d}+\delta^2)^2}g,  %\mbox{ in } \Omega_\delta,
%\end{aligned}\end{equation}
%\begin{equation} \label{keyeq3-main}\begin{aligned}
\,& U[h_k+\varphi]\geq  \frac{ (n-1) k^2}{16(k\mathrm{d}+\delta^2)^2}g,  \mbox{ in } \Omega_\delta,
\end{aligned}
\end{equation}
%in $\Omega_\delta$.
\end{lemma}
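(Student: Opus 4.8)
The plan is to plug $\underline h=h_k$ into the identity of Lemma~\ref{lemma1-main} and use the two facts recorded in \eqref{check-1}, namely $h_k'^2=h_k''$ and $h_k'=-\frac{k}{k\mathrm{d}+\delta^2}$. The equality $h_k'^2=h_k''$ kills the anisotropic term $(\underline h'^2-\underline h'')\,d\mathrm{d}\otimes d\mathrm{d}$, and since $h_k''+\frac{n-3}{2}h_k'^2=\frac{n-1}{2}h_k'^2$, Lemma~\ref{lemma1-main} collapses to
\[
U[h_k]=\frac{n-1}{2}\,h_k'^2\,|\nabla\mathrm{d}|^2\,g+h_k'\,(\Delta\mathrm{d}\,g-\nabla^2\mathrm{d}).
\]
For $\delta$ small the distance function $\mathrm{d}$ is smooth on $\Omega_\delta$ with $|\nabla\mathrm{d}|\equiv1$ there, and $\Delta\mathrm{d}\,g-\nabla^2\mathrm{d}$ is bounded on $\Omega_\delta$ by a constant $C_0$ depending only on $(\bar M,g)$ and $\partial M$ (and not on $\delta$, since the tubes $\Omega_\delta$ shrink as $\delta\downarrow0$). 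Therefore, as symmetric $2$-tensors,
\[
U[h_k]\;\geq\;\frac{(n-1)k^2}{2(k\mathrm{d}+\delta^2)^2}\,g-\frac{C_0k}{k\mathrm{d}+\delta^2}\,g\;=\;\frac{k}{k\mathrm{d}+\delta^2}\Big(\frac{(n-1)k}{2(k\mathrm{d}+\delta^2)}-C_0\Big)g.
\]

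Next I would use that on $\Omega_\delta$ one has $0<\mathrm{d}<\delta$, hence $k\mathrm{d}+\delta^2<\delta(k+\delta)\to0$ as $\delta\downarrow0$ with $k$ fixed; so for $\delta$ small enough the bracket above is at least $\frac{(n-1)k}{8(k\mathrm{d}+\delta^2)}$, which gives the first inequality of \eqref{keyeq2-main}. For the second inequality I would set $u=h_k+\varphi$ and expand: since $\nabla h_k=h_k'\nabla\mathrm{d}$, the tensor $U[h_k+\varphi]$ equals $U[h_k]$ plus (i) the linear terms $\Delta\varphi\,g-\nabla^2\varphi$, (ii) the pure quadratic terms $\frac{n-3}{2}|\nabla\varphi|^2g+d\varphi\otimes d\varphi$, both bounded on $\Omega_\delta$ uniformly in $\delta$, and (iii) the cross terms $(n-3)h_k'(\nabla\mathrm{d}\cdot\nabla\varphi)\,g+h_k'\,(d\mathrm{d}\otimes d\varphi+d\varphi\otimes d\mathrm{d})$, which are controlled as tensors by $C_1|h_k'|\,g=C_1\frac{k}{k\mathrm{d}+\delta^2}g$ with $C_1$ depending on $|\varphi|_{C^1(\bar M)}$. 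All of these errors are of order $(k\mathrm{d}+\delta^2)^{-1}$ or $O(1)$, hence dominated by the leading term $\frac{(n-1)k^2}{2(k\mathrm{d}+\delta^2)^2}g$ of $U[h_k]$ once $\delta$ is small, exactly as before; absorbing them leaves $U[h_k+\varphi]\geq\frac{(n-1)k^2}{16(k\mathrm{d}+\delta^2)^2}g$, which is the second inequality.

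The argument is elementary modulo Lemma~\ref{lemma1-main} and \eqref{check-1}; the only point that needs attention is the uniformity in $\delta$. One must record that the bounds for $\Delta\mathrm{d}\,g-\nabla^2\mathrm{d}$, for the $C^2$-norm of $\varphi$, and for $|\nabla\mathrm{d}|$ can be taken independent of $\delta$ on the shrinking tubes $\Omega_\delta$, so that the comparison between the positive leading term of size $(k\mathrm{d}+\delta^2)^{-2}$ and the lower-order error terms goes through for all sufficiently small $\delta$. I do not anticipate any genuine obstacle beyond this bookkeeping.
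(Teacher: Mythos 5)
Your proof is correct and follows essentially the same route as the paper: substitute $h_k$ into Lemma~\ref{lemma1-main}, use $h_k''=h_k'^2$ to simplify, absorb the bounded error terms $h_k'(\Delta\mathrm{d}\,g-\nabla^2\mathrm{d})$ and (for the second inequality) the $O(1)$ and $O(|h_k'|)$ cross and pure-$\varphi$ terms into the leading $O(h_k'^2)$ term, which dominates once $\delta$ is small since $-h_k'\gg1$ on $\Omega_\delta$. The only cosmetic difference is that you take $|\nabla\mathrm{d}|\equiv1$ on the collar while the paper uses the weaker $|\nabla\mathrm{d}|\geq1/\sqrt2$; both suffice for the stated constants.
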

\begin{proof}

Since $-h_k'=\frac{ k}{k\mathrm{d}+\delta^2} \geq 
\frac{k}{k\delta+\delta^2}$ in $\Omega_\delta$, so $$-h_k'\gg1\mbox{ whenever } 0<\delta\ll1;$$
 moreover $|\nabla \mathrm{d}|\geq \frac{1}{\sqrt{2}}$ on such  $\Omega_\delta$.
Notice $\Delta\mathrm{d}g-\nabla^2\mathrm{d}$ is bounded in $\bar M$,
we  can get the first inequaliry of \eqref{keyeq2-main}.
%Next, we prove \eqref{keyeq3-main}.
Next, using
\begin{equation}
\begin{aligned}
U[h_k+\varphi]=U[h_k]+U[\varphi]+h_k'(d\mathrm{d}\otimes d\varphi+d\varphi\otimes d\mathrm{d}+(n-3)g(\nabla\mathrm{d},\nabla\varphi) g), \nonumber
\end{aligned}
\end{equation}
 we have the second one of \eqref{keyeq2-main} by the same argument.

%The \eqref{keyeq4-main} can be found in \cite{Guan2008IMRN}.
\end{proof}

%By Lemma \ref{lemma3.4}, if $\lambda(G_g)\in \Gamma$, then in $\Omega_\delta$
For $0<\delta\ll1$ satisfying $\frac{G_g}{n-2}\leq \frac{(n-1) k^2}{32(k\mathrm{d}+\delta^2)^2}g$, we have by \eqref{g-u}
%\begin{equation} \begin{aligned}
%f(\lambda(\mathfrak{g}[h_k]))\geq F(  \frac{(n-1) k^2}{16(k\mathrm{d}+\delta^2)}g)= \frac{(n-1)k^2}{16(k\mathrm{d}+\delta^2)}\nonumber \end{aligned} \end{equation}
\begin{equation}
\begin{aligned}
f(\lambda(\mathfrak{g}[h_k+\varphi]))\geq F(\frac{(n-1) k^2}{32(k\mathrm{d}+\delta^2)^2}g)=  \frac{(n-1) k^2}{32(k\mathrm{d}+\delta^2)^2}.  \nonumber
\end{aligned}
\end{equation}
Observe as in \eqref{check-1} that 
$ \frac{k^2}{(k\mathrm{d}+\delta^2)^2}e^{-2h_k}=  \delta^{-4}\rightarrow+\infty \mbox{ as } \delta\rightarrow 0^+.$
So
%\begin{equation}\begin{aligned}
%f(\lambda(\mathfrak{g}[h_k))\geq \frac{\psi}{n-2} e^{2h_k} \mbox{ in } \Omega_\delta,  \nonumber
%\end{aligned}\end{equation}
\begin{equation}
\label{key4321}
\begin{aligned}
f(\lambda(\mathfrak{g}[h_k+\varphi]))\geq \frac{\psi}{n-2} e^{2(h_k+\varphi)} \mbox{ in } \Omega_\delta, \mbox{ for } 0<\delta\ll1.
\end{aligned}
\end{equation}

%Next we complete the proof of Lemma \ref{lemma3-main1}.
\begin{proof}
[Proof of Lemma \ref{lemma3-main1}]
Let  $k=1$ $({h}_1(0)=0)$ and $$\underline{w}=h_1+\varphi =\log\frac{\delta^2}{\mathrm{d}+\delta^2}+\varphi.$$
%Clearly,  $\underline{w}=\varphi$ on $\partial M$.
Near the boundary for small $\delta$
\begin{equation}
\label{keyeq4-main}
\begin{aligned}
\underline{w}=\log\frac{\delta}{1+\delta}+ \varphi \leq u \mbox{ on } \{\mathrm{d}=\delta\}.
\end{aligned}
\end{equation}
Such $\delta$ exists according to Lemma \ref{c0-bdyc1}. 
 Together with \eqref{key4321}, \eqref{keyeq4-main}, and  $\underline{w}=\varphi$ on $\partial M$, 
 we apply maximum principle to the equation on $\Omega_\delta$ to obtain
\begin{equation}
\label{keyeq8-main}
\begin{aligned}
 u\geq \underline{w}   \mbox{ in } \Omega_\delta. \nonumber
 \end{aligned}
\end{equation}
Couple with  $u=\underline{w}=\varphi$ on $\partial M$, we get $\nabla_\nu (u- \underline{w})\leq0$ on $\partial M$. 
%Since $u=\underline{w}=\varphi$ on $\partial M$.
%which completes the proof. %by Hopf lemma.
\end{proof}

 \begin{proof}
 [Proof of Lemma \ref{bdyc1}]
 It suffices to construct (local) upper barrier near boundary. 
 %Let ${w}^+$ be the solution to 
%  \begin{equation}  \label{supersolution-2} \begin{aligned}
% \,& \mathrm{tr}_g(\mathfrak{g}[{{w}^+}])=0 \mbox{ in } M,  \,& {w}^+=\varphi \mbox{ on } \partial M.
%\end{aligned}\end{equation}
%The solvability of \eqref{supersolution-2} can be found in \cite{GT1983}. Maximum principle yields $u\leq {w}^+$, and thus $ {w}^+$ is a upper barrier,  $\nabla_\nu(u-{w}^+)|_{\partial M}\geq 0$.
 %In the proof of the Lemma 5.2 in \cite{Guan2008IMRN}, Guan constructed the desired (global) upper barriers. 
 Here we give a  local upper barrier analogous to that used
  in the proof of Lemma \ref{lemma3-main1}. Near boundary, we let 
% $$\overline{w}=-\frac{1}{n-2}h_1(\mathrm{d})+\varphi=\varphi-\frac{1}{n-2}\log\frac{\delta^2}{\mathrm{d}+\delta^2}.$$
 $$\bar{w}=\bar{h}+\varphi$$
% for $\bar{h}=\bar{h}(\mathrm{d})$ to be picked later. 
% \begin{equation}\begin{aligned}
%\frac{1}{n-1}\mathrm{tr}_g(U[v])=\Delta v+\frac{n-2}{2}|\nabla v|^2 \mbox{ for } v\in C^2, \nonumber
%\end{aligned}\end{equation}
 %which gives $u\leq\bar w$ near boundary. 
%From Lemma \ref{c0-bdyc1} again,
%It suffices to construct $\bar h$ satisfying 
%\begin{enumerate}
%\item   $\bar h'^2$ and $-\bar h''$ shall be comparable.
%\item $\bar h''+\frac{n-2}{2}\bar h'^2<-c_1\bar h'^2$ in $\Omega_\delta$ for some $c_1>0$.
%\item $\bar h'^2\rightarrow+\infty$ as $x\rightarrow\partial M$.
%\item  $\bar h'^2\rightarrow+\infty$ in $\Omega_\delta$ as $\delta\rightarrow 0^+$.
%\item   $\bar{h}(0)=0$, $\bar h(\delta) \geq \sup_{M}(u-\varphi)$ for $0<\delta\ll1$.
%\end{enumerate}
%Such conditions are all fulfilled if 
where $\bar h(d) =\frac{1}{n-2}\log(1+\frac{\mathrm{d}}{\delta^2})$ or $\bar h(d) =  \frac{1}{n-2}\log(1+\frac{k \mathrm{d}}{\delta}) $ with $ k\geq e^{(n-2)\sup_M(u-\varphi)}-1$.
The construction of barriers of this type is standard which can  be found in textbooks (see e.g. \cite{GT1983}). 
%The (local) barrier $\bar h$ would be obtained if one could find $\bar h$ satisfying 
Straightforward computation gives
  \begin{equation}
\begin{aligned}
\frac{1}{n-1}\mathrm{tr}_g(U[\bar{h}])=\bar{h}' \Delta\mathrm{d} +(\bar h''+\frac{n-2}{2}\bar h'^2)|\nabla \mathrm{d}|^2, \nonumber
\end{aligned}
\end{equation}
 \begin{equation}
\begin{aligned}
U[w]=U[\bar{h}]+U[\varphi]+\bar{h}' [d\mathrm{d}\otimes d\varphi+d\varphi\otimes d\mathrm{d}+(n-3)g(\nabla\mathrm{d},\nabla\varphi) g]. \nonumber
\end{aligned}
\end{equation}
Thus
\begin{equation}
\begin{aligned}
\mathrm{tr}_g(\mathfrak{g}[\bar h+\varphi])\leq0 
\mbox{ in } \Omega_\delta, \mbox{ }
\bar h|_{\partial M}\geq0, \mbox{ }
(\bar h+\varphi-u)|_{\{\mathrm{d}=\delta\}}\geq0. \nonumber
\end{aligned}
\end{equation}
The proof is complete.
 \end{proof}

\begin{proof}[Proof of Proposition \ref{lemma4-main}]
%The uniqueness %of admissible solution
% follows from maximum principle.
Let $\underline{\psi}=f(\lambda(G_g))$.  For $t\in [0,1]$
 we consider 
\begin{equation}
\label{Dirichlet-t}
\begin{aligned}
\,& f(\lambda(\frac{1}{n-2}G_{e^{2u^t}g}) = \frac{1}{{n-2}}\left({t\psi}+(1-t)\underline{\psi} \right)e^{2u^t} \mbox{ in } M, \,&
 {u}^t=t\varphi \mbox{ on } \partial M.
\end{aligned}
\end{equation}
Set ${\bf S}=\{t\in [0,1]: \mbox{For $t$ the Dirichlet problem \eqref{Dirichlet-t} is classically solvable}\}.$

Clearly $0\in {\bf S}$ ($u^0\equiv0$). The openess is classical. By Proposition \ref{keykey}  the linearized operator $\mathcal{L}_t: C^{k+2,\alpha}(\bar M)\rightarrow C^{k,\alpha}(\bar M)$  is invertible ($t\in {\bf S}$), thus implying the openess of ${\bf S}$. 
Combining with Lemmas \ref{c0-bdyc1},  \ref{lemma3-main1}, \ref{bdyc1}, Theorem \ref{thm2-bdy}, and \eqref{morry-1}, \eqref{globalc2c1},
 we have
$$|u|_{C^2(\bar M)}\leq C, \mbox{ independent of $u$ and its derivatives}.$$
Using Evans-Krylov theorem \cite{Evans82,Krylov83} and Schauder theory (see \cite{GT1983}), ${\bf S}$ is closed.  So ${\bf S}=[0,1]$. 
%We complete the proof.
\end{proof}
 
%Theorem \ref{thm2-conformal} is deduced from Proposition \ref{lemma4-main}.

%Next  we complete the proof of Theorem \ref{thm0-conformal}.
%Comparing with Theorem \ref{thm1-dirichlet},  in Proposition \ref{lemma4-main} 
%one only needs admissible functions but without restriction to boundary value.
% The proof of this lemma is similar to Theorem 5.3 and Corollary 5.4 of \cite{Guan2008IMRN}.
 % \vspace{0.5mm}
The following theorem %due to Aviles-McOwen, 
% Aviles-McOwen's theorem mentioned above
plays an important role in the proof of Theorem \ref{thm0-conformal}.
\begin{theorem}[\cite{Aviles1988McOwen}]
\label{thm-AM}
Let $(M,g)$ be a compact Riemannian manifold of dimension $n\geq 3$ with smooth boundary. 
For any negative constant $-c$ $(c>0)$,
there exists on $M$ a smooth complete  metric $\tilde{g}=e^{2\tilde{u}}g$ of scalar curvature $-c$.
\end{theorem}

This theorem indicates that  there is a smooth function $\tilde{u}$ satisfying
\begin{equation}
\label{scalar-equ1}
\begin{aligned}
\,&\mathrm{tr}_g (\mathfrak{g}[\tilde{u}])=\mathrm{tr}_g(\frac{G_g}{n-2}+U[\tilde{u}])=\frac{c}{2}e^{2\tilde{u}} \mbox{ in } M, \,&
\tilde{u}=+\infty \mbox{ on } \partial M.
\end{aligned}
\end{equation}
It can be viewed as a supersolution of approximate Dirichlet problems, thereby providing, on each compact subset, a uniform upper bound for approximate solutions. 

 \begin{proof}
 [Proof of Theorem \ref{thm0-conformal}]
 The proof is standard once one established interior estimates.
 Let's consider approximate Dirichlet problems
 \begin{equation}
\label{dirichlet-equk}
\begin{aligned}
\,& f(\lambda(\mathfrak{g}[{u_k}])=\frac{\psi}{n-2}e^{2{u_k}} \mbox{ in } M, 
\,& u_k=\log k \mbox{ on } \partial M.
\end{aligned}
\end{equation}
The Dirichlet problems are solvability in class of smooth admissible
 functions according to Proposition \ref{lemma4-main}.
 
  Let $c=\frac{2n}{n-2}\inf_M \psi$ and  $\tilde{u}$ be the corresponding function obeying \eqref{scalar-equ1}. 
By \eqref{key1-main}, $\frac{n\psi}{n-2} e^{2u_k}\leq \mathrm{tr}_g(\mathfrak{g}[u_k])$.
The maximum principle yields
 \begin{equation}
 \label{uniform-c0}
\begin{aligned}
u_k\leq u_{k+1}\leq \tilde{u} \mbox{ in } M \mbox{ for } k\geq 1. % \nonumber
\end{aligned}
\end{equation}
%which shows that the function
 %Let $u_\infty(x):=\lim_{k\rightarrow+\infty}u_k(x)$ is well defined for all $x\in M$. 
 As a result, for any compact subset $K\subset\subset M$,
  $$|u_k|_{C^0(K)}\leq C_1(K) \mbox{ for all } k, \mbox{ for $C_1(K)$  being independent of $k$}.$$

Given any compact subset $K\subset\subset M$, we choose a compact subset $K_1\subset\subset M$ such that
$K\subset\subset K_1$.
By interior estimates established above %(Theorems \ref{thm1-c2} and \ref{thm1-c1}) 
and 
Evans-Krylov theorem, 
 $$|u_k|_{C^{2,\alpha}(K)}\leq C_2(K,K_1) \mbox{ for all } k, \mbox{ where $C_2(K,K_1)$ depends not on $k$}.$$
 %Combining with diagonal argument
Combining with  Schauder theory, let $k\rightarrow +\infty$, %Indeed, 
\begin{equation}
\label{u_infty}
\begin{aligned}
u_\infty(x)=\lim_{k\rightarrow+\infty}u_k(x) \mbox{ for all } x\in M,
\end{aligned}
\end{equation}
  $u_\infty$ is a smooth admissible solution to \eqref{conformal-equation-degree1} and
  \eqref{infty-boundary}.
 %\eqref{conformal-equ0}.
%\begin{equation}\label{dirichlet-equ-infty}\begin{aligned}
%\,& f(\lambda(\mathfrak{g}[{u_\infty}])=\frac{\psi}{n-2}e^{2{u_\infty}} \mbox{ in } M, \,& u_\infty=+\infty \mbox{ on } \partial M.
%\end{aligned}\end{equation}
Again, maximum principle yields $u_k\geq \log\frac{k\delta^2}{k\mathrm{d}+\delta^2}$ near boundary.
Thus $u_\infty +\log {\mathrm{d}}\geq -C_0$  for some $C_0$
%\begin{equation}\label{key1-asymptotic}
%u_\infty +\log {\mathrm{d}}\geq -C_0 \mbox{ for some } C_0
%\end{equation}
near the boundary. The metric $g_\infty=e^{2u_\infty}g$ is complete.

\end{proof}

\subsection{Asymptotic properties and uniqueness of complete conformal metrics}
\label{oo}

%\begin{remark}
It would be interesting to investigate the asymptotic behavior  when $x\rightarrow\partial M$.

 For conformal scalar curvature equation \eqref{scalar-equ1} (with prescribed scalar curvature $-c$) on smooth bounded domains in Euclidean spaces,
  Loewner-Nirenberg \cite{Loewner1974Nirenberg} 
proved that %near the boundary %$|(\mathrm{d}e^{\tilde{u}})^{\frac{n-2}{2}}-1|\leq Cd$.  
$\mathrm{d}(x)^2e^{2\tilde{u}(x)}\rightarrow \frac{n(n-1)}{c}$ as $x\rightarrow\partial M$;
%$\frac{2}{n-2}\log(1-C\mathrm{d})\leq \tilde{u}+\log \mathrm{d}\leq \frac{2}{n-2}\log(1+C\mathrm{d}).$
%$| \tilde{u}+\log \mathrm{d}|\leq C$
while on curved manifolds,
   in the proof of their Lemma 5.2 in \cite{Gursky-Streets-Warren2011},
Gursky-Streets-Warren constructed local supersolution of %the conformal scalar curvature equation 
\eqref{scalar-equ1}. From their construction one can see  %near the boundary 
$\tilde{u}$ obeys 
\begin{equation}
\label{asymptotic-rate1-002}
\begin{aligned}
\lim_{x\rightarrow\partial M}\mathrm{d}(x)^2e^{2\tilde{u}(x)}\leq \frac{n(n-1)}{c}.
\end{aligned}
\end{equation}
 Please refer to \cite{Loewner1974Nirenberg,Gursky-Streets-Warren2011} for more details. 
 %Similar to the proof of Proposition \ref{proposition-asymptotic} below, one can derive
% $\lim_{x\rightarrow\partial M}\mathrm{d}(x)^2e^{2\tilde{u}(x)}\geq \frac{n(n-1)}{c}.$
%\begin{equation}
%\label{asymptotic-rate1-002}
%\begin{aligned}
%\lim_{x\rightarrow\partial M}\mathrm{d}(x)^2e^{2\tilde{u}(x)}\geq \frac{n(n-1)}{c}.
%\end{aligned}
%\end{equation}
%As a consequence,  following the method of \cite{Loewner1974Nirenberg,Gursky-Streets-Warren2011} one obtains the %uniqueness of complete conformal metric  of constant scalar curvature $-c$.

For the complete conformal metrics satisfying \eqref{conformal-equ0},
 %obtained in  Theorem  \ref{thm0-conformal},
  we further prove the following asymptotic properties.
\begin{proposition}
\label{super123}
Let $\widetilde{g}_\infty=e^{2\widetilde{u}_\infty}g$
be a complete %conformal 
metric %claimed in  
satisfying \eqref{conformal-equ0}, then
%obtained in the statement of Theorem  \ref{thm0-conformal}. Then  
 \begin{equation} 
 \label{asymptotic-rate1}
 \begin{aligned}
\lim_{x\rightarrow\partial M} (\widetilde{u}_\infty(x)+\log \mathrm{d}(x))\leq \frac{1}{2}\log\frac{(n-1)(n-2)}{2 \inf_{\partial M}\psi}. 
 \end{aligned}
\end{equation}
\end{proposition}

\begin{proof}
 The proof uses Theorem \ref{thm-AM} and \eqref{asymptotic-rate1-002}.
Let $\Omega_{\delta,\delta'}=\{x\in M: 0<\delta'<\mathrm{d}(x)<\delta-\delta'\}.$
 If  $0<\delta'<\frac{\delta}{2}\ll1$, $\Omega_\delta$ and $\Omega_{\delta,\delta'}$ are both smooth.
 By Theorem \ref{thm-AM},  $\Omega_{\delta}$ admits a smooth complete conformal metric $g_\infty^{\delta}=e^{2w_\infty^{\delta}}g$ with constant scalar curvature $-1$.
Again, by Theorem \ref{thm-AM},  there is on $\Omega_{\delta,\delta'}$ a smooth complete conformal metric $g_\infty^{\delta,\delta'}=e^{2u_\infty^{\delta,\delta'}}g$ with constant scalar curvature $-1$, that is,
 $\mathrm{tr}_g (\mathfrak{g}[u_\infty^{\delta,\delta'}])=\frac{1}{2}e^{2u_\infty^{\delta,\delta'}},$  ${u_\infty^{\delta,\delta'}}|_{\partial \Omega_{\delta,\delta'}}=+\infty.$
% \begin{equation}  \begin{aligned}
%\,& \mathrm{tr}_g (\mathfrak{g}[u_\infty^{\delta,\delta'}])=\frac{1}{2}e^{2u_\infty^{\delta,\delta'}}, \,& {u_\infty^{\delta,\delta'}}|_{\partial \Omega_{\delta,\delta'}}=+\infty. \nonumber
% \end{aligned}\end{equation}
 For $\widetilde{u}_\infty$ solving \eqref{conformal-equ0}, $\mathrm{tr}_g(\mathfrak{g}[\widetilde{u}_\infty]) \geq {n\psi} 
 e^{2\widetilde{u}_\infty}$ 
according to  \eqref{key1-main}.
The maximum principle implies 
\begin{equation}  \begin{aligned}
\,& u_\infty^{\delta,\delta'} \geq \widetilde{u}_\infty+\frac{1}{2}\log \left(\frac{2n}{n-2}\inf_{\Omega_{\delta}}\psi\right),
\,& u_\infty^{\delta,\delta'}\geq w_\infty^{\delta}
\mbox{ in } \Omega_{\delta,\delta'}. \nonumber
 \end{aligned}\end{equation}
%For any compact subset $K\subset\subset\Omega_\delta$, we have $\delta_0>0$ such that $K\subset\subset\Omega^{\delta,\delta_0}$, and then
  For any $0<\delta'<\delta_0'<\frac{\delta}{2}$, according to the maximum principle, $u_\infty^{\delta,\delta'}\leq u_\infty^{\delta,\delta_0'}$ in $\Omega_{\delta,\delta_0'}$.
%Let $\mathrm{d}^{\delta,\delta'}(x)$ be the distance function from $x\in\Omega_{\delta,\delta'}$ to $\partial\Omega_{\delta,\delta'}$ with respect to $g$. Notice for $0<\delta'<\delta\ll1$, %$\mathrm{d}^{\delta,\delta'}(x)+\delta'=\mathrm{d}(x).$
% $\lim_{\delta'\rightarrow0}\mathrm{d}^{\delta,\delta'}(x)=\mathrm{d}(x).$
Let $u_\infty^{\delta}(x):=\lim_{\delta'\rightarrow0} u_\infty^{\delta,\delta'}(x)$, then %\eqref{f-equ1}. 
%By Theorem \ref{thm-AM},  $\Omega_{\delta}$ admits a smooth complete conformal metric $g_\infty^{\delta}=e^{2u_\infty^{\delta}}g$ with constant scalar curvature $-1$.
% First, we prove % in $\Omega_\delta$, 
  \begin{equation} 
 \label{f-equ1}
 \begin{aligned}
\,& u_\infty^{\delta} \geq w_\infty^{\delta}, \,&  u_\infty^{\delta} \geq  \widetilde{u}_\infty+ \frac{1}{2}\log \left(\frac{2n}{n-2}\inf_{\Omega_{\delta}}\psi \right)  \mbox{ in } \Omega_{\delta}.
  \end{aligned}
\end{equation}
%Here we use the uniqueness of complete conformal metric of scalar curvature of $-1$.
%also the proof of local $C^0$-estimate also uses the method in second proof of Theorem \ref{thm-c0-upper}).
Thus $e^{2u_\infty^\delta}g$ is a smooth complete conformal metric on $\Omega_{\delta}$ of scalar curvature $-1$. (The proof of interior estimates for conformally scalar curvature equation is standard and is the same as before, since
 it corresponds to $\sigma_1$ that is of fully uniform ellipticity).
  Furthermore, \eqref{asymptotic-rate1-002} gives
 $\lim_{x\rightarrow \partial M} (u_\infty^{\delta}(x)+\log\mathrm{d}(x))\leq \frac{1}{2}\log [n(n-1)].$
%$\lim_{x\rightarrow \partial\Omega^{\delta,\delta'}} (u_\infty^{\delta,\delta'}(x)+\log\mathrm{d}^{\delta,\delta'}(x))\leq \frac{1}{2}\log [n(n-1)].$
%For the solution $u_k$ of \eqref{dirichlet-equk}, $\mathrm{tr}_g(\mathfrak{g}[u_k])\geq \frac{n\psi}{n-2}e^{2u_k}$ 
Putting them together,
%$\lim_{x\rightarrow\partial M} (u_\infty(x)+\log \mathrm{d}(x))\leq \frac{1}{2}\log\frac{(n-1)(n-2)}{2( \inf_{\partial M}\psi-\epsilon)}.$
we get \eqref{asymptotic-rate1}. 
\end{proof}

The proof above indeed gives the following proposition:
 \begin{proposition}
\label{proposition-asymptotic-scalar}
%Let $u_\infty$ be the solution obtained in Theorem  \ref{thm0-conformal}.
Suppose the smooth conformal metric 
$\widetilde{g}=e^{2w}g$ has strictly negative scalar curvature $R_{\widetilde{g}}$ in $M$.
Assume $R_{\widetilde{g}}$ can be continuously extended to boundary, still denoted $R_{\widetilde{g}}$.
Then %$\lim_{x\rightarrow\partial M} \mathrm{d}(x)^2 e^{2w(x)}\leq \frac{n(n-1)}{ \inf_{\partial M}(-R_{\widetilde{g}}) }.$
$\lim_{x\rightarrow\partial M} (w(x)+\log \mathrm{d}(x))\leq \frac{1}{2}\log\frac{n(n-1)}{ \inf_{\partial M}(-R_{\widetilde{g}}) }.$

\end{proposition}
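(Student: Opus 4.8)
The idea is to obtain \ref{proposition-asymptotic-scalar} as a variant of the asymptotic estimate already proved for the metrics of Theorem \ref{thm0-conformal}, using Aviles--McOwen's construction on a thin collar of $\partial M$ together with the Loewner--Nirenberg asymptotic \eqref{asymptotic-rate1-002}. Write $c_0:=\inf_{\partial M}(-R_{\widetilde{g}})$. One may assume $c_0>0$, since otherwise the claimed bound is $+\infty$; likewise, if $w$ stays bounded above near $\partial M$ then $\mathrm{d}(x)^2e^{2w(x)}\to0$ and there is nothing to prove, so only the case $\limsup_{x\to\partial M}w=+\infty$ is of interest.

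First I would fix $\epsilon\in(0,c_0)$ and use the continuity of $R_{\widetilde{g}}$ on $\bar M$ to choose $\delta>0$ so small that the collar $\Omega_\delta=\{0<\mathrm{d}<\delta\}$ is smooth and $-R_{\widetilde{g}}\geq c_\epsilon:=c_0-\epsilon>0$ on $\overline{\Omega_\delta}$. Taking the $g$-trace of \eqref{conformal-Einstein-tensor} (equivalently, the conformal change law for the scalar curvature) gives $\mathrm{tr}_g(\mathfrak{g}[w])=\frac{-R_{\widetilde{g}}}{2}e^{2w}$, hence
\[
\mathrm{tr}_g(\mathfrak{g}[w])=\frac{-R_{\widetilde{g}}}{2}e^{2w}\geq\frac{c_\epsilon}{2}e^{2w}\qquad\text{in }\Omega_\delta,
\]
so $w$ is a subsolution in $\Omega_\delta$ of the Loewner--Nirenberg equation with constant scalar curvature $-c_\epsilon$. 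Next, Theorem \ref{thm-AM} (Aviles--McOwen) applied to the compact manifold with boundary $\overline{\Omega_\delta}$, whose boundary $\partial\Omega_\delta=\partial M\cup\{\mathrm{d}=\delta\}$ is smooth, furnishes a smooth complete conformal metric $g^\delta_\infty=e^{2u^\delta_\infty}g$ on $\Omega_\delta$ with $\mathrm{tr}_g(\mathfrak{g}[u^\delta_\infty])=\frac{c_\epsilon}{2}e^{2u^\delta_\infty}$ and $u^\delta_\infty=+\infty$ on $\partial\Omega_\delta$; since near $\partial M$ the distance to $\partial\Omega_\delta$ coincides with $\mathrm{d}$ and the outer component $\{\mathrm{d}=\delta\}$ is irrelevant there, \eqref{asymptotic-rate1-002} yields $\lim_{x\to\partial M}\mathrm{d}(x)^2e^{2u^\delta_\infty(x)}\leq\frac{n(n-1)}{c_\epsilon}$.

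The decisive step is the comparison $w\leq u^\delta_\infty$ in $\Omega_\delta$. On $\{\mathrm{d}=\delta\}$ this is immediate because $u^\delta_\infty=+\infty$ while $w$ is bounded on that compact set; and since $v\mapsto\mathrm{tr}_g(\mathfrak{g}[v])-\frac{c_\epsilon}{2}e^{2v}$ is quasilinear elliptic with zeroth-order part strictly decreasing in $v$, no interior maximum of $w-u^\delta_\infty-\eta$ (for $\eta>0$, making $u^\delta_\infty+\eta$ a strict supersolution) can be positive. The genuinely delicate point, which I expect to be the main obstacle, is that $w$ may itself blow up along $\partial M$, so the comparison there is between two singular functions. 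This is dealt with exactly as in Loewner--Nirenberg theory \cite{Loewner1974Nirenberg}: the Keller--Osserman type a priori bound, applied to the subsolution $w$ (after the substitution $\phi=e^{\frac{n-2}{2}w}$, which turns the differential inequality into $\Delta\phi\geq c\,\phi^{\frac{n+2}{n-2}}$), gives $w\leq-\log\mathrm{d}+C$ near $\partial M$ with $C$ depending only on the equation and the geometry of the collar; thus $w$ lies below a local supersolution near $\partial M$, and the maximality of the infinite-boundary-data solution forces $w\leq u^\delta_\infty$ throughout $\Omega_\delta$.

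Granting this, $\mathrm{d}(x)^2e^{2w(x)}\leq\mathrm{d}(x)^2e^{2u^\delta_\infty(x)}$ near $\partial M$, so $\limsup_{x\to\partial M}\mathrm{d}(x)^2e^{2w(x)}\leq\frac{n(n-1)}{c_0-\epsilon}$; letting $\epsilon\to0^+$ gives the assertion. Apart from the singular comparison at $\partial M$, every step is a direct transcription of the argument already carried out for the metrics of Theorem \ref{thm0-conformal}.
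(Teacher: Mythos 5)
Your proof takes the same route as the paper's: take the $g$-trace of \eqref{conformal-Einstein-tensor} to exhibit $w$ as a subsolution of the prescribed scalar-curvature equation on a thin collar $\Omega_\delta$, compare with the Aviles--McOwen solution $u^\delta_\infty$ on that collar, and invoke the Loewner--Nirenberg asymptotic \eqref{asymptotic-rate1-002}, then let $\epsilon\to0^+$. You also correctly flag the one point that the paper's phrase ``the proof above indeed gives'' leaves implicit: the earlier proof compares $u^\delta_\infty$ against the finite-boundary approximants $u_k$, and that device is not available for a general $w$ which may itself blow up along $\partial M$. However, your proposed resolution is not quite self-contained. The Keller--Osserman estimate $w\le-\log\mathrm{d}+C$, together with the two-sided asymptotic control on $u^\delta_\infty$, gives only $w-u^\delta_\infty\le O(1)$ near $\partial M$, not $\le0$, so the comparison $w\le u^\delta_\infty$ does not follow from Keller--Osserman alone; you are in fact leaning on the maximality of the big solution as a black box, and the proof of that maximality does not go through Keller--Osserman. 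A cleaner route, which avoids Keller--Osserman entirely and is presumably what the paper intends, is an exhaustion argument: since $w\in C^\infty(M)$, it is automatically bounded on each compact subannulus $\{\eta\le\mathrm{d}\le\delta-\eta\}$; compare $w$ with the Aviles--McOwen solution $u^{\delta,\eta}_\infty$ on the shrunk collar $\{\eta<\mathrm{d}<\delta-\eta\}$, where the boundary comparison is trivial because $w$ is finite up to the boundary while $u^{\delta,\eta}_\infty=+\infty$ there, and then let $\eta\downarrow0$, using that $u^{\delta,\eta}_\infty$ decreases to $u^\delta_\infty$ by uniqueness of the big solution. This yields $w\le u^\delta_\infty$ on $\Omega_\delta$ directly and the rest of your proof then goes through unchanged.
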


%By constructing local subsolutions, we prove
On the other hand, we can prove
 \begin{proposition}
\label{proposition-asymptotic}
 
Any admissible complete metric $\widetilde{g}_\infty=e^{2\widetilde{u}_\infty}g$  satisfying \eqref{conformal-equ0}  
obeys
\begin{equation}
\label{asymptotic-rate2}
\begin{aligned}
\lim_{x\rightarrow\partial M} (\widetilde{u}_\infty(x)+\log \mathrm{d}(x))\geq \frac{1}{2}\log\frac{(n-1)(n-2)}{2 \sup_{\partial M}\psi}. %\nonumber
\end{aligned}
\end{equation}
In addition if %$\inf_{\partial M}\psi=\sup_{\partial M}\psi=\Lambda$, 
$\psi|_{\partial M}\equiv\Lambda$, i.e. $\psi$ is constant when restricted to boundary, 
then
\begin{equation}
\label{asymptotic-rate3}
\begin{aligned}
\lim_{x\rightarrow\partial M} (\widetilde{u}_\infty(x)+\log \mathrm{d}(x))=\frac{1}{2}\log\frac{(n-1)(n-2)}{2 \Lambda}
\end{aligned}
\end{equation} 
and the admissible  complete conformal metric satisfying \eqref{conformal-equ0} is unique.
\end{proposition}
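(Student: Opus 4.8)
The plan is to prove Proposition \ref{proposition-asymptotic} in two stages: first establish the lower bound \eqref{asymptotic-rate2} via a local subsolution near $\partial M$, then combine it with \eqref{asymptotic-rate1} to obtain the equality \eqref{asymptotic-rate3} when $\psi$ is constant on the boundary, and finally deduce uniqueness from a comparison/monotonicity argument.

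\medskip

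\textbf{Step 1: local subsolution and the lower bound.} Fix $\epsilon>0$ small. By continuity of $\psi$, there is $\delta>0$ so that $\Omega_\delta$ is smooth (i.e.\ $\mathrm{d}$ is smooth there and $|\nabla\mathrm{d}|\geq \tfrac1{\sqrt2}$) and $\psi\leq \sup_{\partial M}\psi+\epsilon$ in $\Omega_\delta$. I would take a radial ansatz $\underline{h}=\underline{h}(\mathrm{d})$ of the same logarithmic shape used for $h_k$, but now tuned so that $e^{2\underline h}\mathrm{d}^2$ approaches a slightly \emph{smaller} value than the target $\tfrac{(n-1)(n-2)}{2(\sup_{\partial M}\psi+\epsilon)}$, and verify using Lemma \ref{lemma1-main} and \eqref{key1-main} that $\underline h$ (after adding $\varphi$ or whatever fixed bounded function matches the boundary data of the approximating solutions $u_k$) is a subsolution of \eqref{dirichlet-equk} in $\Omega_\delta$: since $nf(\lambda)\leq \sum\lambda_i$, it suffices to make the trace $\mathrm{tr}_g(\mathfrak g[\underline h])$ large enough, which is exactly the Loewner--Nirenberg--type scalar computation already carried out for the supersolution in Proposition \ref{proposition-asymptotic-scalar}, run in the reverse direction. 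On $\{\mathrm{d}=\delta\}$ we have $u_k$ uniformly bounded below (interior $C^0$ bound \eqref{uniform-c0}, or just $u_k\geq u_1$), so choosing $\delta$ small we arrange $\underline h\leq u_k$ on $\{\mathrm{d}=\delta\}$, while $\underline h\leq u_k$ on $\partial M$ (both are $\log k\to\infty$, or we simply let $\underline h\to+\infty$ there too). The maximum principle then gives $u_k\geq \underline h$ in $\Omega_\delta$ for every $k$, hence $u_\infty\geq\underline h$, which yields $\liminf_{x\to\partial M}(u_\infty+\log\mathrm{d})\geq \tfrac12\log\tfrac{(n-1)(n-2)}{2(\sup_{\partial M}\psi+\epsilon)}$; letting $\epsilon\to0$ gives \eqref{asymptotic-rate2}.

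\medskip

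\textbf{Step 2: equality and uniqueness.} When $\inf_{\partial M}\psi=\sup_{\partial M}\psi=:c_0$, inequalities \eqref{asymptotic-rate1} and \eqref{asymptotic-rate2} coincide, forcing $\lim_{x\to\partial M}(u_\infty+\log\mathrm{d})=\tfrac12\log\tfrac{(n-1)(n-2)}{2c_0}$, which is \eqref{asymptotic-rate3}. For uniqueness, suppose $g^{(1)}=e^{2u^{(1)}}g$ and $g^{(2)}=e^{2u^{(2)}}g$ are two admissible complete conformal metrics solving \eqref{conformal-equ0} with $\lambda(G)\in\Gamma$ in $M$. Both satisfy the same boundary asymptotics by the argument just given (the asymptotic rate depends only on $\psi|_{\partial M}$, not on the interior solution, since Steps 1 and the proof of \eqref{asymptotic-rate1} used only barriers). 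Hence for any $\epsilon>0$ we have $u^{(1)}\leq u^{(2)}+\epsilon$ in a neighborhood of $\partial M$; then on the remaining compact part $u^{(1)}-u^{(2)}$ attains its maximum at an interior point, and at that point the concavity and ellipticity of $F$ together with $F(G_{g^{(1)}})=F(G_{g^{(2)}})=\psi$ force, via the structure of \eqref{conformal-equation-degree1} and the strict monotonicity in the zeroth-order term $e^{2u}$, that $u^{(1)}\leq u^{(2)}$ there; symmetrically $u^{(2)}\leq u^{(1)}$, so $u^{(1)}=u^{(2)}$.

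\medskip

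\textbf{Main obstacle.} The delicate point is Step 1: constructing the subsolution $\underline h$ whose asymptotic constant can be pushed arbitrarily close to the sharp value $\tfrac{(n-1)(n-2)}{2\sup_{\partial M}\psi}$ while genuinely remaining a subsolution of the \emph{fully nonlinear} equation, not merely of the scalar (trace) equation. The inequality $nf(\lambda)\leq\sum\lambda_i$ lets one reduce to the trace equation \emph{for subsolutions only}, which is why the lower bound comes out with $\sup_{\partial M}\psi$ rather than $\inf$, mirroring how \eqref{asymptotic-rate1} comes out with $\inf$; handling the curvature error terms $\underline h'(\Delta\mathrm{d}\,g-\nabla^2\mathrm{d})$ and the cross terms with $\varphi$ uniformly as $\delta\to0$, so that they are absorbed into the dominant $\underline h'^2$ term, is the computational heart of the matter, but it follows the template already established for $h_k$ in Lemma \ref{lemma-k5} and for the supersolution in the proof of \eqref{asymptotic-rate1}.
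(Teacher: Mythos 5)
Your overall strategy (construct a local subsolution of the full equation near $\partial M$, compare with the approximating solutions $u_k$ by the maximum principle, then combine with \eqref{asymptotic-rate1}) is the same as the paper's. But your Step~1 contains a genuine conceptual error in the mechanism you propose for verifying the subsolution property. The inequality \eqref{key1-main}, $nf(\lambda)\leq\sum_i\lambda_i$, bounds $f$ \emph{above} by the normalized trace. It therefore converts a trace-\emph{supersolution} into an $f$-\emph{supersolution} (which is exactly how the paper derives the upper bound \eqref{asymptotic-rate1}), but it does \emph{not} convert a trace-subsolution into an $f$-subsolution. Making $\mathrm{tr}_g(\mathfrak g[\underline h])$ large, as you propose, only gives you an upper bound on $f(\lambda(\mathfrak g[\underline h]))$, which is useless for the lower bound \eqref{asymptotic-rate2}. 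Your claim that ``the inequality $nf(\lambda)\leq\sum\lambda_i$ lets one reduce to the trace equation for subsolutions only'' is exactly backwards.

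The device the paper actually uses to verify the subsolution property is different and does not pass through the trace at all. It first shows $U[h_{k,\epsilon,\delta}]\geq c(\mathrm{d})\,g$ (in the sense of bilinear forms, after absorbing the error terms $h'(\Delta\mathrm d\,g-\nabla^2\mathrm d)$ and the $(h'^2-h'')\,d\mathrm d\otimes d\mathrm d$ contribution, which requires the extra ``$\frac{1}{\mathrm d+\delta}-\frac{1}{\delta}$'' correction term in the ansatz \eqref{1115-0}, which your sketch omits). Then it uses monotonicity (Lemma~\ref{lemma3.4}/CNS3) to get $f(\lambda(\mathfrak g[h]))\geq F(U[h])\geq F(c\,g)$, and finally the normalization $f(\vec{\bf 1})=1$ together with degree-one homogeneity gives $F(c\,g)=c$ directly. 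It is only through this normalization, not the trace inequality, that the fully nonlinear operator becomes computable on the barrier. You would need to replace your appeal to \eqref{key1-main} by this monotonicity-plus-homogeneity argument for Step~1 to be correct. Step~2 (deducing equality and sketching uniqueness via comparison after matching the boundary asymptotics) is essentially what the paper does.
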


\begin{proof}
%We use \eqref{key1-main} and $|\nabla \mathrm{d}|_{\partial  M}=1$.
The proof is different from that used by \cite{Loewner1974Nirenberg,Gursky-Streets-Warren2011}.
 The key ingredients in this proof are \eqref{key1-main} and $|\nabla \mathrm{d}|_{\partial  M}=1$. 
Fix a constant $0<\epsilon<1$. %Fix $\epsilon$.
 We consider 
\begin{equation}
\label{001}
\left\{
\begin{aligned}
\,& f(\lambda(\mathfrak{g}[u_{k,\epsilon,\delta}]))=\frac{\psi}{n-2} e^{2u_{k,\epsilon,\delta}} \mbox{ in } M, \\
\,& u_{k,\epsilon,\delta}=\log {k} +\frac{1}{2}\log\frac{(1-\epsilon)^2(n-1)(n-2)}{2(\sup_{\partial M}\psi+\epsilon)}  \mbox{ on } \partial M,
\end{aligned}
\right.
\end{equation}
where $\delta$ is a positive constant to be determined later.
The solvability is obtained by Proposition \ref{lemma4-main}. 
The maximum principle implies
\begin{equation}
\label{b4b}
\begin{aligned}
\widetilde{u}_\infty\geq u_{k,\epsilon,\delta} \mbox{ in } M,
\end{aligned}
\end{equation}
\begin{equation}
\label{1114-3}
\begin{aligned}
\inf_M u_{k,\epsilon,\delta} \geq \min\left\{\frac{1}{2}\inf_M \log\frac{f(\lambda(G_g))}{\psi},  \frac{1}{2}\log\frac{k^2(1-\epsilon)^2(n-1)(n-2)}{2(\sup_{\partial M}\psi+\epsilon)} \right\}.
\end{aligned}
\end{equation}
Obviously, if $k\gg1$  then 
$\inf_M u_{k,\epsilon,\delta} \geq \frac{1}{2}\inf_M \log\frac{f(\lambda(G_g))}{\psi}.$
%\begin{equation}\begin{aligned}
%  \min\left\{\frac{1}{2}\inf_M \log\frac{f(\lambda(G_g))}{\psi}, 
% \frac{1}{2}\log\frac{k^2(1-\epsilon)^2(n-1)(n-2)}{2(\sup_{\partial M}\psi+\epsilon)} \right\} =  \frac{1}{2}\inf_M \log\frac{f(\lambda(G_g))}{\psi}. \nonumber
%\end{aligned}\end{equation}

The local subsolution is given by
 \begin{equation}
\label{1115-0}
\begin{aligned}
h_{k,\epsilon,\delta}(\mathrm{d})=\log\frac{k}{k\mathrm{d}+1}
+\frac{1}{2}\log\frac{(1-\epsilon)^2(n-1)(n-2)}{2(\sup_{\partial M}\psi+\epsilon)}+\frac{1}{\mathrm{d}+\delta}-\frac{1}{\delta}. \nonumber
\end{aligned}
\end{equation}
The $``\frac{1}{\mathrm{d}+\delta}-\frac{1}{\delta}"$ part %in \eqref{1115-0}
 is inspired by \cite{Gursky-Streets-Warren2011}.
Lemma \ref{lemma3.4} %couple with  Lemma 6.2 in \cite{CNS3}, 
yields $$F^{ij}(\mathfrak{g}[h_{k,\epsilon,\delta}])(G_g)_{ij}>0.$$ Together with the concavity of $f$, it further yields
\begin{equation}
\begin{aligned}
f(\lambda(\mathfrak{g}[h_{k,\epsilon,\delta}]))%=F(\frac{G_g}{n-2}+U[h_{k,\epsilon,\delta}])
=F(\mathfrak{g}[h_{k,\epsilon,\delta}])\geq F(U[h_{k,\epsilon,\delta}])
=f(\lambda(U[h_{k,\epsilon,\delta}])).  \nonumber
\end{aligned}
\end{equation}

Straightforward computation tells
\begin{equation}
\label{1114-1}
\begin{aligned}
\,& h_{k,\epsilon,\delta}'=-\frac{k}{k\mathrm{d}+1}-\frac{1}{(\mathrm{d}+\delta)^2},
 \,&h_{k,\epsilon,\delta}''=\frac{k^2}{(k\mathrm{d}+1)^2}+\frac{2}{(\mathrm{d}+\delta)^3}, \nonumber
 \end{aligned}
\end{equation}
\begin{equation}
\begin{aligned}
 h_{k,\epsilon,\delta}'^2
 =\frac{k^2}{(k\mathrm{d}+1)^2}+\frac{1}{(\mathrm{d}+\delta)^4}+\frac{2k}{(k\mathrm{d}+1)(\mathrm{d}+\delta)^2}, \nonumber
\end{aligned}
\end{equation}
$$ h_{k,\epsilon,\delta}''+\frac{n-3}{2} h_{k,\epsilon,\delta}'^2
\geq \frac{n-1}{2}\frac{k^2}{(k\mathrm{d}+1)^2}+\frac{2}{(d+\delta)^3}.$$
It is easy to see $ h_{k,\epsilon,\delta}'^2- h_{k,\epsilon,\delta}''\geq0$ provided $\mathrm{d}+\delta\leq\frac{1}{2}$ or $k\geq\frac{1}{\delta}$.
 Together with Lemma \ref{lemma1-main}, one has
\begin{equation}
\begin{aligned}
U[h_{k,\epsilon,\delta}]
%= \,& \left(h_{k,\epsilon,\delta}''+\frac{n-3}{2}h_{k,\epsilon,\delta}'^2\right)|\nabla \mathrm{d}|^2 g+h'(\Delta \mathrm{d} g -\nabla^2\mathrm{d}) \\\,& +(h_{k,\epsilon,\delta}'^2-h_{k,\epsilon,\delta}'')d\mathrm{d}\otimes d\mathrm{d} \\  
\geq\,&
\left(\frac{n-1}{2}\frac{k^2}{(k\mathrm{d}+1)^2}+\frac{2}{(d+\delta)^3} \right)|\nabla \mathrm{d}|^2 g
\\\,&
- \left(\frac{k}{k\mathrm{d}+1}+\frac{1}{(\mathrm{d}+\delta)^2}\right)(\Delta \mathrm{d} g -\nabla^2\mathrm{d}).  \nonumber
\end{aligned}
\end{equation}

Note that $|\nabla \mathrm{d}|^2=1$ on ${\partial M}$, 
$\Delta\mathrm{d}g-\nabla^2\mathrm{d}$ is bounded in $\bar M$,
 $\frac{k}{k\mathrm{d}+1}=\frac{1}{\mathrm{d}+\frac{1}{k}}\geq \frac{1}{\mathrm{d}+\delta}$ for $k\geq\frac{1}{\delta}$, 
and $\frac{1}{\mathrm{d}+\delta}\gg1$ in $\Omega_\delta$ 
if $0<\delta\ll1$.
For the $\epsilon$ fixed, there is a $\delta_\epsilon$ depending on $\epsilon$ and other known data %but not on $k$
 such that for 
$0<\delta<\delta_{\epsilon}$,
\begin{enumerate}
\item $\mathrm{d}$ is smooth, and $|\nabla \mathrm{d}|^2\geq 1-\epsilon$.
\item $\frac{2}{(d+\delta)^3} |\nabla \mathrm{d}|^2 g
-  \frac{1}{(\mathrm{d}+\delta)^2} (\Delta \mathrm{d} g -\nabla^2\mathrm{d})\geq0$.
\item $\frac{\epsilon(n-1)}{2}\frac{k^2}{(k\mathrm{d}+1)^2} |\nabla \mathrm{d}|^2 g-\frac{k}{k\mathrm{d}+1} (\Delta \mathrm{d} g -\nabla^2\mathrm{d})\geq0$.
%\item $U[h_{k,\epsilon,\delta}]\geq \frac{(n-1)(1-\epsilon)}{2} \frac{k^2}{(k\mathrm{d}+1)^2} |\nabla \mathrm{d}|^2 g$.
\item $\sup_{\partial M}\psi+\epsilon\geq \sup_{\Omega_{\delta}}{\psi}$,
%\end{enumerate}
%And on $\partial \Omega_\delta$
%\begin{enumerate}
\end{enumerate}
 in $\Omega_\delta$.
From now on we fix $0<\delta<\delta_\epsilon$ (for example $\delta=\frac{\delta_\epsilon}{2}$) and $k\geq\frac{1}{\delta}$.
Putting the discussion above together we have on $\Omega_\delta$
\begin{equation}
\label{buchong1}
\begin{aligned}
 f(\lambda(\mathfrak{g}[h_{k,\epsilon,\delta}]))\geq\,& F(U[h_{k,\epsilon,\delta}])\\
 \geq\,&
 F\left( \frac{(n-1)(1-\epsilon)}{2}\cdot\frac{k^2}{(k\mathrm{d}+1)^2}|\nabla \mathrm{d}|^2 g\right) 
% \\\geq\,& \frac{(n-1)(1-\epsilon)}{2}
%  \cdot \frac{2(\sup_{\partial M}\psi+\epsilon)}{(1-\epsilon)^2(n-1)(n-2)}e^{2h_{k,\epsilon,\delta}(\mathrm{d})}  \cdot (1-\epsilon) 
 \\\geq\,&  \frac{\sup_{\partial M}\psi+\epsilon}{n-2}e^{2h_{k,\epsilon,\delta}(\mathrm{d})}   \\
  \geq\,& \frac{\sup_{\Omega_{\delta}}\psi}{n-2} e^{2h_{k,\epsilon,\delta}(\mathrm{d})} \\
  \geq\,&  \frac{\psi}{n-2} e^{2h_{k,\epsilon,\delta}}.  % \nonumber
\end{aligned}
\end{equation}
Here we also use $\frac{k^2}{(k\mathrm{d}+1)^2} \geq\frac{2(\sup_{\partial M}\psi+\epsilon)}{(1-\epsilon)^2(n-1)(n-2)}e^{2h_{k,\epsilon,\delta}}.$
%\begin{equation}\label{1115-1}\begin{aligned}
%\frac{k^2}{(k\mathrm{d}+1)^2} \geq\frac{2(\sup_{\partial M}\psi+\epsilon)}{(1-\epsilon)^2(n-1)(n-2)}e^{2h_{k,\epsilon,\delta}}.\nonumber
%\end{aligned}\end{equation}
Note that on $\partial \Omega_\delta$, $u_{k,\epsilon,\delta}=h_{k,\epsilon,\delta}$ on $\partial M$, and
$u_{k,\epsilon,\delta}|_{\mathrm{d}=\delta} \geq h_{k,\epsilon,\delta}(\delta)$ (according to \eqref{1114-3}).
Here we also use $\frac{\log\frac{k}{k\delta+1}}{\frac{1}{2\delta}}\leq\frac{\log\frac{1}{\delta}}{\frac{1}{2\delta}}\rightarrow 0$ as $\delta\rightarrow0^+$.
  Then the maximum principle yields on $\Omega_{\delta}$ 
\begin{equation}
\begin{aligned}
u_{k,\epsilon,\delta}\geq h_{k,\epsilon,\delta}=\log\frac{k}{k\mathrm{d}+\delta}+\frac{1}{2}\log\frac{(1-\epsilon)^2(n-1)(n-2)}{2((\sup_{\partial M}\psi+\epsilon)}+\frac{1}{\mathrm{d}+\delta}-\frac{1}{\delta}.  \nonumber
\end{aligned}
\end{equation}
From \eqref{b4b}, let $k\rightarrow +\infty$, %$u_\infty\geq u_{k,\epsilon,\delta}$ for any $k>\frac{1}{\delta}$.
we know that on $\Omega_\delta$,
\begin{equation}
\begin{aligned}
  \widetilde{u}_\infty(x)+\log \mathrm{d}(x)\geq \frac{1}{2}\log\frac{(n-1)(n-2)(1-\epsilon)^2}{2(\sup_{\partial M}\psi+\epsilon)}
  +\frac{1}{\mathrm{d}+\delta}-\frac{1}{\delta}.  \nonumber
\end{aligned}
\end{equation}
Thus \eqref{asymptotic-rate2} follows.
%\begin{equation}\begin{aligned}
%\lim_{x\rightarrow\partial M}  (u_\infty(x)+\log \mathrm{d}(x))\geq \frac{1}{2}\log\frac{(n-1)(n-2)}{2\sup_{\partial M}\psi}  \nonumber
%\end{aligned}\end{equation}
The uniqueness of complete conformal metric in case of $\inf_{\partial M}\psi=\sup_{\partial M}\psi$ is a consequence of Proposition \ref{unique-prop} below.

%Finally, if $\inf_{\partial M}\psi=\sup_{\partial M}\psi$, \eqref{asymptotic-rate3} follows from \eqref{asymptotic-rate1} and  \eqref{asymptotic-rate2}, which yields the uniqueness of complete conformal metric. More precisely, let $e^{2u_\infty}g$ and $e^{2w_\infty}g$ be two complete conformal metrics satisfying \eqref{conformal-equ0}.  We first prove $u_\infty\geq w_\infty$. Assume $w_\infty>u_\infty$ at some point.  Since the asymptotic property \eqref{asymptotic-rate3} implies $\lim_{x\rightarrow\partial M}(u_\infty(x)-w_\infty(x))=0$, we know there is $x_0\in M$ at which $(w_\infty-u_\infty)(x_0)=\sup_M(w_\infty-u_\infty)>0$, while the maximum principle yields $w_\infty(x_0)\leq u_\infty(x_0)$.  This is a contradiction. So $u_\infty\geq w_\infty$ in $M$. Conversely, $w_\infty\geq u_\infty$ in $M$. Hence, $u_\infty\equiv w_\infty$.

\end{proof}

\begin{proposition}
\label{unique-prop}
Let $u_\infty$ be as in \eqref{u_infty} and $w_\infty$ be any admissible solution to  \eqref{conformal-equ0}, then 
$$u_\infty\leq w_\infty \leq u_\infty +\frac{1}{2}(\sup_{\partial M}\log\psi- \inf_{\partial M}\log\psi) \mbox{ in } M.$$

\end{proposition}

\begin{proof}
For $u_k$ solving \eqref{dirichlet-equk}, the maximum principle yields $w_\infty\geq u_k$ $(\forall k\geq1)$. Then $w_\infty\geq u_\infty$.

Next, 
we prove the second inequality.
%$w_\infty-u_\infty\leq\frac{1}{2}\sup_{\partial M}\log\psi-\frac{1}{2}\inf_{\partial M}\log\psi.$
   Note that
the asymptotic properties 
\eqref{asymptotic-rate1} and  \eqref{asymptotic-rate2} imply $\lim_{x\rightarrow\partial M}(u_\infty(x)-w_\infty(x))=\frac{1}{2}(\sup_{\partial M}\log\psi-\inf_{\partial M}\log\psi)$. If $w_\infty>u_\infty+\frac{1}{2}(\sup_{\partial M}\log\psi-\inf_{\partial M}\log\psi)$ at some point, then there is $x_0\in M$ at which $(w_\infty-u_\infty)(x_0)=\sup_M(w_\infty-u_\infty)>\frac{1}{2}(\sup_{\partial M}\log\psi-\inf_{\partial M}\log\psi)$, which is a contradiction since the maximum principle yields $w_\infty(x_0)\leq u_\infty(x_0)$.  
\end{proof}

%\begin{proof}
%The proof is similar to proof of Proposition \ref{proposition-asymptotic}. On the annulus $$A_{\delta}:=\left\{x\in M: \frac{\delta}{2}<\mathrm{d}(x)<\delta\right\}$$ we construct the supersolution
 % \begin{equation} \begin{aligned}
%h_{k,\epsilon,\delta}(\mathrm{d})=\log\frac{k}{k\mathrm{d}+1}+\frac{1}{2}\log\frac{(1+\epsilon)^2(n-1)(n-2)}{2(\inf_{\partial M}(-R_{\widetilde{g}})-\epsilon)}.
%\end{aligned}\end{equation}
%The proof is much more simple. We omit the details here.
%\end{proof}

   \subsection{Remark on the existence of \textit{admissible conformal} metrics}

 Let $[g]=\left\{e^{2v}g: v\in C^\infty(\bar M)\right\}$, and $\mathcal{C}(u)=\left\{x\in \bar M: du(x)=0\right\}$ denote the critical set of $u$.
% be the conformal class of $g$. 
We define %a conformal invariant
\begin{equation}
\label{A(M)-def}
\begin{aligned}
\mathcal{A}(\bar M,[g],\Gamma)=\left\{u\in C^2(\bar M):   \lambda_{\tilde g}(\Delta_{\tilde g} u \tilde{g}-\tilde{\nabla}^2 u) \in \Gamma  \mbox{ in  } %\tilde{\nabla} u =0
\mathcal{C}(u),  \mbox{ for some } \tilde{g}\in [g] \right\}.
\end{aligned}
\end{equation}
We can check that $\mathcal{A}(\bar M,[g],\Gamma)$ is independent of the choice of $\tilde{g}$ in %the conformal class 
 $[g]$.
So $\mathcal{A}(\bar M,[g],\Gamma)$ is \textit{conformally invariant}.
%\vspace{1mm}
%In this subsection we
%In the following proposition, we show that  \eqref{admissible-metric1} 
%in Theorems \ref{thm0-conformal} and \ref{thm2-conformal} 
%automatically holds if $\mathcal{A}(\bar M,[g],\Gamma)\neq\emptyset.$

\begin{proposition}
\label{lemma5-main}
Let $\mathcal{A}(\bar M,[g],\Gamma)$ be as above.
%For $n\geq 3$.
If $(M,g)$ supposes a $C^2$-function in $\mathcal{A}(\bar M,[g],\Gamma)$, then there exists an \textit{admissible conformal} metric %satisfying \eqref{admissible-metric1} 
on $\bar M$.
\end{proposition}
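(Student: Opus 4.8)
The plan is to use the conformal transformation rule \eqref{conformal-Einstein-tensor} to reduce the statement to producing a function $\underline u\in C^2(\bar M)$ with
\[
\lambda_g\!\Big(\tfrac1{n-2}G_g+\Delta\underline u\,g-\nabla^2\underline u+\tfrac{n-3}{2}|\nabla\underline u|^2 g+d\underline u\otimes d\underline u\Big)\in\Gamma\quad\text{throughout }\bar M ,
\]
and then to obtain such a $\underline u$ by rescaling a given $u\in\mathcal A(\bar M,[g],\Gamma)$. Two preliminary observations drive the argument. First, the critical set $Z:=\{x\in\bar M:\nabla u(x)=0\}$ is closed and conformally invariant (if $\tilde g=e^{2v}g$ then $\tilde\nabla u=e^{-2v}\nabla u$), and at a point of $Z$ one has $\tilde\nabla^2 u=\nabla^2 u$ and $\Delta_{\tilde g}u\,\tilde g=\Delta u\,g$; hence $u\in\mathcal A$ says precisely that $\lambda_g(\Delta u\,g-\nabla^2 u)\in\Gamma$ along $Z$, and since $\Gamma$ is open and $\lambda_g(\Delta u\,g-\nabla^2 u)$ is continuous this persists on an open neighbourhood $\mathcal U$ of $Z$. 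Second, since $\Gamma$ is convex with $\Gamma_n\subseteq\Gamma$ it is monotone, so $\lambda_g(T)\in\Gamma$ and $S\succeq 0$ imply $\lambda_g(T+S)\in\Gamma$.

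Next, take the ansatz $\underline u=Nu$ with $N\gg1$ to be fixed, and set $B_u:=\tfrac{n-3}{2}|\nabla u|^2 g+du\otimes du\succeq 0$, so that \eqref{conformal-Einstein-tensor} gives
\[
\tfrac1{n-2}G_{e^{2Nu}g}=\tfrac1{n-2}G_g+N(\Delta u\,g-\nabla^2 u)+N^2B_u .
\]
Choose a compact $K$ with $Z\subset\operatorname{int}K\subset K\subset\mathcal U$. On $K$ the eigenvalues $\lambda_g(\Delta u\,g-\nabla^2 u)$ stay at a uniform distance $\ge c_0>0$ from $\partial\Gamma$, so for $N$ large the eigenvalues of $\tfrac1{N(n-2)}G_g+\Delta u\,g-\nabla^2 u$ still lie in $\Gamma$; multiplying by $N$ and adding $N^2B_u\succeq 0$ keeps us in $\Gamma$ by monotonicity. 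On the compact set $\bar M\setminus\operatorname{int}K$ one has $|\nabla u|\ge c_1>0$; when $n\ge 4$ this forces $B_u\succeq\tfrac{n-3}{2}c_1^2 g\succ 0$, so $N^2B_u$ overwhelms the $O(N)$ and $O(1)$ contributions and every eigenvalue tends to $+\infty$, landing in $\Gamma_n\subseteq\Gamma$. Fixing $N$ large enough for both regions, $\underline g=e^{2Nu}g$ satisfies \eqref{admissible-metric1}, which is what we want.

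The main obstacle is the region away from $Z$ in low dimension, where $B_u=du\otimes du$ is only rank one and leaves the directions transverse to $\nabla u$ uncontrolled by $\tfrac{n-3}{2}|\nabla u|^2$, so a leading-order term degenerates on $\partial\Gamma$ and the $O(N)$ perturbation from $\Delta u\,g-\nabla^2 u$ can push eigenvalues out of $\Gamma$. Here I would exploit that $\mathcal A$ constrains $u$ only along $Z$, so $u$ may be altered freely off a neighbourhood of $Z$; concretely one replaces $Nu$ by $Nu+\varepsilon q$ with $q$ a fixed function whose associated tensor $\Delta q\,g-\nabla^2 q$ is positive definite transverse to $\nabla u$ (produced from a strictly convex, respectively level-set convex, function adapted to $g$), and then balances a large $N$ against a small but fixed $\varepsilon>0$; this reinstates a positive-definite leading term in all directions away from $Z$, landing again in $\Gamma_n$, while the near-$Z$ analysis is unchanged. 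Verifying that such an auxiliary $q$ is available under the stated hypotheses, and carrying out the gluing, is the delicate point of the proof.
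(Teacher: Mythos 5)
Your decomposition — near the critical set $\mathcal C(v)=\{\nabla v=0\}$ use the constraint defining $\mathcal A(\bar M,[g],\Gamma)$, away from it use a full‑rank positive quadratic term coming from the conformal change, and glue via compactness and the monotonicity $\Gamma+\Gamma_n\subset\Gamma$ — is exactly the right skeleton, and your argument with $\underline u=Nu$ does close for $n\ge 4$. But you have correctly diagnosed, and not resolved, the genuine gap at $n=3$: with the linear rescaling the only quadratic term is $N^2\,du\otimes du$, which is rank one, and you end by saying that producing a globally defined auxiliary $q$ with $\Delta q\,g-\nabla^2 q$ positive transverse to $\nabla u$, compatible with the gluing, ``is the delicate point of the proof.'' Since a strictly convex function need not exist globally on a compact Riemannian manifold with boundary, this leaves the $n=3$ case (which is precisely the case Remark~\ref{remark1-crucial} is built around) unproved.

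The paper resolves this with a composition rather than a rescaling: take $v\in\mathcal A(\bar M,[g],\Gamma)$, normalize $v\ge 0$ (replace $v$ by $v-\inf_M v$), and set $\underline u=e^{Av}$. The chain rule then contributes a term that is absent from $\Delta(Nv)\,g-\nabla^2(Nv)$, namely
\begin{equation}
\Delta\underline u\,g-\nabla^2\underline u
= A^2 e^{Av}\bigl(|\nabla v|^2 g - dv\otimes dv\bigr) + A e^{Av}\bigl(\Delta v\,g-\nabla^2 v\bigr),
\nonumber
\end{equation}
and assembling $U[\underline u]$ as in Lemma~\ref{lemma1-main} gives
\begin{equation}
U[\underline u]
= A^2 e^{Av}\Bigl(1+\tfrac{n-3}{2}e^{Av}\Bigr)|\nabla v|^2 g
+ A e^{Av}\bigl(\Delta v\,g-\nabla^2 v\bigr)
+ A^2 e^{Av}\bigl(e^{Av}-1\bigr)\,dv\otimes dv .
\nonumber
\end{equation}
Since $v\ge 0$ and $n\ge 3$, the coefficients $1+\tfrac{n-3}{2}e^{Av}\ge 1$ and $e^{Av}-1\ge 0$ are nonnegative, hence
\begin{equation}
U[\underline u]\;\ge\; A e^{Av}\Bigl\{A|\nabla v|^2 g+\Delta v\,g-\nabla^2 v\Bigr\},
\nonumber
\end{equation}
with a \emph{full‑rank} positive definite leading term $A^2 e^{Av}|\nabla v|^2 g$ in \emph{every} dimension $n\ge 3$, exactly what your linear ansatz lacks when $n=3$. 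From here the argument is the one you wrote: on the compact closure $\overline U$ of a neighbourhood of $\mathcal C(v)$ one has $\lambda(\Delta v\,g-\nabla^2 v)\in\Gamma$ uniformly, while on $\bar M\setminus U$ one has $|\nabla v|^2\ge a_0>0$ so that $U[\underline u]\ge\frac12 a_0 A^2 e^{Av} g$ for $A\gg 1$; in both regions the $\frac1{n-2}G_g$ term is dominated for $A$ large, and one concludes $\lambda(\frac1{n-2}G_{e^{2\underline u}g})\in\Gamma$ on $\bar M$. In short, the missing idea is to replace the linear rescaling $Nu$ by the convex composition $e^{Av}$, whose Laplacian manufactures the full‑rank $|\nabla v|^2 g$ term for free, so no auxiliary strictly convex $q$ is needed.
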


\begin{proof}
For $v\in \mathcal{A}(\bar M,[g],\Gamma)$ (assume $v\geq0$), %otherwise replaced by $v-\inf_M v$), 
we set $ \underline{u}=e^{Av},$ $\tilde{g}=e^{2\underline{u}}g,$
% \begin{equation} \begin{aligned} \,&\underline{u}=e^{Av}, \,& \tilde{g}=e^{2\underline{u}}g, \nonumber\end{aligned}\end{equation}
then \begin{equation}
\begin{aligned}
U[\underline{u}]=\,&A^2e^{Av}(1+\frac{n-3}{2}e^{Av})|\nabla v|^2 g
+Ae^{Av}(\Delta v g-\nabla^2v) \\\,&
+A^2e^{Av}(e^{Av}-1)dv\otimes dv \\ \nonumber
\geq \,& Ae^{Av} \left\{ A|\nabla v|^2 g+(\Delta v g-\nabla^2v) \right\}. 
\end{aligned}
\end{equation}

% As above we denote the critical set of $v$ by %$$\mathcal{C}(v):=\{x\in \bar M: \nabla v(x)=0\}.$$
% \begin{equation}\begin{aligned}
% \mathcal{C}(v):=\left\{x\in \bar M: \nabla v(x)=0\right\}. \nonumber
% \end{aligned}\end{equation}
%Obviously,
 %$$\mathcal{C}(v)=\left\{x\in \bar M: dv(x)=0\right\}.$$
 For $x\in \mathcal{C}(v)$, there is $r_x>0$ %($r_x$ is small)  
 such that 
$$\lambda(\Delta v g-\nabla^2 v)\in \Gamma \mbox{ in } \mathrm{V}_x:=\{y\in \bar M: \mathrm{dist}(x,y)<r_x\}.$$
Fix $r_x$ as above. Set $U=\cup_{x\in \mathcal{C}(v)} \{y\in \bar M: \mathrm{dist}(x,y)<\frac{r_x}{2}\}.$ Then $U$ is an open subset. The
subset $\bar M\setminus {U}$ as well as the closure of $U$, denoted by $\overline{U}$, are both compact. 
From above we see $\lambda(\Delta v g-\nabla^2 v)\in \Gamma$ in $\overline{U}$ and there is a uniformly positive constant $a_0$ such that $|\nabla v|^2\geq a_0$ in $\bar M\setminus {U}$.

If $x\in \bar M\setminus U$, then $U[\underline{u}] \geq
\frac{1}{2}a_0A^2e^{Av} g \mbox{ for } A\gg1. $

For $x\in \overline{U}$,  $\lambda(\frac{1}{n-2}G_{\tilde{g}})=\lambda(\frac{1}{n-2}G_g+U[\underline{u}])\in \Gamma$
in $\overline{U}$ when $A\gg1$, since  $U[\underline{u}]\geq Ae^{Av}(\Delta v g-\nabla^2v)$ and 
$\lambda(\Delta v g-\nabla^2 v)\in \Gamma \mbox{ in } \overline{U}$.

%The proof is complete.
\end{proof}
 
 \begin{proof}
[Proof of Theorem \ref{thm1-conformal}]
%It suffices to find admissible conformal metrics.
%Clearly, %$\mathcal{A}(\bar \Omega, g_\Omega,\Gamma)\neq \emptyset$. 
Admissible conformal metrics exist, since there are many smooth functions $v$ on $\bar\Omega$ such that $\nabla  v\neq 0$ everywhere.
%When $\Omega$ is a smooth, bounded domain in $\mathbb{R}^k$, $1\leq k\leq n$. 
%The $v$ can be chosen as a smooth strongly convex function on $\Omega$.  
%According to Proposition \ref{lemma5-main}  admissible conformal metrics exist.
 
\end{proof}

 \begin{remark}
 \label{remark1-nonzero}
 If $\bar M$ admits a $C^1$-gradient field without zero, then $\mathcal{A}(\bar M,[g],\Gamma)\neq \emptyset$ for each conformal class $[g]$ on $\bar M$.
 Furthermore, if a manifold $Y$ supposes a function $v$ with $d v\neq 0$ everywhere, then so does  $X\times Y$.
 % for each smooth manifold $X$.
 \end{remark}

 \section{Conformal deformation of general modified Schouten tensors}
 \label{general-existence}

% In this section,  for %modified Schouten tensors $S_g^\tau$ with 
% $\tau>1+(n-2)(1-\kappa_\Gamma\vartheta_{\Gamma})$, %$ i.e., $\tau$ satisfies \eqref{assumption-tau}, 
%we are going to find conformal metrics $\tilde{g}=e^{2u}g$ to solve \eqref{conformal-equ0-general}.
%  \begin{equation} \label{assumption-tau}\begin{aligned}\tau>1+(n-2)(1-\kappa_\Gamma\vartheta_{\Gamma}).
%\end{aligned}\end{equation}
 We assume throughout this section that
 %$(M,g)$  is a compact Riemannian manifold of dimension $n\geq 3$ with smooth boundary, 
 $\psi\in C^\infty(\bar M)$, $\psi>0 \mbox{ in } \bar M$, and $f$ satisfies %\eqref{elliptic}, 
 \eqref{concave}, \eqref{homogeneous-1},  \eqref{homogeneous-1-buchong2}.

% \subsection{Some computations}

Let $h=h(\mathrm{d})$, then 
 \begin{equation}
 \label{1212121}
\begin{aligned}
\,& \frac{\tau-1}{n-2}\Delta h g -\nabla^2 h +\frac{\tau-2}{2}|\nabla h|^2 g+dh\otimes dh
%\\=\,&  \frac{\tau-1}{n-2}(h'_k\Delta \mathrm{d}+h''_k|\nabla\mathrm{d}|^2)g-(h'_k\nabla^2 \mathrm{d}+h''_kd\mathrm{d}\otimes d\mathrm{d})\\ \,&+\frac{\tau-2}{2}h'^2|\nabla\mathrm{d}|^2g+h'^2 d\mathrm{d}\otimes d\mathrm{d}
\\=\,&
\left(\frac{(\tau-1)h''}{n-2}+\frac{(\tau-2)h'^2}{2}\right) |\nabla\mathrm{d}|^2g
+h' \left(  \frac{\tau-1}{n-2}\Delta \mathrm{d}g-\nabla^2 \mathrm{d} \right)
\\\,&
 +(h'^2-h'') d\mathrm{d}\otimes d\mathrm{d}. % \nonumber
\end{aligned}
\end{equation}
 Recall that under the conformal change $\tilde{g}=e^{2u}g$,  $S^\tau_{\tilde{g}}$ obeys the following formula
 $$S_{\tilde{g}}^\tau=S_g^\tau + \frac{\tau-1}{n-2}\Delta u g-\nabla^2u+\frac{\tau-2}{2}|\nabla u|^2 g+du\otimes du.$$
 %Next, we give some computations. Let %$\beta=1$,
Let  $h_k=  \log\frac{k\delta^2}{k\mathrm{d}+\delta^2}$.  
The straightforward computation gives $h_k''=h_k'^2$ and
\begin{equation}
\begin{aligned}
\,& \frac{\tau-1}{n-2}\Delta h_k g -\nabla^2 h_k +\frac{\tau-2}{2}|\nabla h_k|^2 g+dh_k\otimes dh_k 
%\\=\,&  \frac{\tau-1}{n-2}(h'_k\Delta \mathrm{d}+h''_k|\nabla\mathrm{d}|^2)g
%-(h'_k\nabla^2 \mathrm{d}+h''_kd\mathrm{d}\otimes d\mathrm{d})\\ \,&
%+\frac{\tau-2}{2}h'^2|\nabla\mathrm{d}|^2g+h'^2 d\mathrm{d}\otimes d\mathrm{d}
%\\=\,&
%\left(\frac{(\tau-1)h''_k}{n-2}+\frac{(\tau-2)h_k'^2}{2}\right) |\nabla\mathrm{d}|^2g
%+h'_k \left(  \frac{\tau-1}{n-2}\Delta \mathrm{d}g-\nabla^2 \mathrm{d} \right)
%\\\,& -(h''-h'^2) d\mathrm{d}\otimes d\mathrm{d}
\\ =\,&
%\left(\frac{\tau+1-n}{n-2}+\frac{\tau}{2}\right)
\frac{n\tau+2-2n}{2(n-2)} (h_k')^2 |\nabla\mathrm{d}|^2g
+h'_k \left(  \frac{\tau-1}{n-2}\Delta \mathrm{d}g-\nabla^2 \mathrm{d} \right). \nonumber
\end{aligned}
\end{equation}
It is easy to see $0<\vartheta_{\Gamma}\leq \frac{1}{n}$ and $0\leq\kappa_\Gamma\leq n-1$.
%($\vartheta_{\Gamma}=\frac{1}{n}$ and $\kappa_\Gamma=n-1$ cannot occur %simultaneity 
% simultaneously; otherwise $f_i(\lambda)=\frac{1}{n}\sum_{j=1}^n f_j(\lambda)$ in $\Gamma$, $\forall i$).  
As a result, \eqref{assumption-tau} yields
\begin{equation}
\label{positive-sign1}
\begin{aligned}
%\frac{\tau+1-n}{n-2}+\frac{\tau}{2}=\frac{n\tau+2-2n}{2(n-2)}>0,
n\tau+2-2n>0.
\end{aligned}
\end{equation}
%which verifies the positive sign of coefficient of %quadratic gradient term. 
% $(h_k')^2|\nabla\mathrm{d}|^2 g$.

\begin{remark}
\label{remark-important1}
The importance of assumption \eqref{assumption-tau}:
\begin{enumerate}
\item  It  assures  \eqref{assumption-4} 
(corresponding to $0<\varrho<\frac{1}{1-\kappa_\Gamma\vartheta_{\Gamma}}$ in \eqref{n-1-equation1}).
\item It implies the positive sign of
 coefficient of %quadratic gradient term. 
 $(h_k')^2|\nabla\mathrm{d}|^2 g$ as shown in \eqref{positive-sign1}.
\end{enumerate}
\end{remark}
Following the outline of proof in Section \ref{proofofmainresult}, we can prove existence result asserted 
Theorem \ref{thm0-general}.
%For any complete conformal metric satisfying \eqref{conformal-equ0-general}, % in Theorem \ref{thm0-general}, 
%we have 

For asymptotic property, we construct 
the local subsolution as follows:
 \begin{equation}
%\label{1115-0-2}
\begin{aligned}
h_{k,\epsilon,\delta}(\mathrm{d})=\log\frac{k}{k\mathrm{d}+1}
+\frac{1}{2}\log\frac{(1-\epsilon)^2(n\tau+2-2n)}{2(\sup_{\partial M}\psi+\epsilon)}+
 \frac{1}{\mathrm{d}+\delta}- \frac{1}{\delta}. \nonumber
\end{aligned}
\end{equation}
%where $0<\beta\leq 1$ is determined later. 
Computation shows
 \begin{equation}
\begin{aligned}
\,&\frac{\tau-1}{n-2}h_{k,\epsilon,\delta}''+\frac{\tau-2}{2}h_{k,\epsilon,\delta}'^2
= \frac{n\tau+2-2n}{2(n-2)}\frac{k^2}{(k\mathrm{d}+1)^2}
\\\,&
+ \frac{2(\tau-1)}{(n-2)(\mathrm{d}+\delta)^3}
%\\\,&
+\frac{\tau-2}{2}\left[\frac{1}{(\mathrm{d}+\delta)^4}+\frac{2k}{(\mathrm{d}+\delta)^2(k\mathrm{d}+1)}\right]. \nonumber
\end{aligned}
\end{equation}
Therefore, if $\tau\geq 2$ then $$\frac{\tau-1}{n-2}h_{k,\epsilon,\delta}''+\frac{\tau-2}{2}h_{k,\epsilon,\delta}'^2
\geq  \frac{n\tau+2-2n}{2(n-2)}\frac{k^2}{(k\mathrm{d}+1)^2}.$$
Since $\frac{k}{k\mathrm{d}+1}\geq \frac{1}{\mathrm{d}+\delta}$ if $k\geq\frac{1}{\delta}$, we have
 $h_{k,\epsilon,\delta}'^2-h_{k,\epsilon,\delta}''= \frac{2k }{(\mathrm{d}+\delta)(k\mathrm{d}+1)} \geq 0.$
%\begin{equation} \begin{aligned}
%h_{k,\epsilon,\delta}'^2-h_{k,\epsilon,\delta}''= \frac{2k }{(\mathrm{d}+\delta)(k\mathrm{d}+1)} \geq 0. \nonumber
%\end{aligned}\end{equation}

As in \eqref{buchong1}, for $0<\delta\ll1$ and $k\geq \frac{1}{\delta}$, we derive by \eqref{1212121} that
\begin{equation} \begin{aligned}
f(\lambda(S_{e^{2h_{k,\epsilon,\delta}}g}^\tau)\geq \frac{\psi}{n-2}e^{2h_{k,\epsilon,\delta}} \mbox{ in } \Omega_\delta. \nonumber
\end{aligned}\end{equation}
Thus we obtain the following asymptotic property.
 \begin{proposition}
\label{proposition-asymptotic-general}
%Let $u_\infty$ be the solution obtained in Theorem  \ref{thm0-conformal}.
Let $\tau>1+(n-2)(1-\kappa_\Gamma\vartheta_{\Gamma})$ and $\tau\geq 2$.
Let $\widetilde{g}_\infty=e^{2\widetilde{u}_\infty}g$ be a complete conformal metric satisfying \eqref{conformal-equ0-general}, 
$\lambda_{\widetilde{g}_\infty}(S^\tau_{\widetilde{g}_\infty})\in \Gamma$ in $M$,
% claimed in Theorem  \ref{thm0-general}, 
then 
\begin{equation}
\label{asymptotic-rate2-general}
\begin{aligned}
 \frac{1}{2}\log\frac{n\tau +2-2n}{2 \sup_{\partial M}\psi }\leq\lim_{x\rightarrow\partial M} (\widetilde{u}_\infty(x)+\log \mathrm{d}(x))\leq
  \frac{1}{2}\log\frac{n\tau +2-2n}{2 \inf_{\partial M}\psi }. \nonumber
\end{aligned}
\end{equation}
Furthermore, if $\psi$ is constant when restricted to boundary, i.e. %$\inf_{\partial M}\psi=\sup_{\partial M}\psi=\Lambda$
 $\psi|_{\partial M}\equiv \Lambda$, then
\begin{equation}
\label{asymptotic-rate3-general}
\begin{aligned}
\lim_{x\rightarrow\partial M} (\widetilde{u}_\infty(x)+\log \mathrm{d}(x))=\frac{1}{2}\log\frac{n\tau+2-2n}{2 \Lambda} \nonumber
\end{aligned}
\end{equation} 
and the %(admissible) 
 complete conformal metric  %satisfying  \eqref{conformal-equ0-general}
  is unique.
\end{proposition}

Similarly, we define as in \eqref{A(M)-def} a conformal  invariance
\begin{equation}
\begin{aligned}
\mathcal{A}^\tau(\bar M, [g],\Gamma)=\left\{u\in C^2(\bar M): \lambda_{\tilde{g}}(\frac{\tau-1}{n-2}\Delta_{\tilde{g}} u \tilde{g}-\tilde{\nabla}^2 u) \in \Gamma \mbox{ on }   \mathcal{C}(u) \mbox{ for } \tilde{g}\in [g] \right\}  \nonumber
\end{aligned}
\end{equation}
as above $\mathcal{C}(u)$ is the critical set of $u$. For $\tau\geq2$ satisfying \eqref{assumption-tau} the conformal metric satisfying \eqref{admissible-metric23} exists if
 $\mathcal{A}^\tau(\bar M, [g],\Gamma)\neq\emptyset$.
%Let $v\in \mathcal{A}^\tau(\bar M, [g],\Gamma)$. Assume $v<0$. Let $h=-\epsilon v$,  $\underline{u}_k=  \log\frac{k}{kh+1}$, then 
%\begin{equation}\begin{aligned}
%\,& \frac{\tau-1}{n-2}\Delta \underline{u}_k g -\nabla^2  \underline{u}_k +\frac{\tau-2}{2}|\nabla  \underline{u}_k|^2 g+d \underline{u}_k\otimes d \underline{u}_k \\ =\,&
%\frac{n\tau+2-2n}{2(n-2)} \frac{k^2}{(kh+1)^2} |\nabla h|^2g- \frac{k}{kh+1}\left(  \frac{\tau-1}{n-2}\Delta h g-\nabla^2 h \right). \end{aligned}\end{equation}
%there is an $u\in C^2(\bar M)$ satisfying for any critical point $x_0$ with  $\nabla u(x_0)=0$,
%$\lambda(\frac{\tau-1}{n-2}\Delta u g-\nabla^2 u)(x_0)\in \Gamma$. 
As a consequence, we obtain Theorem \ref{coro73}.
Furthermore, we obtain existence result for conformal metrics with prescribed boundary metric.

  \begin{theorem}
  Suppose %\eqref{elliptic}, 
  \eqref{concave}, \eqref{homogeneous-1}, \eqref{homogeneous-1-buchong2}, \eqref{assumption-tau}, \eqref{admissible-metric23} hold.
 For a Riemannian metric $h$ on $\partial M$ which is conformal to $g|_{\partial M}$, there is a smooth conformal 
 metric $\tilde{g}$, satisfying  \eqref{conformal-equ0-general} and $\lambda_{\tilde{g}}(S^\tau_{\tilde{g}})\in \Gamma \mbox{ in } \bar M$, with prescribed boundary condition $\tilde{g}|_{\partial M}=h$. %Moreover, $\lambda_{\tilde{g}}(S^\tau_{\tilde{g}})\in \Gamma \mbox{ in } \bar M$.
 
 \end{theorem}

% \begin{remark}
%   Theorems \ref{thm0-conformal} and \ref{thm2-conformal} reveal the existence of conformal metrics that are determined by Einstein tensor connects closely with critical points of gradient fields.
 
% \end{remark}
 
  \begin{remark}
  With assumption  \eqref{assumption-tau} holds,  the condition  $\tau\geq 2$ is  satisfied  if either 
  $0\leq\kappa_\Gamma\leq n-3$, or $n=4$ and $\kappa_\Gamma= 2$.
%\begin{enumerate}
%\item $0\leq\kappa_\Gamma\leq n-3$.
%\item $n=4$ and $\kappa_\Gamma= 2$.
%\end{enumerate}
 
 \end{remark}

%$\mathcal{\widetilde{A}}(\bar M)\neq\emptyset$,
% where 
%\begin{equation}\begin{aligned}
%\mathcal{\widetilde{A}}(\bar M):=\left\{u\in C^2(\bar M): \lambda(\frac{\tau-1}{n-2}\Delta u g-\nabla^2 u)\in \Gamma  \mbox{ at $x_0$ where } \nabla u(x_0)=0 \right\}.  \nonumber
%\end{aligned}\end{equation} 

\section{Hessian and Weingarten equations}
\label{section3}

 %\vspace{0.5mm}

In this section we give another application of partial uniform ellipticity. 
 Corollary \ref{prop1-operator},
  % This interesting proposition, 
   couple with  \eqref{sumfi}, %assertion % is a consequence of Theorem \ref{yuan-k+1} and 
deduces that if $f$ satisfies %\eqref{elliptic},
 \eqref{concave} and \eqref{addistruc}, then for any $\lambda\in \partial \Gamma^\sigma$, there is an $\theta>0$ depending on $\sigma$ so that
 \begin{equation}
\label{key2-yuan}
\begin{aligned}
f_i(\lambda)\geq \theta +\theta \sum_{j=1}^n f_j(\lambda)  \mbox{ if } \lambda_i\leq 0.
\end{aligned}
\end{equation}
%in very general context.
Combining with the (interior and global) gradient estimates obtained previously in  \cite{LiYY1991,Urbas2002},
 we briefly discuss Hessian  and Weingarten equations.

\subsection{Hessian equations}

  %fully nonlinear elliptic equations 
Let's consider Hessian equations on compact Riemannian manifolds with or without boundary
 \begin{equation}
\label{hessianequ1-riemann}
\begin{aligned}
f(\lambda(\nabla^2 u+A))=\psi,
\end{aligned}
\end{equation}
 where $\psi\in C^\infty(\bar M)$ %($\bar M=M$ if $\partial M=\emptyset$)
  and
 $A$ is a smooth symmetric $(0,2)$-type tensor.
 
 \vspace{0.5mm}
 % \begin{equation}
%  \label{sum-infty}
%\begin{aligned}
%\lim_{|\lambda|\rightarrow +\infty}\sum_{i=1}^n f_i(\lambda)=+\infty,
%\end{aligned}
%\end{equation}
The gradient estimate is %a key ingredient left open for %derive \textit{a priori estimates} and
 important for the study of Hessian equations.
However, it is rather hard %to derive  gradient estimate 
on curved manifolds.
In \cite{LiYY1990} the gradient estimate was obtained under  assumptions that  $\lim_{|\lambda|\rightarrow +\infty}\sum_{i=1}^n f_i(\lambda)=+\infty$ and  the Riemannian manifold admits nonnegative sectional curvature, later was extended by \cite{Urbas2002} %to Hessian equations 
 with replacing such two restrictions by \eqref{key2-yuan}.
 %that is fulfilled for $f$ satisfying \eqref{elliptic}, \eqref{concave}, \eqref{addistruc}. 
 % As an application of
%Combining Proposition \ref{prop1-operator} and \eqref{sumfi}, 
Thus one can follow \cite{Urbas2002} to 
 obtain the gradient bound for solutions to  \eqref{hessianequ1-riemann}
 with existence of 
  $\mathcal{C}$-subsolution satisfying
 \begin{equation}
\label{existenceofsubsolution2}
\begin{aligned}
\lim_{t\rightarrow +\infty}f(\lambda(\nabla^2\underline{u}+A)+te_i)>\psi, 
\mbox{ in } \bar M \mbox{ for each } i=1,\cdots, n,  \nonumber
\end{aligned}
\end{equation}
where  %=\sqrt{-1}\partial\overline{\partial}\underline{u}+\chi(z,\partial\underline{u},\overline{\partial} \underline{u})$,
  $e_i$ is the $i$-$\mathrm{th}$   standard basis vector  (see \cite{Gabor}).
 %The proof also uses Proposition 5 in \cite{Gabor}. %(analogous to Lemma \ref{guan2014} below).
 %Also one can verify that an \textit{admissible} subsolution satisfying \eqref{subsolution-1} below is indeed a $\mathcal{C}$-subsolution.
\begin{proposition}
 \label{thm1-gradient}
 Let $u$ be an admissible solution $u\in C^3(M)\cap C^1(\bar M)$ to equation \eqref{hessianequ1-riemann} with nondegenerate 
 right-hand side $\inf_M \psi>\sup_{\partial\Gamma}f$.
In addition to \eqref{concave},  \eqref{elliptic} and \eqref{addistruc}, we assume  
  there exists a $C^2$-smooth $\mathcal{C}$-subsolution $\underline{u}$. %to the equation.
 Then 
 \begin{equation}
\begin{aligned}
\sup_{M}|\nabla u|\leq C(1+\sup_{\partial M}|\nabla u|), \nonumber
\end{aligned}
\end{equation}
where $C$ depends on $|\psi|_{C^1(\bar M)}$, $|\underline{u}|_{C^2(\bar M)}$ and other known data. Moreover, $C$ is independent of $(\delta_{\psi,f})^{-1}$, where $\delta_{\psi,f}=\inf_M \psi-\sup_{\partial\Gamma}f$.
 \end{proposition}

%Indeed, the proof % used in this paper also %extends Theorem \ref{thm1-gradient}
%also applies to more general Hessian type equations \eqref{hessianequ2-riemann}
% with certain concave/convex, asymptotic growth conditions on $\nabla u$ %same as that 
% imposed in \cite{Guan2015Jiao}. % (we omit it here).
%furthermore, they also obtain interior second order estimate for equation  \eqref{hessianequ2-riemann}  under certain assumptions (on $A(x,z,p)$ and $\psi(x,z,p)$) including the   MTW  condition on   $A(x,z,p)$    to derive interior regularity for optimal transports  \cite{MaTrudingerWang2005}.
%In next section the MTW condition is removed for certain Hessian type equations.
 
% \vspace{0.5mm}
For Dirichlet problem, with a subsolution assumption
 \begin{equation}
  \label{subsolution-1}
  \begin{aligned} 
  f(\lambda(\nabla^2\underline{u}+A))\geq\psi, \mbox{  }
   \lambda(\nabla^2 \underline{u}+A)\in\Gamma \mbox{ in } \bar M,  \mbox{  } 
 \underline{u}=\varphi \mbox{ on } \partial M, \nonumber
  \end{aligned} 
  \end{equation}
  the gradient estimate was proved in new version of \cite{Guan14} for Hessian equations  
  %of form \eqref{hessianequ1-riemann} 
   with a different method; while if the background space is a closed Riemannian manifold,
Proposition \ref{thm1-gradient}
 was stated in \cite{Gabor} where the proof is based on %the quantitative second  estimate
\begin{equation}
\label{quantitative-second-estimate}
\begin{aligned}
\sup_{M}\Delta u \leq C(1+\sup_{M}|\nabla u|^2)
\end{aligned}
\end{equation}
which extends Hou-Ma-Wu's \cite{HouMaWu2010} second estimate for complex $k$-Hessian equations on closed 
K\"ahler manifolds.
The estimate \eqref{quantitative-second-estimate} was extended by the author \cite{yuan2017} to compact Riemannian manifolds with concave boundary
by establishing a quantitative boundary estimate of the form
\begin{equation}
\label{quantitative-boundary-estimate}
\begin{aligned}
\sup_{\partial M}\Delta u \leq C(1+\sup_{M}|\nabla u|^2).
\end{aligned}
\end{equation}
%(This holds for equations on Hermitian manifolds with real analytic Levi flat/holomorphically flat boundary).
Indeed, by using a strategy further developed   in \cite{yuan2019},
 such a quantitative boundary estimate 
% \eqref{quantitative-boundary-estimate} 
 can be  extended to more general case that the
principal curvatures of $\partial M$, $\kappa_1, \cdots,\kappa_{n-1}$, satisfy
 \begin{equation}
\label{bdry-assum1}
\begin{aligned}
(-\kappa_1,\cdots,-\kappa_{n-1})\in \overline{\Gamma}^\infty_{\mathbb{R}^{n-1}}\mbox{ in } \partial M,
\end{aligned}
\end{equation}
where $\overline{\Gamma}^{\infty}_{\mathbb{R}^{n-1}}$ is the closure of
$\Gamma^{\infty}_{\mathbb{R}^{n-1}}$. 
 (In \cite{yuan2017,yuan2019} the author dealt primarily with %fully nonlinear elliptic equations
 Dirichlet problem on Hermitian manifolds, in which \eqref{quantitative-boundary-estimate} 
 was established for complex fully nonlinear equations. 
 In fact the gradient estimate in complex variables is much more hard to handle). 
 % Also,  please see \cite{yuan2020,yuan2019-3} for follow-up work).
%$\Gamma_{\infty}=\{(\lambda_1,\cdots,\lambda_{n-1}): (\lambda_1,\cdots,\lambda_{n-1},t )\in \Gamma \mbox{ for some } t>0\}.$
%\vspace{0.5mm}
It is a remarkable fact that, besides with proposing a new and effective
 approach to gradient estimate,  the quantitative boundary estimate %obtained in \cite{yuan2017,yuan2019} 
indeed applies to the Dirichlet problem for  equation \eqref{hessianequ1-riemann}  %on Riemannian manifolds
 with degenerate right-hand side $\inf_M\psi=\sup_{\partial\Gamma} f$.
 %  (see Theorem \ref{thm1-solvability-Hessian} below).
  To solve degenerate equations,   the \textit{admissible} subsolution shall be replaced by a strictly \textit{admissible} subsolution with $f(\lambda(\nabla^2\underline{u}+A))>\psi$ in $\bar M$,  $\underline{u}=\varphi$ on $\partial M$.
 % \begin{equation} \begin{aligned}
%\,&  f(\lambda(\nabla^2\underline{u}+A))>\psi \mbox{ in } \bar M,  \,& \underline{u}=\varphi \mbox{ on } \partial M. \nonumber
 % \end{aligned}\end{equation}
  %For more 
  %literature, please see \cite{yuan2020,yuan2019-3}.
%\begin{equation} \label{degenerate1} \begin{aligned}
%\inf_M\psi=\sup_{\partial\Gamma} f.
%\end{aligned}\end{equation}
%and   imposes some new regularity assumptions on boundary and boundary data as well.
%(For degenerate equations we shall assume in addition that $f\in C^\infty(\Gamma)\cap C(\overline{\Gamma})$, where $\overline{\Gamma}$ denotes the closure of $\Gamma$).

\vspace{0.5mm}
  As a result, %combining with \cite{Urbas2002,Guan12a} and \cite{yuan2019} (degenerate),
   combining with the results in \cite{Urbas2002} and \cite{Guan12a} for nondegenerate equations, one can solve the Dirichlet problem. 
   In addition, for degenerate equations, the arguments of \cite{yuan2019} is also needed.
   % to prove 
   %\eqref{quantitative-boundary-estimate} %a quantitative boundary estimate
   % that is independent of $(\delta_{\psi,f})^{-1}$.
      % with assuming %the existence of
  %  \textit{admissible} subsolutions. 
    %For nondegenerate equations,  we remove additional assumptions on nonnegative sectional curvature (see \cite{LiYY1990}) and concave boundary (see  \cite{yuan2017}). 

 \begin{theorem}
 \label{thm1-solvability-Hessian}
Let $(M,g)$ be a Riemannian manifold with smooth boundary.
Let $\psi$, $\varphi$ be smooth functions. %$A$ be a smooth symmetric $(0,2)$-type tensor.
In addition to \eqref{concave}, \eqref{elliptic} and \eqref{addistruc}, we assume %that 
$\inf_M \psi>\sup_{\partial\Gamma}f$ and there is a $C^{3,1}$-smooth admissible subsolution $\underline{u}$.
% satisfying \eqref{subsolution-1}.
Then there is a unique smooth admissible function $u$ with $u|_{\partial M}=\varphi$ to solve
%the Hessian
% equation 
\eqref{hessianequ1-riemann}.

% \begin{remark} \label{remark1-degenerate}
Furthermore, if  $\partial M$ satisfies \eqref{bdry-assum1} and $\underline{u}$ 
 is a strict subsolution, then the equation with degenerate right-hand side  %$\inf_M\psi=\sup_{\partial\Gamma} f$  
 admits a weak solution $u$ with $u|_{\partial M}=\varphi$, $u\in C^{1,1}(\bar M)$, 
 $\Delta u\in L^\infty(\bar M)$ and $\lambda(\nabla^2u+A)\in \overline{\Gamma}$.
% \end{remark}

 \end{theorem}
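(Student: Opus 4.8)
The plan is to treat the nondegenerate case by the method of continuity and then to obtain the degenerate case by approximation. For the nondegenerate problem, set $\underline{\psi}=f(\lambda(\nabla^2\underline{u}+A))$ (so $\underline{\psi}\geq\psi$ by the subsolution hypothesis) and, for $t\in[0,1]$, consider the Dirichlet problems $f(\lambda(\nabla^2 u^t+A))=\psi_t$ in $M$, $u^t=\varphi$ on $\partial M$, with $\psi_t=t\psi+(1-t)\underline{\psi}$. Then $\psi_t\geq\psi>\sup_{\partial\Gamma}f$ for every $t$, so each problem is nondegenerate, $\underline{u}$ remains an admissible subsolution for all $t$, and $u^0=\underline{u}$ solves the $t=0$ problem. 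The set of $t$ for which \eqref{hessianequ1-riemann} with right-hand side $\psi_t$ is classically solvable is open: by \eqref{elliptic} the linearized operator $\mathcal{L}_t$ is uniformly elliptic on admissible solutions, with zeroth-order term of favorable sign after the usual change of unknown, so it is invertible between the relevant Hölder spaces by Fredholm theory and the maximum principle. Closedness is reduced to a priori bounds in $C^{2,\alpha}(\bar M)$ that are uniform in $t$.

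The estimates follow the standard hierarchy. The $C^0$ bound comes from comparison with $\underline{u}$ from below (using $\underline{\psi}\geq\psi_t$ and ellipticity) together with an upper barrier produced from $\Gamma\subset\Gamma_1$, which forces $\mathrm{tr}_g(\nabla^2 u^t+A)>0$. For the gradient estimate I would invoke Theorem \ref{thm1-gradient}: its interior part rests on Proposition \ref{prop1-operator} and \eqref{sumfi}, while the boundary gradient bound is obtained from $\underline{u}$ and a concave perturbation used as lower and upper barriers near $\partial M$, as in \cite{Urbas2002,Guan12a} (alternatively one may quote the Dirichlet gradient estimate of \cite{Guan14}). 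With $|u^t|_{C^1(\bar M)}$ under control, the global second-derivative bound reduces to $\partial M$ by the maximum-principle computation behind Theorem \ref{interior-2nd-2} (with $\eta\equiv1$), and the boundary second-derivative estimate is supplied by Theorem \ref{thm2-bdy}: the tangential and mixed parts use the boundary data and the known first-order and lower-order bounds, and the double-normal part uses the subsolution through the concavity inequality of \eqref{concave}. Evans--Krylov then gives a uniform $C^{2,\alpha}(\bar M)$ bound and Schauder theory yields $C^\infty$; uniqueness follows from \eqref{elliptic} and \eqref{concave} by the maximum principle applied to the difference of two solutions. This proves the first assertion.

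For the degenerate case $\inf_M\psi=\sup_{\partial\Gamma}f$, I would apply the first part to the nondegenerate right-hand sides $\psi+\varepsilon$ with $\varepsilon\downarrow 0$, using that the \emph{strict} subsolution $\underline{u}$ satisfies $f(\lambda(\nabla^2\underline{u}+A))\geq\psi+\varepsilon$ for all small $\varepsilon>0$. The decisive point is to bound $|u_\varepsilon|_{C^0(\bar M)}$, $|\nabla u_\varepsilon|_{C^0(\bar M)}$, and $\sup_M\Delta u_\varepsilon$ independently of $\varepsilon$: the $C^0$ bound and Theorem \ref{thm1-gradient} are already $\varepsilon$-free, and the $\Delta u_\varepsilon$ bound follows from the quantitative second estimate \eqref{quantitative-second-estimate} together with the quantitative boundary estimate \eqref{quantitative-boundary-estimate}, whose validity under the sharp curvature condition \eqref{bdry-assum1} on the principal curvatures of $\partial M$ is exactly the input provided there (this is where $\Gamma^\infty_{\mathbb{R}^{n-1}}$ enters). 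A uniform $C^{1,1}(\bar M)$ bound then extracts, along a subsequence, a limit $u\in C^{1,1}(\bar M)$ with $\Delta u\in L^\infty(\bar M)$, $\lambda(\nabla^2 u+A)\in\overline{\Gamma}$, $u|_{\partial M}=\varphi$, and $f(\lambda(\nabla^2 u+A))=\psi$ a.e., i.e. the desired weak solution. The main obstacle throughout is the gradient estimate on a curved background with no sign hypothesis on curvature: it is precisely \eqref{key1-yuan}, furnished by Proposition \ref{prop1-operator} through the partial uniform ellipticity, that makes the Urbas-type argument go through; the second delicate point is securing \eqref{quantitative-boundary-estimate} under the sharp condition \eqref{bdry-assum1}, which is what renders the degenerate problem tractable.
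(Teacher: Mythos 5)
Your overall architecture (continuity method for the nondegenerate case, approximation $\psi+\varepsilon$ for the degenerate one, Proposition \ref{prop1-operator} feeding into the Urbas-type gradient estimate, and the quantitative estimates \eqref{quantitative-second-estimate}--\eqref{quantitative-boundary-estimate} under \eqref{bdry-assum1} for the degenerate regularity) does match what the paper sketches. However, there is a genuine gap in the second-derivative step. You invoke Theorem \ref{interior-2nd-2} and Theorem \ref{thm2-bdy} to get $|\nabla^2 u^t|_{C^0(\bar M)}$, but both of those theorems are proved \emph{under the fully uniform ellipticity hypothesis} \eqref{fully-uniform2}, i.e.\ $f_i(\lambda)\geq\theta\sum_j f_j(\lambda)$ for \emph{all} $i$. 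Theorem \ref{thm1-solvability-Hessian} assumes only \eqref{elliptic}, \eqref{concave}, \eqref{addistruc}; by Theorem \ref{yuan-k+1} and Theorem \ref{thm-sharp} the function $f$ is exactly $(\kappa_\Gamma+1)$-uniformly elliptic, and full uniform ellipticity holds only when $\Gamma$ is of type $2$ (Corollary \ref{thm-type2}). Thus for general $\Gamma$ (say $\Gamma=\Gamma_k$, $2\le k\le n$, where $\kappa_\Gamma=n-k$) the internal Theorems \ref{interior-2nd-2} and \ref{thm2-bdy} simply do not apply, and the reduction-to-boundary and the boundary double-normal argument there --- both of which lean crucially on $\{F^{ij}\}\geq\theta(\sum_k f_k)\delta_{ij}$ --- break down.

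The paper's intended route for the second-derivative bounds is to use the estimates of \cite{Guan12a}, which do not require any partial/full uniform ellipticity but instead exploit the admissible subsolution $\underline{u}$; the role of this paper's partial uniform ellipticity is confined to the gradient estimate (via Proposition \ref{prop1-operator} and \eqref{sumfi} making the Urbas-type argument, or Theorem \ref{thm1-gradient}, available). For the degenerate case the $C^{1,1}$ bound comes from the quantitative estimates of \cite{yuan2017,yuan2019}, as you correctly note. So: replace your references to Theorems \ref{interior-2nd-2} and \ref{thm2-bdy} with the second-order estimates of \cite{Guan12a} under the subsolution hypothesis, and the argument closes; as written it does not.
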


\subsection{Weingarten equation}
Let $S_u:=\{(x',x_{n+1})\subset\mathbb{R}^{n+1}:   x_{n+1}=u(x'),   x'=(x_1,\cdots,x_n)\}$ be the graph of $u$,
 $\nu=(\nu_1,\cdots,\nu_{n+1})$ be the downward normal vector of $S_u$. 
 We denote $\kappa=(\kappa_1,\cdots,\kappa_n)$ the vector of 
principal curvatures of the graph $S_u$. 
The Weingarten equation is given by 
\begin{equation}
\label{weingartenequ1}
\begin{aligned}
f(\kappa)=\psi(x,u,\nu)
\end{aligned}
\end{equation}
where  $\psi$ is in  class of $C^1$
and satisfies %that %there are
for some positive constants $A_0$, $A_1$, $A_2$, %such that 
\begin{equation}
\label{weingartenequ-condition1}
\begin{aligned}
\psi_u\geq 0, \mbox{  } |\psi|\leq A_0, \mbox{  }  |\nabla\psi|\leq A_1, \mbox{  }  \psi\geq  A_2>\sup_{\partial\Gamma}f.
\end{aligned}
\end{equation}
%In addition, $f$ satisfies 

Interior gradient estimate for admissible solutions of Weingarten equations
 with $f=(\sigma_k)^{\frac{1}{k}}$  has been obtained by Korevaar \cite{Korevaar1987},
while it was generalized by Li \cite{LiYY1991} to more general Weingarten equations satisfying \eqref{key2-yuan} and
\begin{equation}
\label{weingartenequ-condition2}
\begin{aligned}
 \liminf_{\lambda\rightarrow0}f(\lambda)>-\infty.
\end{aligned}
\end{equation}
More results concerning Weingarten equations with assumption \eqref{key2-yuan} can be found \cite{ShengUrbasWang-Duke,Trudinger90}.

Since we confirm \eqref{key2-yuan} in general context, 
%when $f$ satisfies \eqref{elliptic}, \eqref{concave} and \eqref{addistruc},
% according to Corollary  \ref{prop1-operator},
%Together with Corollary  \ref{prop1-operator}, 
%Therefore, 
Li's interior estimate gives 
\begin{proposition}
Suppose \eqref{concave}, \eqref{elliptic}, \eqref{addistruc}, \eqref{weingartenequ-condition1} and \eqref{weingartenequ-condition2} hold. Let $\overline{B_1(0)}=\{x'\in \mathbb{R}^n: |x'|\leq 1\}$, and
$u\in C^3(\overline{B_1(0)})$ be a  solution to \eqref{weingartenequ1} with 
  $\kappa\in \Gamma$ in  graph $S_u$. In addition $u|_{\overline{B_1(0)}}<0$, $u(0)=-A_3$. Then
there is a positive constant $C$ depending only on $f$, $n$, $A_0$, $A_1$, $A_2$ and $A_3$,  such that
$|\nabla u(0)|\leq C.$
\end{proposition}

\bigskip

%\begin{appendix}
%\section{A characterization of cones being of type 2}
%\end{appendix}

%\bigskip

%{\bf Acknowledgement.}

%\subsection*{Acknowledgement}
% The author was supported by %the National Natural Science Foundation of China, Grant No.
% NSFC Grant 11801587.

\small
\bibliographystyle{plain}

\end{document}